\documentclass[11pt]{amsart}

\usepackage{geometry} % page geometry
\usepackage{fullpage} % smaller margins
\usepackage{fontspec} % displays special symbols
\usepackage[main=english, russian]{babel} % language related package
\usepackage{float} % better figures
\usepackage{csquotes} % quotations
\usepackage{wrapfig} % wraps text around small figures
\usepackage{datetime} % dates

\usepackage{subcaption} % subfigures and other subcaptions
\usepackage{multicol} % multiple columns of text
\usepackage[bottom]{footmisc} % footnotes at bottom of page
\usepackage{todonotes} % to do list
\usepackage{hyperref} % clickable links

\usepackage{tikz}
\usetikzlibrary{patterns,arrows}
\tikzset{shorten <>/.style={shorten >=#1, shorten <=#1}}

\usepackage{enumitem} % better enumerate
\setlist[enumerate]{nosep, label=(\roman*), leftmargin=*}

\usepackage[backend=biber, giveninits=true]{biblatex} % bibliography
\usepackage{graphicx} % images
\graphicspath{{images/}}

\usepackage{color} % for adding comments

\usepackage{amsmath, amssymb, amsfonts}
\usepackage{mathtools, stmaryrd}
\usepackage{tikz-cd}
\usepackage{mathUtil}

\usepackage{amsthm}

\renewcommand{\Box}{\boxempty}

\newcommand{\domm}{\mathrm{dom}}

\newcommand{\til}{\mathcal{T}_{\tau}}

\newcommand{\W}{\mathcal{W}}

\newcommand{\ML}{\mathsf{ML}}
\newcommand{\Ho}{\mathsf{H}}
\newcommand{\V}{\mathsf{V}}
\newcommand{\R}{\mathsf{R}}
\newcommand{\So}{\mathsf{S}}
\newcommand{\T}{\mathsf{T}}

\newcommand{\sko}{\mathsf{Skvo}}
\newcommand{\eff}{\mathcal{E}{\mathit{ff}}}
\newcommand{\up}{\mathsf{Up}}
\newcommand{\ipc}{\mathsf{IPC}}

\newcommand{\disji}{%
  \mathrel{\raisebox{0.3pt}{%
    \rotatebox[origin=c]{-90}{\scalebox{0.75}{$\geq$}}}}} % for inquisitive disjunction
\newcommand{\vvvee}{\disji} % for inquisitive disjunction

\makeatletter
\def\l@subsection{\@tocline{2}{0pt}{2.5pc}{5pc}{}}
\def\l@subsubsection{\@tocline{3}{0pt}{5pc}{7.5pc}{}}
\makeatother

\title{Medvedev logic is undecidable}

\author{Rodrigo Nicolau Almeida}
\author{Søren Brinck Knudstorp}

\keywords{intuitionistic logic, Heyting algebras, undecidability, Medvedev, tiling}

\date{}

\begin{document}

\begin{abstract}
We show that Medvedev's logic of finite problems, a well-known superintuitionistic logic, is undecidable. The key method is a reduction from the periodic tiling problem to non-theoremhood in Medvedev's logic. This settles a longstanding open problem. Using similar techniques, but reducing instead to the ordinary tiling problem, we likewise obtain undecidability of Skvortsov's logic of infinite problems, and the fact that the two logics are distinct -- in fact, they are separated by any aperiodic tiling of the plane. Due to the fact that Medvedev's logic figures in so many different areas, these results have implications for several fields -- for example, the study of schematic fragments of logics such as propositional dependence logic, or the study of internal logics of toposes. 

The core idea and technical work of the undecidability proof were obtained using ChatGPT Sol 5.6, and formally verified in Lean by Claude Opus 5. A detailed methodology section outlines how such results were obtained.
\end{abstract}

\maketitle
\tableofcontents

\section{Introduction}

It is a longstanding open problem in the study of superintuitionistic logics whether Yu Medvedev's logic of finite problems $\ML$ (referred to throughout simply as \emph{Medvedev's logic}), is recursively axiomatizable. In this note, we  settle this problem in the negative. Several adjacent questions, such as the undecidability of Skvortsov's logic $\sko$, are also proved.

We prove undecidability of $\ML$ (respectively, $\sko$) by reduction from tiling problems. These were first introduced by Wang~\cite{Wang1961,Wang1963}, with a milestone undecidability result proven by Berger~\cite{Berger1966-BERTUO-6}. Such problems are well-known tools in proving complexity and computability results in mathematical logic, and have recently been employed by the second-named author to prove a number of results (see \cite{knudstorpundecidabilityrelevant,Knudstorp2024, Knudstorp2025,bunchedimplicationsundecidable}). Nevertheless, due to the semantic tameness of $\ML$ and $\sko$, it appeared to many researchers in the field that such methods could not be carried out for proving undecidability of these logics.

The key tool, allowing the reduction to be carried out, comes in the form of M. Zakharyaschev's \emph{canonical formulas} (originally introduced for $\mathsf{K4}$ \cite{zacharyaschevcanonical}, see also \cite[Ch.~8]{Chagrov1997-cr}), especially in their algebraic/dual reformulation by Bezhanishvili and Bezhanishvili \cite{BEZHANISHVILI2009}. They provide a generalization of Yankov-De Jongh formulas and subframe formulas (see \cite{Bezhanishvili2022jankovformulas} and \cite{ilinphdthesis} for extended discussions), and their interest is twofold: (1) they axiomatize every superintuitionistic logic \cite[Thm.~9.4]{Chagrov1997-cr}; (2) they are defined semantically in terms of transformations of frames, allowing for geometric interpretations of refutation. Such formulas have been employed by Zakharyaschev and his collaborators (see e.g. \cite{zacharyaschevcanonical,ZAKHARYASCHEV1993,Zakharyaschev1994} as well as many of the results reported in \cite[Ch.~8, Ch.~10]{Chagrov1997-cr} amongst other chapters), to provide a number of axiomatization and decidability results in modal and superintuitionistic logics. One crucial feature -- to which we will return below -- is that these formulas are characterized by a \emph{fine control over the disjunctive} logical fragment.

Given the interest of this logic to a wide variety of mathematical logical audiences, we have provided an extended introduction and discussion of the paper in the following way: in Section \ref{sec: brief history} we recap the basic definitions, trace the appearances of Medvedev's logic across several different areas, and recap the state of the art on the subject. In Section \ref{sec: main results and consequences} we outline several key consequences of the main results of the paper, explaining in depth how these results imply the answer to several problems in the literature. The remaining sections are organized as follows. In Section \ref{sec: preliminaries}, we recall the basic definitions and techniques employed in the undecidability results. In Section \ref{sec: tiling posets}, we define the tiling posets and their associated tiling formulas, and discuss their geometric interpretation. In Section \ref{sec: undecidability of med}, we prove the key undecidability result of $\ML$, showing the soundness and completeness of the reduction of tiling to non-validity. In Section \ref{sec: undecidability skortsov} we discuss how a small change in the reduction can equally prove that the logic $\sko$ is undecidable. In Section \ref{sec: discussions and open problems}, we highlight several problems left open or outstanding, and some discussion on future directions in the general study of decidability for these logics. In Section \ref{sec: methodology and context}, we discuss the methodology employed in obtaining the present results, detail our usage of large language models, and present our preliminary understanding of the mathematical academic responsibility of caretaking for these results\footnote{As shown in the publicly available documentation (see Section \ref{sec: methodology and context}), the authors prompted and obtained the undecidability result for $\ML$ on the 4th of September 2026 from ChatGPT 5.6, and communicated this to close colleagues and collaborators. In the subsequent week, they set out to write the current paper, and polish it to a minimum degree, with a focus on the survey, the consequences of this work, and the methodological and ethical implications. In the time since, the preprint \cite{pawlowskimedvedevlogic} announced the undecidability of $\ML$, containing an independent report of the same result. Since that paper has a somewhat narrower focus, using some non-standard terminology and notation, we have opted to still make the present work publicly available, in hopes it might be useful to readers interested in this problem. We apologize for the presently sub-optimal notation and narrative, which we hope to improve in future revisions.}.

\subsection{Brief history of Medvedev logic}\label{sec: brief history}

The original motivation of Yu Medvedev \cite{medvedevfiniteproblems}, which justifies the name the \emph{logic of finite problems}, was an attempt to formalize A. Kolmogorov's \cite{Kolmogoroff1932} interpretation of intuitionistic logic as a calculus of problems. In the series of papers \cite{medvedevfiniteproblems,medvedevinterpretationoflogicalformulasbymeans,medvedevinterpretation}, Medvedev presented the connections between the finite-problem interpretation and the intuitionistic propositional calculus, providing some basic results. Such a formulation, and such investigations, also spanned a further line of research, in the recursion theoretic study of ``mass problems", leading to the known \emph{Medvedev lattice} \cite{Skvortsova1988,sorbimedvedevlattice,Terwijn2008-zp}. Despite its great interest, these approaches pull in a different direction, so we will not discuss them in depth.

In its modern Kripke-based formulation, Medvedev's logic can be defined as:
\begin{equation*}
    \ML\coloneqq \mathrm{Log}(\{(\mathcal{P}([n])\setminus \{\varnothing\}, \supseteq) \mid n\in \omega\}),
\end{equation*}

that is, the set of intuitionistic validities of the finite Kripke frames $M_{n}\coloneqq (\mathcal{P}([n])\setminus \{\varnothing\}, \supseteq)$, called \emph{topless Boolean algebras}, or \emph{Medvedev posets}.

By the above definition, $\ML$ has the finite model property, a property which is usually desirable and leads to amenable metalogical properties; for example, it follows immediately that (the validity problem for) $\ML$ is co-recursively enumerable. However, it was realized from early on that, unlike other intuitionistic sets of validities, Medvedev's logic presented a lot of unique features, and some unique challenges: 
\begin{enumerate}
    \item It was shown by Levin \cite{levinsyntactic} that $\ML$ is a maximal extension of $\ipc$ with the disjunction property.\footnote{We recall (see \cite[Ch.~1/2]{Chagrov1997-cr}) that a superintuitionistic logic $L$ is said to have the \emph{disjunction property} if whenever $\phi\vee\psi\in L$ then either $\phi\in L$ or $\psi\in L$. It is \emph{structurally complete} if every rule admissible over a Hilbert calculus of $L$ is derivable.}
    \item It was shown by Prucnal that $\ML$ is likewise structurally complete \cite{prucnalstructuralcompleteness} (see also \cite{skortsovprucnal}), thereby solving a problem by Harvey Friedman \cite{prucnalpaper}.\footnote{We refer the reader to the recent note of Prenosil \cite{prenosilonlevinandprucnal} for a clear and modern presentation of the results of Levin and Prucnal.} 
    \item In parallel, a landmark work of Maksimova, Skvortsov and Shehtman \cite{maksimovaskortov} showed that $\ML$ is not finitely axiomatizable.
    \item It was shown by Szatkowski~\cite{szatowskimedvedev}, in a vein similar to the original results of Medvedev, that several syntactic fragments of $\ML$ coincide with the corresponding fragment of $\ipc$ -- for instance, their disjunction-free fragments.
\end{enumerate}
\medskip
These early results led to one basic outstanding problem:

\begin{problem}[Decidability of $\ML$]\label{prob:recursive axiomatizability}
    Is $\ML$ recursively axiomatizable? Equivalently, is $\ML$ decidable? 
\end{problem}

The status of Problem \ref{prob:recursive axiomatizability} as a venerable open problem is well-established in the literature, witness \cite[Prob.~16.3]{Chagrov1997-cr} and \cite{Shehtman1990} or for something more recent~\cite[pp.~692]{shapirovskysaveliev}. Note that the existence of undecidable superintuitionistic logics was known as early as the late 70s \cite{shehtmanundecidable}, and hence the possibility of a negative solution was well present in the mind of the authors; nevertheless, a lot of effort in the literature seem to have been devoted to proving $\ML$ decidable.

The difficulty of proceeding further in the analysis of $\ML$, led to several related questions. One line of analysis was initiated by Dmitry Skvortsov, who introduced a closely connected logic, which we deem \emph{Skvortsov's logic}:
\begin{equation*}
    \sko\coloneqq \mathrm{Log}(\{(\mathcal{P}(I)\setminus \{\varnothing\}, \supseteq) \mid I \text{ is a set}\}).
\end{equation*}
Going forward, we use the shorthand $M_{I}\coloneqq (\mathcal{P}(I)\setminus \{\varnothing\}, \supseteq)$. 

As shown in \cite{skvortsov1979logic}, $\sko$ is recursively axiomatizable, and it follows easily from the definitions that $\sko\subseteq \ML$. Two related problems arise:

\begin{problem}[Coincidence of $\ML$ and $\sko$]\label{prob: coincidence problem}
    Does the inclusion $\ML\subseteq\sko$ hold?
\end{problem}

\begin{problem}[Decidability of $\sko$]\label{prob: skortsov logic decidability}
    Does the logic $\sko$ have the finite model property? Is it decidable?
\end{problem}
Note that a positive answer to the former would imply positive answers to the latter, as well as to Problem~\ref{prob:recursive axiomatizability}.

Further variations on the same idea were studied in \cite{Shehtman1986}, where several approximations were given to separating the two logics; in particular it was shown that formulas in one variable cannot separate the two logics, and that a large class of ``logics of information type" are likewise non-finitely axiomatizable.\footnote{Note, however, that as consequence relations $\sko$ and $\ML$ are known to differ, as the former is compact \cite{skvortsov1979logic} whilst the latter is not \cite[Prop.~3]{Chen2025}.} Also in the same paper, several topological models were noted as being naturally related to these logics.

In a similar vein, in \cite{Shehtman1990} a modal counterpart, $\ML_{\Box}$, of Medvedev's intermediate logic was introduced, defined on the same class of frames but in the basic modal language (and likewise there's $\sko_{\Box}$). $\ML_{\Box}$ extends Grzegorczyk's logic, $\mathsf{Grz}$, and is connected to $\ML$ by the celebrated Gödel--McKinsey--Tarski translation \cite{Godel1932-GDEZIA,McKinsey1944}. The translation, $\sigma\colon \mathcal{L}_{\mathsf{IPC}}\to \mathcal{L}_{\mathsf{S4}}$, goes from intuitionistic to classical modal logic and picks for each superintuitionistic logic a family of \emph{modal companions} which all share the same translated intuitionistic fragment.\footnote{For a detailed treatment of modal companions of superintuitionistic logics, see the survey \cite{Chagrov1992}. For recent developments in the theoretical understanding of these topics, see also \cite{BEZHANISHVILI2025}.} In particular, the greatest modal companion is denoted by
\begin{equation*}
    \sigma(L)\coloneqq \mathsf{Grz}\oplus \{\sigma(\phi) \mid \phi\in L\}.
\end{equation*}
The correctness of the translation, amounts then, to the following theorem
\begin{equation*}
    \phi\in L \iff \sigma (\phi)\in \sigma(L),
\end{equation*}
obtained in this form by Esakia \cite{esakiapaperatconference}. Using this theory, Shehtman's key results \cite{Shehtman1986} showed that $\ML_{\Box}$ was also not finitely axiomatizable, and provided a tighter bridge between the two logics. Moreover, this work proposed a concrete avenue for establishing (un)decidability of the logic $\ML$: what Shehtman deemed $\chi$-completeness, that is, the question of axiomatizability of $\ML$ by Yankov formulas\footnote{Following the convention from \cite{yankovoutstanding}, we transliterate the cyrillic to `Yankov', though `Jankov' is also common.}. Thus, the following were left there as open problems:

\begin{problem}[Decidability of $\ML_{\Box}$]\label{prob: decidability of modal medvedev}
    Is the logic $\ML_{\Box}$ recursively axiomatizable? Equivalently, is $\ML_{\Box}$ decidable?
\end{problem}

\begin{problem}[Decidability of $\sko_\Box$]\label{prob: modal skortsov logic decidability}
    Is $\sko_\Box$ decidable?
\end{problem}

\begin{problem}[Yankov axiomatizability of $\ML$ and $\ML_{\Box}$]\label{prob: yankov axiomatizability}
    Are the logics $\ML$ or $\ML_{\Box}$ axiomatizable by Yankov formulas?
\end{problem}
(Note that, contrary to the intuitionistic language, the modal language can express Grzegorczyk's formula, and hence $\sko_\Box$ lacks the FMP and the inclusion $\sko_\Box\subsetneq \ML_\Box$ is strict.)

Several more recent results are on (classes of) logics close to $\ML$. In \cite{shapirovskysaveliev} an approach in terms of products of ordinals was given; in \cite{Grilletti2022} it was shown that $\ML$ coincides with the logic of topless finite distributive lattics, i.e. $\mathrm{Log}(\{D\setminus\top \mid D\text{ is a finite distributive lattice} \})$; in~\cite{Knudstorp2025} the second-named author proves undecidability for a conservative extension of~$\sko_\Box$;
very recently, the non-finite-axiomatizability results of \cite{maksimovaskortov} were extended to a wide class of logics \cite{xiaogeneralizedmedvedev}; and in a different direction, logics of the individual Medvedev frames $M_{n}$, and related ``Strong Union Logics", were also studied \cite{Chen2025,Chen2026}, obtaining structural completeness results.

In advancing a variety of approaches to the study of $\ML$, these results continued to orbit around the central questions of decidability and axiomatizability laid above. At the same time though, in parallel with these analyses, the logic $\ML$ was found to make appearances in several other, seemingly unexpected, areas of mathematical logic. We record these here, to the best of our knowledge:
\begin{enumerate}
    \item In a study of topological semantics of modal logic, Bezhanishvili, Gehrke and Van Benthem \cite{vanBenthem2003} introduced the ``logic of chequered subsets of the reals", $\mathsf{Cheq}$, which was observed in \cite{litakcheq} to be a sublogic of $\ML$. Subsequent research revealed that $\mathsf{Cheq}$ had, like $\mathsf{ML}$, quite an intricate structure \cite{fontainecheq,inproceedings}. This line of research was also extended by Xiao \cite{xiaomedvedev}, who connected $\ML$ with the modal logic of forcing \cite{Hamkins2007}.
    \item In \cite{Ciardelli2011-CIAIL}, a key connection was established between \emph{inquisitive logic} and $\ML$, showing that Medvedev's logic is the \emph{schematic fragment} of inquisitive logic \cite[Thm.~4.4]{Ciardelli2011-CIAIL}. We recall that given a superinquisitive logic (or more generally, a set of formulas closed under Modus Ponens but not necessarily uniform substitution), this is defined as
    \begin{equation*}
        \mathsf{Sch}(L)=\{\phi\in \mathcal{L}_{\mathsf{IPC}} \mid \text{for all substitutions } \sigma\text{, } \ \sigma(\phi)\in L\}.
    \end{equation*}
    
    Related results, without the framing of inquisitive logic, were independently obtained by Ferrari and Miglioli \cite{ferrarimiglioli}. This connection was reaffirmed in the work of Holliday \cite{hollidaymedvedev}, and later algebraic work on inquisitive logic \cite{Bezhanishvili2019,BEZHANISHVILI2021}. 
    \item In \cite{bevilacqua2025medvedev} (see also \cite{jerabekinternal}) it was observed that $\ML$ appears also in the context of \emph{categorical logic}. To clarify this connection, let us recall the Mitchell-Bénabou language of elementary toposes (see \cite[Ch.~VI.5]{MacLane1994}); importantly for us, one defines an interpretation of intuitionistic formulas $\phi(p_{1},\dots,p_{n})$ in an elementary topos $\mathcal{E}$, using the subobject classifier $\Omega$. Formulas are interpreted as maps $\tru{\phi}\colon \Omega^{n}\to \Omega$, where variables are projections $\pi_{i}\colon \Omega^{n}\to \Omega$, and  the operations are those of $\Omega$ as an internal Heyting algebra. Such a map is true when it evaluates to the `truth map' $t\colon 1\to \Omega$, which is part of the structure of a subobject classifier, and in this case, we write $\mathcal{E}\vDash \phi$. The \emph{internal logic} is then given as
    \begin{equation*}
        \mathrm{Log}(\mathcal{E}) = \{\phi\in \mathcal{L}_{\mathsf{IPC}} \mid \mathcal{E}\vDash \phi\}.
    \end{equation*}
    The key result observed in the above works, is that $\mathrm{Log}(\mathbf{sSet})=\ML$, where $\mathbf{sSet}$ is the topos of simplicial sets.
\end{enumerate}

\vspace{3mm}

Out of this large body of work, several related problems have arisen. We will return to some of these problems, which bear close connection to the work of this paper, in Section \ref{sec: discussions and open problems}.

\subsection{Main results and consequences}\label{sec: main results and consequences}

The main contribution of the present paper is to provide a negative answer to several of the problems discussed in the previous section. The first and central result is the following negative answer to Problem \ref{prob:recursive axiomatizability}, given in Theorem \ref{thm: medvedev's logic is undecidable}:

\begin{theorem}
    The logic $\ML$ is undecidable. In particular, it encodes a periodic tiling problem.
\end{theorem}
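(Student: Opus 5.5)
We reduce the periodic tiling problem to non-membership in $\ML$. The periodic tiling problem asks, given a finite set $\tau$ of Wang tiles, whether $\tau$ tiles the plane periodically, equivalently whether it tiles some torus $\mathbb{Z}_m\times\mathbb{Z}_n$. It is undecidable by the Gurevich--Koryakov strengthening of Berger's theorem. The set of $\tau$ that tile periodically is r.e. Since $\ML$ is co-r.e. by the finite model property, the natural target is a computable map $\tau\mapsto\phi_\tau$ such that
\begin{equation*}
\tau \text{ tiles some torus} \iff \phi_\tau\notin\ML,
\end{equation*}
that is, $\phi_\tau$ is refuted on some $M_n$. Undecidability of $\ML$ then follows immediately.

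\textbf{Construction.} For each $\tau$ we build a finite \emph{tiling poset} $\til$ and let $\phi_\tau$ be a Zakharyaschev-style canonical formula $\alpha(\til,D)$. In the Bezhanishvili--Bezhanishvili formulation, such a formula is refuted on a frame $F$ exactly when there is a partial, order-preserving, cofinal map from $F$ onto $\til$ satisfying the closed-domain condition relative to a designated set $D$ of antichains. The poset $\til$ should encode local tiling data: points for tiles, points for each horizontal and vertical adjacency pair, and a bottom layer. The set $D$ is chosen to control disjunctions, so that a surjection forces every "position" to carry a tile and adjacent positions to carry compatible tiles.

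\textbf{Soundness.} Suppose $\tau$ tiles the torus $\mathbb{Z}_m\times\mathbb{Z}_n$. We construct a refutation on $M_N$ for a suitable $N$, taking the ground set $I=\mathbb{Z}_m\times\mathbb{Z}_n$ possibly times a few extra coordinates. The maximal points of $M_N$ are the singletons, corresponding to grid cells; we map each to the tile placed there. Doubletons, or suitable small subsets, of adjacent cells map to the adjacency points, and all larger sets map to the bottom layer or are left outside the domain. We then check that the map is order-preserving, onto and cofinal, and that the $D$-condition holds. Finiteness of the torus is exactly what makes $M_N$ finite.

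\textbf{Completeness.} Conversely, suppose a map $f\colon M_n\to\til$ witnesses refutation. We read off a tiling. The maximal points are singletons of $[n]$. The preservation conditions, together with the $D$-condition applied to unions of singletons, should force the following: for each cell-labelled point there exist "right" and "up" neighbours, realized by elements $j\in[n]$, with compatible tiles. Since $[n]$ is finite, following the right and up successors from a starting point must eventually cycle. The goal is to extract a consistent doubly periodic tiling from this cycling.

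\textbf{Main obstacle.} The hardest step is completeness, in particular proving that horizontal and vertical successors commute: going right-then-up and up-then-right must reach the same cell, or at least tile-equivalent cells. Without this, one obtains only a graph labelling rather than a grid. I would first try to enforce commutation in $\til$ by adding "square" points for $2\times 2$ blocks, whose $D$-antichains force any set containing a cell, its right neighbour and its up neighbour to determine a unique diagonal. The aim is to use the Boolean structure of $M_n$ (joins of singletons exist) to identify the two diagonal candidates. If direct commutation fails, a fallback is to extract a tiling of arbitrarily large finite squares with matching boundary data and invoke a pigeonhole argument to obtain periodicity. This second route is not guaranteed to work, since tiling arbitrarily large squares does not in general yield a periodic tiling; it succeeds only if the construction forces the matching boundary data to recur within a single finite frame.
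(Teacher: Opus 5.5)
There is a genuine gap. What you give is a plan rather than a reduction. Neither the poset nor the closed domain $D$ is specified, and the completeness direction rests on forcing horizontal and vertical successors to commute, which you leave open. The proposed $2\times 2$ ``square points'' are not worked out, and your fallback via large squares does not yield periodicity, as you note yourself.

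The difficulty is built into your encoding. If the atoms of $M_N$ are grid cells, then the right- and up-neighbour relations that the CDC can force are mere relations on $[N]$, not functions. Even a local commutation condition would still leave you to assemble a consistent doubly periodic torus out of non-unique choices. There is also an orientation problem: an unordered doubleton of cells does not say which cell is to the right and which to the left, so that information must be encoded somehow.

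The idea you are missing is to let the atoms be \emph{coordinates} rather than cells. The paper takes the ground set $\Omega=\Ho_0\sqcup\Ho_1\sqcup\V_0\sqcup\V_1$, i.e.\ column and row indices split by parity.
\begin{itemize}
\item A cell is a mixed doubleton $\{a,b\}$, sent to a tile point $\T_{i,j,t}$. The maximal-cut pairs in $\mathfrak{D}_{\W}$ force every such doubleton into the domain, so $\tau(a,b)=t_{a,b}$ is defined on the whole product.
\item Horizontal adjacency lives on $\Ho_0$--$\Ho_1$ doubletons, via the points $\R_{01},\R_{10},\R_t$, with parity supplying the orientation. Vertical adjacency lives on $\V_0$--$\V_1$ doubletons in the same way.
\end{itemize}
Since horizontal moves change only $a$ and vertical moves only $b$, commutation is automatic. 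One fixes a successor function on columns and one on rows, follows each to a cycle in the finite set $[n]$, and reads off a tiling of $\mathbb{Z}_m\times\mathbb{Z}_k$.

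The CDC then only has to force two things.
\begin{itemize}
\item \emph{Existence of successors}, via the cut points. Adjoin $a$ to a face mapped to $c_{\Ho_{1-i}}$ and apply the successor cut, whose union misses exactly $r$ and $\R_{i,1-i}$.
\item \emph{Compatibility}, via the triple $\{a,a',b\}$ and the compatibility cuts. The union of such a cut misses only $r$, $s_{\Ho_i}$, $c_{\Ho_i}$ and the tile points compatible with $t$.
\end{itemize}
The simplex points $s_T$ are what make the folding map respect these cuts in the soundness direction. A face containing two distinct coordinates of parity $\Ho_i$ has $s_{\Ho_i}$ or $c_{\Ho_i}$ in its image, and these lie outside the compatibility cut.

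None of these mechanisms appears in your proposal: coordinate atoms, parity orientation, cut points forcing successors, and simplex points forcing uniqueness. As it stands, neither direction of the reduction is established.
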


From this we obtain several corollaries, starting from the following negative answer to Problem \ref{prob: decidability of modal medvedev}:

\begin{corollary}
    The logic $\ML_{\Box}$ is undecidable.
\end{corollary}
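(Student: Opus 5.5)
The plan is to reduce the validity problem of $\ML$ to that of $\ML_{\Box}$ via the Gödel--McKinsey--Tarski translation $\sigma$. Once this is a computable many-one reduction, undecidability of $\ML$ (the preceding theorem) transfers at once to $\ML_{\Box}$. Concretely, I will show that for every intuitionistic formula $\phi$,
\begin{equation*}
    \phi\in\ML \iff \sigma(\phi)\in\ML_{\Box}.
\end{equation*}
Since $\sigma$ is plainly computable, a decision procedure for $\ML_{\Box}$ would then decide $\ML$, contradicting the theorem.

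To prove the equivalence, I would argue semantically, frame by frame, rather than go through Esakia's result $\phi\in L\iff\sigma(\phi)\in\sigma(L)$. The reason is that we do not know a priori that $\ML_{\Box}$ coincides with the greatest modal companion $\sigma(\ML)$; it is only known to extend $\mathsf{Grz}$. By definition, $\ML_{\Box}$ is the modal logic of the frames $M_n$, viewed as reflexive transitive (indeed partially ordered) Kripke frames. The standard fact underlying the translation is this: for any preorder $F$, an intuitionistic valuation on $F$ is exactly a modal valuation assigning upsets. For such valuations, $x\Vdash\phi$ intuitionistically iff $x\Vdash\sigma(\phi)$ modally, by a routine induction on $\phi$, using that $\sigma(p)=\Box p$ and $\sigma(\psi\to\chi)=\Box(\sigma\psi\to\sigma\chi)$.

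For the converse direction, take an arbitrary modal valuation $V$. Since every variable in $\sigma(\phi)$ occurs under a $\Box$, we may replace $V$ by the upset valuation $V'(p)=\{x : x\Vdash_V\Box p\}$ without changing the truth of $\sigma(\phi)$. More precisely, $\sigma(\phi)$ evaluated under $V$ agrees with $\sigma(\phi)$ under $V'$, since $\Box p$ under $V'$ equals $\Box p$ under $V$ by transitivity and reflexivity. Hence $F\vDash\phi$ iff $F\vDash\sigma(\phi)$ for every $F$, and in particular for each $M_n$. Intersecting over $n$ gives the displayed equivalence.

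The only point needing any care is precisely this distinction: applying Esakia's theorem without checking whether $\ML_{\Box}=\sigma(\ML)$. The frame-wise argument sidesteps it entirely, so I expect no genuine obstacle. The remaining steps are the routine induction and the observation that each $M_n$ is a partial order, so that $\Box$ is interpreted by the intuitionistic accessibility relation.
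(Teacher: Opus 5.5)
Your proposal is correct and is the same argument as the paper's. The paper's one-line proof reduces $\ML$ to $\ML_{\Box}$ via the computable translation $\sigma$, tacitly using $\phi\in\ML\iff\sigma(\phi)\in\ML_{\Box}$; you verify this equivalence directly, frame by frame on the $M_n$, instead of appealing to Esakia's theorem. (Your caution about $\ML_{\Box}$ versus $\sigma(\ML)$ is harmless but not needed: $\ML_{\Box}$ is a normal extension of $\mathsf{Grz}$ whose intuitionistic fragment is $\ML$ by your argument, so the Blok--Esakia isomorphism gives $\ML_{\Box}=\sigma(\ML)$ anyway.)
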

\begin{proof}
    If $\ML_{\Box}$ was decidable, a decidability algorithm for $\ML$ would follow, by computing the translation $\sigma(\phi)$ of any formula.
\end{proof}

Due to its role in inquisitive logic, the negative answer to Problem \ref{prob:recursive axiomatizability} immediately entails:

\begin{corollary}
    The schematic fragment of inquisitive logic $\mathsf{Inq}$ is undecidable.
\end{corollary}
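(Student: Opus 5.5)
The corollary should follow from the main theorem (undecidability of $\ML$) by reducing $\ML$-theoremhood to membership in $\mathsf{Sch}(\mathsf{Inq})$ via \cite[Thm.~4.4]{Ciardelli2011-CIAIL}. That theorem identifies Medvedev's logic with the schematic fragment of inquisitive logic. To fix the direction of the identification, I would recall it in the form
\[
\mathsf{Sch}(\mathsf{Inq})=\{\phi\in\mathcal{L}_{\mathsf{IPC}} \mid \text{for all substitutions } \sigma,\ \sigma(\phi)\in\mathsf{Inq}\}=\ML,
\]
where formulas of $\mathcal{L}_{\mathsf{IPC}}$ are read in the inquisitive language with $\vee$ as inquisitive disjunction $\disji$. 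I would also check that substitutions range over all formulas of the inquisitive language, including those that are not flat. This matters because the restriction to flat (classical) substitution instances would collapse to classical logic instead.

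The reduction is then the identity map. Given $\phi\in\mathcal{L}_{\mathsf{IPC}}$, the formula obtained by reading $\vee$ as $\disji$ lies in $\mathsf{Sch}(\mathsf{Inq})$ if and only if $\phi\in\ML$. This translation is plainly computable. So a decision procedure for $\mathsf{Sch}(\mathsf{Inq})$ would give one for $\ML$, contradicting Theorem~\ref{thm: medvedev's logic is undecidable}.

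The only step requiring care is the bookkeeping in the first paragraph: one must match the language, and in particular the reading of disjunction, with that of \cite{Ciardelli2011-CIAIL}. Once that is fixed, the argument is a one-line many-one reduction, exactly as in the preceding corollary for $\ML_\Box$.
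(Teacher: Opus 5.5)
Your argument is correct and is exactly the paper's: the corollary follows from the identification $\mathsf{Sch}(\mathsf{Inq})=\ML$ of \cite[Thm.~4.4]{Ciardelli2011-CIAIL} together with Theorem~\ref{thm: medvedev's logic is undecidable}, via the computable translation reading $\vee$ as $\disji$. One small correction to your aside: restricting to flat substitutions would give $\mathsf{Inq}$ itself rather than classical logic, since $\mathsf{Inq}$ is closed under substitutions of $\disji$-free formulas (and is decidable); this does not affect your reduction.
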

It additionally follows that the problem of deciding, given an inquisitive validity $\varphi$, whether all of its substitutional instances $\sigma(\varphi)$ are validities too, is undecidable. We will return to this in greater detail in Subsection~\ref{subsec:schematic}.

Likewise, due to the connections between $\ML$ and the topos of simplicial sets, we obtain:

\begin{corollary}
    The internal propositional logic of the presheaf topos $\mathbf{sSet}$ is undecidable.
\end{corollary}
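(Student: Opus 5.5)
The plan is to deduce the corollary from the main undecidability theorem for $\ML$, using the result of \cite{bevilacqua2025medvedev} (see also \cite{jerabekinternal}) recalled in the history section: $\mathrm{Log}(\mathbf{sSet})=\ML$. Given this equality of sets of formulas, any algorithm deciding $\mathcal{E}\vDash\phi$ for $\mathcal{E}=\mathbf{sSet}$ would decide membership $\phi\in\ML$. The main theorem rules this out.

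First, I fix what "the internal propositional logic" means. It is the set $\mathrm{Log}(\mathbf{sSet})$ of intuitionistic formulas $\phi(p_1,\dots,p_n)$ whose interpretation $\tru{\phi}\colon\Omega^n\to\Omega$, built from the internal Heyting algebra structure of the subobject classifier, factors through $t\colon 1\to\Omega$. I should check that this matches the notion used in \cite{bevilacqua2025medvedev}. Formulas are interpreted uniformly as maps out of $\Omega^n$, that is, with variables ranging internally over all truth values, not externally over global elements. That is exactly the schematic reading under which the equality with $\ML$ holds.

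Second, I invoke the identification $\mathrm{Log}(\mathbf{sSet})=\ML$. Then I argue by contradiction: a decision procedure for $\mathrm{Log}(\mathbf{sSet})$ is literally a decision procedure for $\ML$, with the identity map as the reduction. This contradicts the main theorem, Theorem~\ref{thm: medvedev's logic is undecidable}. No syntactic translation is needed, unlike the modal corollary, where $\sigma$ had to be computable.

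The only step requiring care is the first: making sure the cited equality concerns the same notion of validity, internal truth of the generic map, as in our definition. If a different convention were used, for instance validity under all global-element substitutions, I would have to show it coincides with the internal one, or relate it to $\ML$ separately. For the Mitchell--Bénabou interpretation as defined above, the corollary is immediate.
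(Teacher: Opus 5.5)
Your proposal is correct and follows the paper's argument: the corollary comes from combining the identification $\mathrm{Log}(\mathbf{sSet})=\ML$ of \cite{bevilacqua2025medvedev} (see also \cite{jerabekinternal}) with the undecidability of $\ML$, with the identity map as the reduction. Your caution about the notion of validity is well placed, since validity under global elements would only give the logic of $\mathrm{Sub}(1)=\{0,1\}$, i.e.\ classical logic; for the Mitchell--B\'enabou interpretation fixed in the paper, nothing further is needed.
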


We also obtain an answer to problems concerning the logic $\sko$. The first answers the relationship between this logic and $\ML$, answering in the negative Problem \ref{prob: coincidence problem}, and arose from the periodicity of the tiling:

% \rodrigonote{Check if we in fact have this result}

\begin{theorem}
    The logic $\sko$ is strictly contained in $\ML$. In particular, there is a formula $\rho_{\mathcal{W}}$, associated to a periodic set of tiles, such that $\rho_{\mathcal{W}}\in \ML\setminus \sko$.
\end{theorem}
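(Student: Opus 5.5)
The plan is to derive this from the two reductions developed later in the paper, applied to one and the same formula. To a finite set of Wang tiles $\mathcal{W}$ the paper associates a tiling formula $\rho_{\mathcal{W}}$, built from Zakharyaschev-style canonical formulas of the tiling posets of Section~\ref{sec: tiling posets}. I would isolate two equivalences about it. The first is the one behind Theorem~\ref{thm: medvedev's logic is undecidable}: $\rho_{\mathcal{W}}\notin \ML$ iff $\mathcal{W}$ admits a periodic tiling of the plane. The second is the one from Section~\ref{sec: undecidability skortsov}: $\rho_{\mathcal{W}}\notin\sko$ iff $\mathcal{W}$ tiles the plane at all.

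Given these, the theorem is quick. The inclusion $\sko\subseteq\ML$ holds by definition, since every $M_n$ is an $M_I$ with $I=[n]$. For strictness, fix an aperiodic tile set $\mathcal{W}$, such as Berger's or Robinson's: it tiles $\mathbb{Z}^2$ but admits no periodic tiling. By the second equivalence, $\rho_{\mathcal{W}}\notin\sko$. By the first, $\rho_{\mathcal{W}}\in\ML$. So $\rho_{\mathcal{W}}\in\ML\setminus\sko$. (Despite the wording ``periodic set of tiles'' in the statement, the witness is an \emph{aperiodic} set.)

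The substance lies in the two equivalences, which I would prove in the following order.

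\begin{enumerate}
\item \emph{Periodic tiling refutes $\rho_{\mathcal{W}}$ in some $M_n$.} A periodic tiling factors through a finite torus $\mathbb{Z}_k\times\mathbb{Z}_m$. Taking $I$ to be the cells of the torus, the grid structure and tile labels are coded into $M_I$, which gives a refutation in a finite Medvedev frame.
\item \emph{Refutation in some $M_n$ yields a periodic tiling.} Any refutation of $\rho_{\mathcal{W}}$ in a finite $M_n$ gives, via the canonical formula, a subreduction onto a tiling poset. Its grid successor maps are total functions on a finite set of points, so iterating them must cycle. From this I would extract a tiling of a torus, and hence a periodic tiling.
\item \emph{Any tiling refutes $\rho_{\mathcal{W}}$ in $M_{\mathbb{Z}^2}$.} The construction of step (i) is repeated with $I=\mathbb{Z}^2$. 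It uses no periodicity.
\item \emph{Refutation in some $M_I$ yields a tiling of the plane.} The same extraction as in step (ii) now gives a possibly non-periodic tiling. Where infinity is needed, I would use König's lemma or compactness, producing tilings of arbitrarily large squares and hence of the plane.
\end{enumerate}

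I expect the main obstacle to be the interplay between steps (iii) and (ii). One fixed formula must be refutable in an infinite Medvedev frame from an arbitrary tiling, yet refutable in a finite Medvedev frame \emph{only} from a periodic one. The canonical formula must therefore force the grid in any refuting frame to be closed under the right and up successor maps, and consistently tiled, rather than merely allowing a finite patch. That closure is exactly what makes finiteness imply periodicity in step (ii).

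My first attempt would be to encode ``every point has a right and an up neighbour'' and ``right-then-up equals up-then-right'' through the disjunction-free parts of the canonical formula, which are preserved under the relevant subreductions. I would rely on the fine control over disjunctions noted in the introduction to ensure that the finite points carrying a refutation cannot collapse into a non-periodic finite structure. If the commutation condition cannot be forced exactly, I would settle for enforcing it up to a bounded error that still leads, by pigeonhole, to a genuine period.
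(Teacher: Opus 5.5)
Your argument is the paper's own proof (Corollary~\ref{cor: separation of skvortsov and medvedev}): for an aperiodic $\W$, the direction ``Medvedev cover $\Rightarrow$ periodic tiling'' gives $\alpha(P_{\W},\mathfrak{D}_{\W})\in\ML$, and the direction ``tiling $\Rightarrow$ Skvortsov cover'' gives $\alpha(P_{\W},\mathfrak{D}_{\W})\notin\sko$, so only one half of each of your equivalences is needed. Your speculative sketch of those reductions departs from the paper, which takes the atoms to be row and column coordinates $\Omega_\tau=\Ho_0\sqcup\Ho_1\sqcup\V_0\sqcup\V_1$ rather than cells and reads the tile at $(a,b)$ off the doubleton $\{a,b\}$; the grid is then automatically a product $A\times B$, so the commutation problem you anticipate never arises, and existence of successors is forced by the closed-domain condition (via the cut points $c_T$) rather than by the disjunction-free part.
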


The key to this theorem lies in realizing that the finite Medvedev frames force periodicity in tiles, whilst the infinite Medvedev frames, namely $M_{\omega}$, need not induce any cycles. This aperiodicity suggested to us the following answer to Problem \ref{prob: skortsov logic decidability} and~\ref{prob: modal skortsov logic decidability}:

\begin{theorem}
    The logics $\sko$ and $\sko_\Box$ are both undecidable. In particular they encode a (non-periodic) tiling problem.
\end{theorem}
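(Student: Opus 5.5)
The plan is to reduce the (unrestricted) tiling problem to non-validity in $\sko$. Berger's theorem gives undecidability of the question whether a finite Wang tile set $\mathcal{W}$ tiles $\mathbb{Z}\times\mathbb{Z}$ (equivalently $\omega\times\omega$). To each $\mathcal{W}$ we attach a tiling formula $\rho_{\mathcal{W}}$, built in Section~\ref{sec: tiling posets} as a Zakharyaschev-style canonical formula of a finite tiling poset together with a closed domain condition. The target equivalence is
\[
\rho_{\mathcal{W}}\notin\sko \iff \mathcal{W}\text{ tiles the plane}.
\]
Since $\sko$ is recursively axiomatizable by \cite{skvortsov1979logic}, the set of theorems is r.e., and the reduction then makes it undecidable. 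For $\sko_\Box$ we use the Gödel--McKinsey--Tarski translation: $\phi\in\sko\iff\sigma(\phi)\in\sko_\Box$. This holds because the translation is faithful on every frame, and the two logics are defined over the same class of frames $M_I$. So a decision procedure for $\sko_\Box$ would yield one for $\sko$.

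For the direction ``tiling $\Rightarrow$ refutation'', take a tiling $t\colon\omega\times\omega\to\mathcal{W}$ and work in $M_\omega$, or in $M_{\omega\times\omega}$. The points are nonempty subsets, and we encode the grid by choosing, for each cell $(i,j)$, specific finite sets (say cofinite or singleton-generated patterns) realizing the ``cell'', ``right neighbour'' and ``up neighbour'' points. We then define a valuation so that each cell point carries the propositional pattern of its tile. We then verify that the canonical formula's closed domain condition is met, so that the map from $M_I$ onto the tiling poset is a cofinal subreduction. The same construction done for finite tilings is presumably what the $\ML$ argument uses. The only change here is that we never need to close up a torus, so no periodicity is required. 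This is exactly why the infinite frames suffice.

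For the converse, suppose $M_I\not\vDash\rho_{\mathcal{W}}$. By the refutation criterion for canonical formulas, we get a subreduction from $M_I$ onto the tiling poset satisfying the domain condition. Using the Boolean structure of $M_I$ (intersections of points realize meets below) and the disjunctive control of the domain condition, we extract from any refuting point an infinite chain of grid points. Right and up successors exist and commute, since the meet of the two successor sets supplies the diagonal point. Reading off which tile-variable holds at each extracted point gives a tiling of $\omega\times\omega$, and by compactness (König) a tiling of the plane.

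I expect the main obstacle to be the commutation step in the converse: showing that following ``right then up'' and ``up then right'' lands on points carrying the same tile. In the finite case this is presumably where the finite $M_n$ force a repetition, giving periodicity. Here I would first try to use uniqueness enforced by the formula: the tile-variables are made mutually exclusive and determined up to the domain condition, and the diagonal point is chosen as the union/intersection of the two paths. The argument then only needs that the tile assignment at a point depends on its position in the grid, not on the path used to reach it, and this should follow from the same lemma used for $\ML$, with the periodicity clause dropped.
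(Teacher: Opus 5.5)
There is a genuine gap. Your transfer from $\sko$ to $\sko_\Box$ via the G\"odel--McKinsey--Tarski translation is correct: the translation is faithful frame by frame on the $M_I$, and this is exactly how the paper passes to the modal logics. The K\"onig step from $\omega\times\omega$ to the plane is also fine. The problem is the core equivalence, $\mathcal{W}$ tiles the plane $\iff \mathrm{Cov}_{\sko}(P_{\mathcal{W}},\mathfrak{D}_{\mathcal{W}})$: neither direction is actually carried out.

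The missing idea is the \emph{coordinate} encoding. In the paper the atoms of the Medvedev frame are column and row indices, not cells:
\begin{itemize}
\item The ground set is $\Omega=\Ho_0\sqcup\Ho_1\sqcup\V_0\sqcup\V_1$, and singletons map to the four maximal points.
\item A cell is a doubleton $\{a,b\}$ with $a$ a column atom and $b$ a row atom; it is sent to $\T_{i,j,\tau(a,b)}$.
\item Doubletons of opposite-parity column (resp.\ row) atoms go to $\R_{01},\R_{10},\R_t$ (resp.\ the $\So$'s).
\item Monochromatic faces go to $s_T$, and the whole parity class goes to $c_T$. In $M_\omega$ that class is an infinite face, so your ``specific finite sets'' could not hit the cut points.
\end{itemize}
Conversely, given a cover $f$, the argument runs through three lemmas:
\begin{itemize}
\item The maximal cuts force every $\{a,b\}$ into the domain and onto a tile point.
\item The cut point $c_{\Ho_{1-i}}$ together with the successor cut produces, for each column atom $a$, an atom $a'$ with $f(\{a,a'\})=\R_{i,1-i}$.
\item The compatibility cut applied to the tripleton $\{a,a',b\}$ gives $\mathrm{e}(t_{a,b})=\mathrm{w}(t_{a',b})$ uniformly in $b$, and dually for rows.
\end{itemize}

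Since $M_I$ contains every nonempty subset, every pair (column atom, row atom) is automatically a cell. The extracted grid is therefore literally the product $\{a_0,a_1,\dots\}\times\{b_0,b_1,\dots\}$ of two independently chosen successor chains, and the commutation problem you single out as the main obstacle never arises.

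With cells as points, as in your sketch, commutation does arise, and you offer no way to solve it. Saying ``the meet of the two successor sets supplies the diagonal point'' places no constraint on the tile there. Moreover, in $(\mathcal{P}(I)\setminus\{\varnothing\},\supseteq)$ meets are unions, not intersections; intersections are joins when they are nonempty, and they may be empty. Deferring the step to ``the same lemma used for $\ML$'' does not help either. The $\ML$ argument has no path-independence lemma; periodicity there comes from the successor chains of the finitely many atoms having to cycle, not from commutation.

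Similarly, in the forward direction ``we then verify the closed domain condition'' hides all the work. For example, excluding $s_{\Ho_i}$ from the compatibility cut is what forces two $\Ho_i$-atoms in a face to coincide, which reduces the check to the tripleton case.
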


\section{Preliminaries}\label{sec: preliminaries}

In this section we recall all basic facts and notation used in the undecidability proof. The reader is referred to \cite{Chagrov1997-cr} for most general facts about superintuitionistic and modal logics discussed here.

Throughout we use $\mathcal{L}_{\mathsf{IPC}}$ to denote the language of intuitionistic logic, and $\mathcal{L}_{\mathsf{ML}}$ to denote the language of normal modal logic. We say that a set of formulas $L$ is a \emph{logic} if it is closed under Modus Ponens and uniform substitution. A \emph{superintuitionistic logic} is a logic in the language $\mathcal{L}_{\mathsf{IPC}}$ such that $\mathsf{IPC}\subseteq L$. We denote by $\mathrm{Ext}(L)$ the lattice of extensions of $L$. Similarly, a \emph{normal modal logic} is a logic in the language $\mathcal{L}_{\mathsf{ML}}$ such that $\mathsf{K}\subseteq L$ which in addition is closed under necessitation.

We will work throughout with posets $(P,\leq)$, denoted simply as $P$. Such a poset is \emph{rooted} if there is an element $r\in P$ (the root) such that for each $p\in P$, $r\leq p$. Given $p\leq q$ we frequently refer to $q$ as a \emph{successor} of $p$; it is an \emph{immediate successor} if $p<q$ and there is no $q'$ such that $p< q'<q$. A \emph{chain} will mean a sequence of pairwise comparable points, whilst an \emph{antichain} will mean a sequence of pairwise incomparable points.

A subset $U\subseteq P$ is called \emph{upwards-closed} (often simply an \emph{upset}) if whenever $x\in U$ and $x\leq y$ then $y\in U$. We denote the set of subsets of $P$ by $\up(P)$. Given a subset $C\subseteq P$ we write
\begin{equation*}
    \Box C\coloneqq \{p\in P : \forall q(p\leq q \Rightarrow q\in C)\}.
\end{equation*}
Given two upsets $U,V$ we write simply $U\rightarrow V\coloneqq \Box (P\setminus U\cup V)$. Note that $U\rightarrow V$ is an upset.

Given a subset $C\subseteq P$ we will write $\min C$ to mean the set of minimal elements with respect to the order in $C$; similarly we will write $\max C$. Note that for any subset $C$, $\min C$ and $\max C$ are always antichains.

\subsection{Heyting algebras and discrete duality}

To understand the structure of canonical formulas, it will be valuable to understand the algebraic semantics of superintuitionistic logics:

\begin{definition}[Heyting algebra]
    A bounded lattice $(H,\leq)$ is called a \emph{Heyting algebra} if for each $a,b\in H$, there is $a\rightarrow b\in H$ such that for each $c\in H$
    \begin{equation*}
        a\wedge c\leq b\iff c\leq a\rightarrow b.
    \end{equation*}
\end{definition}

Heyting algebras, in the signature $(\wedge,\vee,\rightarrow,0,1)$, form a variety of algebras, and a category, denoted $\mathbf{HA}$. The subcategory of its finite algebras is denoted $\mathbf{FinHA}$.  Through this semantics, validity of formulas is therefore turned into an equational constraint: given a formula $\phi(p_{1},\dots,p_{n})$ we write $H\vDash \phi$ to mean that for each $a_{1},\dots,a_{n}$ we have
\begin{equation*}
    H\vDash \phi(a_{1},\dots,a_{n})=1.
\end{equation*}

Given a class $\mathbf{A}$ of algebras, we often talk of the \emph{logic of $A$},
\begin{equation*}
    \mathrm{Log}(\mathbf{A})=\{\phi \in \mathcal{L}_{\mathsf{IPC}} : \forall H\in \mathbf{A}, \ H\vDash \phi\}.
\end{equation*}

In this paper we will work mostly with special Heyting algebras arising from posets. For these, a duality theory exists developed by De Jongh and Troelstra \cite{DeJongh1966}, which stands in parallel to the duality developed by Esakia\footnote{For a book-length treatment we refer the reader to \cite{Esakiach2019HeyAlg}.} \cite{esakiatopologicalkripkemodels}.

\begin{definition}[Upset algebra]
    Given a poset $P$, the \emph{upset algebra} of $P$ is the Heyting algebra $(\up(P),\cap,\cup,\varnothing,P,\rightarrow)$.
\end{definition}

We denote by $\mathbf{HA}_{\infty}$ the (non-full) subcategory of Heyting algebras which are upset algebras, together with complete Heyting homomorphisms. 

Note that in general, given an order-preserving map $f\colon P\to Q$, $f^{-1}\colon \up(Q)\to \up(P)$ maps upsets to upsets, and is a distributive lattice homomorphism. But it may fail to be a Heyting homomorphism:

\begin{definition}[P-morphism]
    An order-preserving map $f\colon P\to Q$ is called a \emph{p-morphism} if for all $p\in P$, and $q\in Q$ whenever $f(p)\leq q$ then there is some $p'\geq p$ such that $f(p')=q$.
\end{definition}

\begin{proposition}[P-morphisms correspond to Heyting homomorphisms]
    If $f\colon P\to Q$ is a p-morphism, then $f^{-1}\colon \up(Q)\to \up(P)$ is a complete Heyting algebra homomorphism.
\end{proposition}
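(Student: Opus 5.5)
We first record what holds for any order-preserving $f\colon P\to Q$. Preimages commute with arbitrary intersections and unions, and $f^{-1}(Q)=P$, $f^{-1}(\varnothing)=\varnothing$. Monotonicity of $f$ guarantees that $f^{-1}(U)$ is an upset whenever $U$ is. Hence $f^{-1}\colon \up(Q)\to\up(P)$ preserves all meets and joins of the complete lattice $\up(Q)$, since these are computed as set-theoretic intersections and unions. It also preserves the bounds. The only thing left to verify is preservation of implication, $f^{-1}(U\rightarrow V)=f^{-1}(U)\rightarrow f^{-1}(V)$, and this is where the p-morphism condition enters.

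Recall that $U\rightarrow V=\{q : \forall q'\geq q\,(q'\in U\Rightarrow q'\in V)\}$. We prove the two inclusions separately.

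For $\subseteq$, we use only monotonicity. Let $p\in f^{-1}(U\rightarrow V)$ and $p'\geq p$ with $f(p')\in U$. Then $f(p')\geq f(p)$, so $f(p')\in V$, that is, $p'\in f^{-1}(V)$.

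For $\supseteq$, we use the p-morphism condition; this is the one nontrivial step. Suppose $p\in f^{-1}(U)\rightarrow f^{-1}(V)$, and let $q\geq f(p)$ with $q\in U$. By the p-morphism property there is $p'\geq p$ with $f(p')=q$. Then $p'\in f^{-1}(U)$, hence $p'\in f^{-1}(V)$, hence $q\in V$. So $f(p)\in U\rightarrow V$.

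Putting these together, $f^{-1}$ preserves $\wedge,\vee,\rightarrow,0,1$ together with arbitrary meets and joins, so it is a complete Heyting homomorphism. We expect no real obstacle beyond the back-condition step. The one point to check is that arbitrary meets in $\up(Q)$ are genuinely intersections, which holds because intersections of upsets are upsets; so completeness comes for free.
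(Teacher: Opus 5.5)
Your proof is correct and complete. Monotonicity alone gives preservation of arbitrary meets, arbitrary joins and the bounds, and the back-condition is used exactly once, for the inclusion $f^{-1}(U)\rightarrow f^{-1}(V)\subseteq f^{-1}(U\rightarrow V)$. The paper states this proposition as a standard fact without proof, only remarking that $f^{-1}$ is always a lattice homomorphism for order-preserving $f$ but may fail to preserve $\rightarrow$; your argument is the usual verification behind that remark.
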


Put together, this assignment defines a functor $\up\colon \mathbf{Pos}_{p}\to \mathbf{HA}_{\infty}^{\mathrm{op}}$ from the category of finite posets with p-morphisms to the opposite of the category of finite Heyting algebras and their homomorphisms. There likewise exists a functor back, by  considering the \emph{completely prime spectrum} of the algebra $\mathsf{Spec}_{\infty}(H)$, which picks out the collection of completely prime filters (equivalently, principal filters generated by completely join-prime elements) and orders them by inclusion. One likewise has that given a complete Heyting algebra homomorphism $f\colon H\to H'$, the assignment $f^{-1}\colon \mathsf{Spec}_{\infty}(H')\to \mathsf{Spec}(H)$ is always a p-morphism, defining a functor $\mathsf{Spec}_{\infty}\colon \mathbf{HA}\to \mathbf{Pos}_{p}^{\mathrm{op}}$. Indeed, we have \cite[pp.329]{DeJongh1966}:

\begin{proposition}[De Jongh-Troelstra duality]\label{prop: finite Esakia duality}
    The functors $\up$ and $\mathsf{Spec}_{\infty}$ define a dual equivalence.
\end{proposition}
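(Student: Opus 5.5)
To prove Proposition~\ref{prop: finite Esakia duality}, I would exhibit natural isomorphisms $P\cong \mathsf{Spec}_{\infty}(\up(P))$ for every poset $P$ and $H\cong \up(\mathsf{Spec}_{\infty}(H))$ for every $H$ in $\mathbf{HA}_{\infty}$. I would then check that the functors act correctly on morphisms. Since objects of $\mathbf{HA}_{\infty}$ are by definition upset algebras, the second isomorphism reduces to the first up to transport of structure. Most of the work is therefore on the poset side.

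\textbf{Step 1 (points are recovered).} For a poset $P$, the completely join-prime elements of $\up(P)$ are exactly the principal upsets ${\uparrow}p$. Indeed, any upset $U$ equals $\bigcup_{p\in U}{\uparrow}p$. So if $U$ is completely join-prime, then $U\subseteq \bigcup_{p\in U}{\uparrow}p$ forces $U\subseteq{\uparrow}p$ for some $p\in U$, hence $U={\uparrow}p$. Conversely, ${\uparrow}p\subseteq\bigcup_i U_i$ gives $p\in U_i$ for some $i$, hence ${\uparrow}p\subseteq U_i$. Thus the completely prime filters are the sets $F_p=\{U : p\in U\}$. The map $p\mapsto F_p$ is then a bijection onto $\mathsf{Spec}_{\infty}(\up(P))$: it is injective because $F_p=F_q$ implies $p\in{\uparrow}q$ and $q\in{\uparrow}p$. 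It is an order-isomorphism because $p\le q$ holds iff every upset containing $p$ contains $q$, i.e.\ iff $F_p\subseteq F_q$. Here I would fix the orientation convention for the spectrum ordering so that it matches $\le$ on $P$.

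\textbf{Step 2 (morphisms).} First, for a p-morphism $f$, the preceding Proposition shows that $f^{-1}$ is a complete Heyting homomorphism, so $\up$ is well defined on arrows. Second, for a complete Heyting homomorphism $h\colon \up(Q)\to\up(P)$, I would show $h=f^{-1}$ for a unique p-morphism $f$. Under Step 1, $h^{-1}$ sends the filter $F_p$ to $\{V : p\in h(V)\}$. This is completely prime because $h$ preserves arbitrary joins, so it equals $F_{f(p)}$ for a unique $f(p)\in Q$. By construction $p\in h(V)\iff f(p)\in V$, i.e.\ $h=f^{-1}$, and $f$ is order-preserving.

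\textbf{Step 3 (the p-morphism condition).} This is the main obstacle, since it is the only place where preservation of $\rightarrow$ is used. Suppose $f(p)\le q$ but no $p'\ge p$ has $f(p')=q$. Then $p\notin h({\uparrow}q)\rightarrow h(V)$ would have to be compared with the upset $V=Q\setminus{\downarrow}q$. Concretely, preservation of implication gives $h({\uparrow}q\rightarrow V)=h({\uparrow}q)\rightarrow h(V)$. We have $f(p)\notin {\uparrow}q\rightarrow V$, because $q\ge f(p)$ lies in ${\uparrow}q$ but not in $V$. On the other hand, every $p'\ge p$ with $f(p')\in{\uparrow}q$ satisfies $f(p')>q$, so $f(p')\in V$. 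Hence $p\in h({\uparrow}q)\rightarrow h(V)$, a contradiction.

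\textbf{Step 4 (naturality).} Naturality of both isomorphisms follows from the defining identity $p\in f^{-1}(V)\iff f(p)\in V$, which is a routine diagram check.
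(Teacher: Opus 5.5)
The paper gives no proof of this proposition. It is imported from De Jongh and Troelstra, with $\mathbf{Pos}_{p}$ explicitly taken to be finite posets, so your argument replaces a citation rather than following or departing from an argument in the text. What you give is the standard direct verification:
\begin{itemize}
\item points are recovered as the completely join-prime upsets ${\uparrow}p$;
\item a homomorphism $h$ is realised as $f^{-1}$ by pulling back the filters $F_p$;
\item preservation of $\rightarrow$ is used exactly once, through the pair ${\uparrow}q$ and $Q\setminus{\downarrow}q$, to obtain the back condition.
\end{itemize}
The first sentence of your Step~3 is garbled, but the argument after ``Concretely'' is right. For finite posets the proof is complete, since every filter of a finite lattice is principal and homomorphisms between finite algebras are automatically complete. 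Compared with the bare citation, your version isolates exactly what implication contributes: Steps~1--2 alone only show that complete lattice maps come from order-preserving maps, and Step~3 is where the p-morphism condition appears.

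You state Step~1 for \emph{every} poset, and in that generality one inference in Step~2 needs repair. You conclude that $\{V : p\in h(V)\}$ equals some $F_q$ because it is completely prime, which uses only preservation of joins. Read literally, the completely prime filters of $\up(P)$ are the sets $\{U : U\cap I\neq\varnothing\}$ for $I$ a nonempty directed downset of $P$. For instance, the nonempty upsets of $\mathbb{N}$ form a completely prime filter of $\up(\mathbb{N})$ that is not an $F_p$. So your identification in Step~1 holds only under the paper's parenthetical reading, where points are \emph{principal} filters generated by completely join-prime elements.

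To land in that class you need $h$ to preserve arbitrary meets. Then $W=\bigcap\{V : p\in h(V)\}$ satisfies $p\in h(W)$, so $W$ generates the filter. By join preservation $W$ is completely join-prime, hence $W={\uparrow}q$. Join preservation alone does not suffice: the map $\up(\mathbb{N})\to\up(\{\ast\})$ sending every nonempty upset to the top is a Heyting homomorphism preserving all joins, yet it is not $f^{-1}$ for any map $f$. Complete Heyting homomorphisms do preserve meets, so the fix is one line, but it should be stated.
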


This equivalence will be used freely in the subsequent section.

\subsection{Canonical formulas}

An important ingredient below is the usage of Zakharyaschev's canonical formulas for superintuitionistic logic \cite{zacharyaschevcanonical}. Our particular formulations, of a more algebraic nature, are taken from \cite{BEZHANISHVILI2009}, where these formulas were reinterpreted in light of Esakia duality.

\begin{definition}[Opremum]
    Given a Heyting algebra $H$ we say that an element $o\in H$ is the \emph{opremum} if $o$ is the second largest element of $H$.
\end{definition}

\begin{remark}[Subdirectly irreducible = Opremum = Rooted poset]
    It is well-known \cite[A.1.2]{Esakiach2019HeyAlg} that a Heyting algebra is subdirectly irreducible if and only if it has an opremum. Moreover, under Esakia duality, a finite Heyting algebra $H$ is subdirectly irreducible if and only if $\mathsf{Spec}(H)$ is rooted. We will make free use of this in what follows.
\end{remark}

\begin{definition}[Canonical formula]
    Let $A$ be a finite subdirectly irreducible Heyting algebra, with $o$ its opremum, and let $D\subseteq A^{2}$, called the \emph{closed domain}. For each $a\in A$ we  introduce a variable $p_{a}$, and define,
\begin{align*}
\Delta(A,D) \coloneqq 
 & (p_{0}\leftrightarrow 0)\wedge (p_{1}\leftrightarrow 1) \wedge \bigwedge   \{p_{a\ast b}\leftrightarrow(p_a\ast p_b) : a,b\in A, \ast\in \{\wedge,\rightarrow\}\} \\
 &\wedge \bigwedge \{p_{a\vee b}\leftrightarrow(p_a\vee p_b) : (a,b)\in D\}.
\end{align*}
The \emph{canonical formula} associated with the pair $(A,D)$ is the formula:
\begin{equation*}
    \alpha(A,D)\coloneqq \Delta(A,D)\rightarrow p_{o}.
\end{equation*}
\end{definition}

Intuitively, a canonical formula consists of a diagram formula. The usage of the closed domains arises out of the fact that Heyting algebras are not locally finite: one may generate infinitely many formulas already with one generator.

\begin{notation}
    Using the duality of Proposition \ref{prop: finite Esakia duality} -- and by abuse of notation -- given a finite rooted poset $P$ and $D\subseteq \up(P)^{2}$, we will write $\alpha(P,D)$ for the canonical formula $\alpha(\up(P),D)$.
\end{notation}

\begin{remark}[Yankov formulas and stable canonical formulas]
    Several related classes of formulas are connected to canonical formulas.
    \begin{enumerate}
        \item For a finite subdirectly irreducible Heyting algebra $A$, a canonical formula $\alpha(A,D)$ is called a \emph{Yankov formula} whenever $D=A^{2}$. In such a case we denote the formula by $\mathcal{J}(H)$, or $\mathcal{J}(P)$ when referring to the associated finite poset.
        \item Canonical formulas exploit the local tabularity of the $(\wedge,\rightarrow,\top,\bot)$-fragment of $\mathsf{IPC}$, due to Diego \cite{Diego1966-DIESLA-5}. If instead one exploits the local tabularity of the $(\wedge,\vee,\top,\bot)$-fragment, the equivalent of canonical formulas are the \emph{stable canonical formulas}\cite{stablecanonicalformulas}. The choice to work with canonical formulas made here lies in the fact that these seem to make the definition of some maps in the reductions given in Section \ref{sec: undecidability of med} substantially easier.
    \end{enumerate}
\end{remark}

The classical Yankov-De Jongh theorem \cite[Lem.~3.3]{Bezhanishvili2022jankovformulas} establishes that for any Heyting algebra $H$, $A$ a finite subdirectly irreducible Heyting algebra, we have
\begin{equation*}
    H\nvDash \mathcal{J}(A) \iff \exists H_{0},\text{ such that} \  H\twoheadrightarrow H_{0}, \text{ and } A\hookrightarrow H_{0},
\end{equation*}
that is, a homomorphic image of $H$ such that $A$ embeds in it. In symbols, $A\in \mathbb{HS}(H)$. Put in different terms, there is a subdirectly irreducible factor of $H$ into which $A$ embeds as a Heyting algebra.

The idea of passing to canonical formulas is to demand that only the joins in $D$ be calculated correctly. This translates to the following algebraic meaning \cite[Thm.~5.3]{BEZHANISHVILI2009}:

\begin{proposition}[Algebraic criterion for validating canonical formulas]\label{prop: algebraic criterion for validating canonical formulas}
    For each Heyting algebra $H$, and each finite subdirectly irreducible Heyting algebra $A$, with opremum $o$, and $D\subseteq A^{2}$ we have that $H\nvDash \alpha(A,D)$ if and only if there is a homomorphic image $H_{0}$ of $H$ a \emph{bounded implicative semilattice embedding}\footnote{Recall that such homomorphisms are those which preserve the $(\wedge,\rightarrow,0,1)$ subreduct.} $h\colon A\to H_{0}$, such that for each $(a,b)\in D$ we have:
    \begin{equation*}
        h(a\vee b)=h(a)\vee h(b).
    \end{equation*}
\end{proposition}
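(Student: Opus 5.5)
We prove the two directions separately. The common tool is the standard correspondence between refuting valuations of $\Delta(A,D)\rightarrow p_{o}$ and maps out of the Lindenbaum-type algebra generated by the variables. Throughout we use that every subdirectly irreducible homomorphic image of $H$ corresponds to a filter $F$ of $H$ maximal with respect to avoiding some element.

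\emph{($\Leftarrow$).} Suppose $\pi\colon H\twoheadrightarrow H_{0}$ is onto and $h\colon A\to H_{0}$ is a $(\wedge,\rightarrow,0,1)$-embedding that preserves the joins in $D$. For each $a\in A$, choose $b_{a}\in H$ with $\pi(b_{a})=h(a)$, and valuate $p_{a}\mapsto b_{a}$. Each conjunct of $\Delta(A,D)$ is then sent by $\pi$ to $1$:
\begin{itemize}
\item the $\wedge$- and $\rightarrow$-conjuncts, because $h$ preserves these operations;
\item $p_{0}$ and $p_{1}$, because $h$ is bounded;
\item the $D$-conjuncts, by hypothesis.
\end{itemize}
Hence $\pi(\Delta)=1$. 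Since $h$ is injective and $o\neq 1$, we get $\pi(b_{o})=h(o)\neq 1$. Therefore $\pi(\Delta\rightarrow p_{o})\neq 1$, so the value of $\alpha(A,D)$ in $H$ is not $1$, and $H\nvDash\alpha(A,D)$.

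\emph{($\Rightarrow$).} Suppose a valuation $v$ gives $c\coloneqq v(\Delta\rightarrow p_{o})\neq 1$. Put $d\coloneqq v(\Delta)$ and $e\coloneqq v(p_{o})$, so $d\not\leq e$. Let $F$ be the principal filter ${\uparrow}d$. Pass to $H_{0}\coloneqq H/F$ via the quotient $\pi$. There, every conjunct of $\Delta$ holds, because $\pi(d)=1$; and $\pi(e)\neq 1$, because $d\not\leq e$.

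Define $h(a)\coloneqq\pi(v(p_{a}))$. The conjuncts of $\Delta$ make $h$ preserve $0$, $1$, $\wedge$ and $\rightarrow$, and make it preserve the joins in $D$.

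\emph{Injectivity of $h$.} This is the main step. In a Heyting algebra it suffices to show that the kernel filter $\{a : h(a)=1\}$ is $\{1\}$, because $h(a)=h(b)$ exactly when $h(a\leftrightarrow b)=1$. If $a\neq 1$, then $a\leq o$, since $o$ is the opremum of the subdirectly irreducible algebra $A$. Monotonicity of $h$ (a consequence of preserving $\wedge$) then gives $h(a)\leq h(o)=\pi(e)\neq 1$. So $h$ is injective.

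It may be cleaner to replace $H_{0}$ by a further subdirectly irreducible quotient, taking a filter maximal among those containing $d$ and avoiding $e$. This is not needed, since the statement only asks for some homomorphic image. One point to check is that the relevant filter is generated by the single element $d$: this holds because $\Delta$ is a finite conjunction, so no compactness or infinitary argument is required.
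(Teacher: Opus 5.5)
Your proof is correct. The paper gives no proof of its own and cites Bezhanishvili--Bezhanishvili, whose argument is essentially yours: refute via a valuation, quotient by the filter generated by the value of $\Delta(A,D)$, set $h(a)$ to be the class of the value of $p_a$, and get injectivity because every $a\neq 1$ lies below the opremum $o$, whose image is not $1$. You are also right that no passage to a subdirectly irreducible quotient is needed for the statement as formulated here.
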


Using a version of Proposition \ref{prop: algebraic criterion for validating canonical formulas}, Zakharyaschev proved the following uniform axiomatization result \cite{zacharyaschevcanonical} (see also \cite[Cor.~5.10]{BEZHANISHVILI2009}):

\begin{theorem}[Canonical formula axiomatization]
    For every superintuitionistic logic $L$, $L$ is axiomatized by canonical formulas.
\end{theorem}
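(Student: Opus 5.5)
The plan is to derive the statement from Proposition~\ref{prop: algebraic criterion for validating canonical formulas}, together with two standard facts: every superintuitionistic logic is the logic of its subdirectly irreducible algebras, and $\mathsf{IPC}$ is complete with respect to finite rooted posets. Fix a superintuitionistic logic $L$ and set
\begin{equation*}
\Gamma_L \coloneqq \{\alpha(A,D) : \alpha(A,D)\in L,\ A \text{ finite s.i.},\ D\subseteq A^2\}.
\end{equation*}
Clearly $\mathsf{IPC}\oplus\Gamma_L\subseteq L$, so only the reverse inclusion needs work. Suppose $\phi\notin L$. Then there is a Heyting algebra $H\vDash L$ and a valuation $v$ with $v(\phi)\neq 1$. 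Passing to a subdirectly irreducible homomorphic image separating $v(\phi)$ from $1$, we may assume $H$ is subdirectly irreducible and $v(\phi)\leq o_H$ (its opremum).

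The key step is a \emph{finite refutation pattern}. Let $\Sigma$ be the set of subformulas of $\phi$, and let $B\subseteq H$ be the $(\wedge,\rightarrow,0,1)$-subalgebra generated by $v[\Sigma]\cup\{o_H\}$. By Diego's theorem (local finiteness of the implicative-semilattice fragment, recalled above), $B$ is finite. A finite implicative semilattice is automatically a Heyting algebra, with join computed inside $B$; call it $A$. It is subdirectly irreducible with opremum $o_H$, because $o_H\in B$ and every element other than $1$ lies below it. Now take
\begin{equation*}
D \coloneqq \{(v(\psi),v(\chi)) : \psi\vee\chi\in\Sigma\}.
\end{equation*}
For these pairs the join in $H$ is $v(\psi\vee\chi)\in B$, so it agrees with the join in $A$. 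The inclusion $A\hookrightarrow H$ is then a bounded implicative semilattice embedding preserving the joins in $D$. By Proposition~\ref{prop: algebraic criterion for validating canonical formulas} (with $H_0=H$), this gives $H\nvDash\alpha(A,D)$. Since $H\vDash L$, we conclude $\alpha(A,D)\notin L$.

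The remaining step, and the one I expect to be the main obstacle, is to show $\alpha(A,D)\vdash\phi$ modulo $\mathsf{IPC}$, or dually that every algebra refuting $\phi$ refutes $\alpha(A,D)$. I would prove this semantically. Let $K$ be any s.i.\ algebra with a valuation $u$ such that $u(\phi)\neq 1$. Rerun the construction for $(K,u)$ to obtain a finite $A'$. The point to check is that the construction depends only on the $\{\wedge,\rightarrow\}$-type of the generators and on which disjunctions hold. The pair $(A,D)$ was extracted from one particular refutation, so in general the refutation pattern of $K$ need not match it.

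For this reason I would reorganise the argument rather than show that a single $\alpha(A,D)$ implies $\phi$. Let $\mathcal{S}$ be the finite (Diego bound) set, up to isomorphism, of all pairs $(A,D)$ arising from refutations of $\phi$. Show $\phi\leftrightarrow\bigwedge_{(A,D)\in\mathcal{S}}\alpha(A,D)$ over $\mathsf{IPC}$ in the following sense: an algebra validates $\phi$ iff it validates every $\alpha(A,D)$ with $(A,D)\in\mathcal{S}$. One direction is the construction above. For the converse, if $H\nvDash\alpha(A,D)$ via $h\colon A\to H_0$, define $u(p)\coloneqq h(v(p))$. An induction on $\Sigma$ gives $u(\chi)=h(v(\chi))$: the $\wedge,\rightarrow$ cases hold because $h$ is an implicative semilattice map, and the $\vee$ cases hold by the choice of $D$. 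Hence $u(\phi)=h(v(\phi))\leq h(o)\neq 1$, so $\phi$ fails in $H_0$ and therefore in $H$.

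Finally, $\phi\notin L$ forces some member of $\mathcal{S}$ to lie outside $L$. Conversely, each $\phi\in L$ is derivable from the members of $\mathcal{S}$ that belong to $L$. So $L$ is axiomatized by canonical formulas.
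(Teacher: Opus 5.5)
Your argument is correct and follows the standard route behind the result the paper cites rather than proves (Zakharyaschev; Bezhanishvili--Bezhanishvili, Cor.~5.10): using Diego's local finiteness and the algebraic criterion, each $\phi$ is shown to be equivalent as an axiom over $\mathsf{IPC}$ to the finitely many canonical formulas read off from its refutations, with the opremum added as a generator so that $A$ is subdirectly irreducible. The one step to state explicitly is that when $\phi\in L$, every member of $\mathcal{S}$ already lies in $L$ (apply your converse direction to algebras validating $L$); algebraic completeness of $\mathsf{IPC}\oplus\mathcal{S}$ then gives $\phi\in\mathsf{IPC}\oplus\mathcal{S}\subseteq\mathsf{IPC}\oplus\Gamma_L$.
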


For our purposes we will need an extension of Esakia duality to these classes of maps.

\begin{notation}
Given $P$ a poset, $Q$ a finite poset and let $f\colon P\rightharpoonup Q$ \emph{partially defined} map we will write $\domm(f)$ for its domain, and will write $f[{\uparrow}x]$ for the image under $f$ of all points in $\domm(f)\cap {\uparrow}x$. 
\end{notation}

\begin{definition}[Partial Esakia morphisms]\label{def:partial-esakia}
A partial map $f\colon P\to Q$ between finite posets is called a \emph{partial Esakia morphism} if the following conditions hold:\vspace{.2cm}
\begin{itemize}
    \item \makebox[3.5cm][l]{(Order-preserving)} If $p,p'\in \domm(f)$ and $p\leq p'$ then $f(p)\leq f(p')$.\vspace{.15cm}
    \item \makebox[3.5cm][l]{(Back-condition)} If $p\in \domm(f)$ and $f(p)\leq q$ then there is some $p'\geq p$ such that $p'\in \domm(f)$ and $f(p')=q$.\vspace{.15cm}
    \item \makebox[3.5cm][l]{(Domain definition)} $p\in \domm(f)$ if and only if $f[{\uparrow}p]={\uparrow}q$ for some $q\in Q$.\vspace{.2cm}
\end{itemize}
Such a map is called \emph{cofinal} if whenever $p\in P$ then there is some $p'\geq p$ such that $p'\in \domm(f)$.
\end{definition}

\begin{definition}[Closed domain condition]\label{def: closed domain condition}
    Let $Q$ be a finite rooted poset, $P$ a poset, and let $\mathfrak{D}$ be a collection (possibly empty) of antichains of $Q$. A cofinal partial Esakia morphism $f\colon P\to Q$ is said to satisfy the \emph{closed domain condition} (CDC) with respect to $\mathfrak{D}$ if:
    \begin{equation*}
        x\notin \domm(f) \implies \min f[{\uparrow}x]\notin \mathfrak{D}.
    \end{equation*}
\end{definition}

Given $P$ a finite rooted poset, $U,V\in \up(Q)$, we write:

\[
 \mathfrak{D}_{U,V}=
 \left\{C\subseteq U\cup V:
 \begin{array}{l}
 C \text{ is an antichain},\\
 C\cap(U\setminus V)\neq\varnothing,\\
 C\cap(V\setminus U)\neq\varnothing
 \end{array}\right\};
\]

then, given $D\subseteq \up(Q)^{2}$, we write
\begin{equation*}
    \mathfrak{D}_{D}\coloneqq \bigcup_{(U,V)\in D} \mathfrak{D}_{U,V}.
\end{equation*}

From all of this we extract the following criterion for a Heyting algebra\footnote{A careful observation of the proofs of \cite{BEZHANISHVILI2009} shows that the assumptions do not rely on finiteness of the poset $Q$; the version provided here therefore removes this assumption, since we will need the slight additional generality for Section \ref{sec: undecidability skortsov}.} to validate a canonical formula \cite[Lem.~3.40, Cor.~5.5]{BEZHANISHVILI2009}: 

\begin{theorem}[Validity of canonical formulas via posets]\label{thm: validity of canonical formulas via posets}
    For each finite poset $Q$, and canonical formula $\alpha(P,D)$ we have that $\up(Q)\nvDash \alpha(P,D)$ if and only if there is an upset $S\subseteq Q$, and an onto cofinal partial Esakia morphism $f\colon S\to P$ satisfying the CDC with respect to $\mathfrak{D}_{D}$.
\end{theorem}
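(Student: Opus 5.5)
We derive the statement from Proposition~\ref{prop: algebraic criterion for validating canonical formulas}, translated through the De Jongh--Troelstra duality of Proposition~\ref{prop: finite Esakia duality}. Throughout, $A=\up(P)$, whose opremum is $P\setminus\{r\}$ for the root $r$ of $P$.

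\textbf{Step 1 (homomorphic images are upsets).} Homomorphic images of $\up(Q)$ correspond to filters. For $Q$ finite, every filter of $\up(Q)$ is principal, generated by some upset $S$. The quotient is then isomorphic to $\up(S)$ via $U\mapsto U\cap S$. So a failure witness in Proposition~\ref{prop: algebraic criterion for validating canonical formulas} amounts to an upset $S\subseteq Q$ together with an embedding $h\colon\up(P)\to\up(S)$ of bounded implicative semilattices that preserves the joins listed in $D$.

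\textbf{Step 2 (dualising $h$).} Given such an $h$, define a partial map $f\colon S\rightharpoonup P$ as follows. For $x\in S$, put
\[
F_x=\{U\in\up(P): x\in h(U)\}.
\]
Since $h$ preserves $\wedge$ and $1$, $F_x$ is a filter of the finite algebra $\up(P)$, hence principal, say generated by $\bigcap F_x$. Set $f(x)=q$ exactly when $\bigcap F_x={\uparrow}q$, i.e.\ when $F_x$ is prime. We then check the three conditions of Definition~\ref{def:partial-esakia}.
\begin{itemize}
\item \emph{Order preservation.} If $x\le y$ then $F_x\subseteq F_y$, which gives ${\uparrow}f(y)\subseteq{\uparrow}f(x)$.
\item \emph{Back-condition.} This uses preservation of $\rightarrow$. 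If $f(x)\le q$ and no $x'\ge x$ had $f(x')=q$, then $x$ would belong to $h(({\uparrow}q)\to V)$ for a suitable upset $V$ not containing $q$; this contradicts $f(x)\le q$.
\item \emph{Domain definition.} One needs $f[{\uparrow}x]$ to be a principal upset precisely on the domain. This is the standard computation in \cite{BEZHANISHVILI2009}, using that $\bigcap F_x$ equals the upset generated by the minimal points of $f[{\uparrow}x]$.
\end{itemize}
Two further properties come from $h$ itself. Cofinality follows because maximal points of $S$ have prime filters $F_x$, so they lie in the domain. Surjectivity follows from injectivity of $h$: separating ${\uparrow}q$ from ${\uparrow}q\setminus\{q\}$ forces some $x$ with $f(x)=q$.

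For the CDC, take $x\notin\domm(f)$ and suppose $\min f[{\uparrow}x]\in\mathfrak{D}_{U,V}$ with $(U,V)\in D$. Then $x\in h(U\cup V)=h(U)\cup h(V)$. If, say, $x\in h(U)$, then every point of $f[{\uparrow}x]$ lies in $U$. But the antichain meets $V\setminus U$, a contradiction.

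\textbf{Step 3 (converse).} Given $S$ and a map $f$ as in the statement, put
\[
h(U)=\{x\in S: f[{\uparrow}x]\subseteq U\}.
\]
We verify that $h$ preserves $\wedge$, $\rightarrow$, $0$ and $1$:
\begin{itemize}
\item $\wedge$ is immediate.
\item $1$ is immediate.
\item $0$ uses cofinality, so that $f[{\uparrow}x]\neq\varnothing$ for every $x$.
\item $\rightarrow$ uses the back-condition and the domain definition.
\end{itemize}
Injectivity follows from surjectivity of $f$. Preservation of the joins in $D$ follows from the CDC by reversing the argument above: if $x\in h(U\cup V)$ lies in neither $h(U)$ nor $h(V)$, then $\min f[{\uparrow}x]$ is an antichain in $\mathfrak{D}_{U,V}$. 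Hence $x\in\domm(f)$, so $f[{\uparrow}x]={\uparrow}f(x)$ with $f(x)\in U\cup V$, which forces $x\in h(U)\cup h(V)$. Applying Proposition~\ref{prop: algebraic criterion for validating canonical formulas} finishes the proof.

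\textbf{Main obstacle.} The delicate part is matching the domain-definition clause exactly with preservation of $\rightarrow$, in both directions. I would follow \cite[Lem.~3.40]{BEZHANISHVILI2009} closely there. In that verification I would check that finiteness of $P$ alone suffices to make the filters $F_x$ principal, since $Q$ may later be infinite.
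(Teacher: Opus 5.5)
Your proposal is correct and follows essentially the route the paper relies on: the paper gives no independent proof, only the citation \cite[Lem.~3.40, Cor.~5.5]{BEZHANISHVILI2009}, and that cited argument is exactly what you reconstruct, namely Proposition~\ref{prop: algebraic criterion for validating canonical formulas} combined with the correspondence between $D$-join-preserving bounded implicative semilattice embeddings $\up(P)\to\up(S)$ and onto cofinal partial Esakia morphisms satisfying the CDC. The one step you defer, $\bigcap F_x={\uparrow}f[{\uparrow}x]$, does go through: take $y\ge x$ maximal with $y\notin h({\uparrow}f[{\uparrow}x])$, pick $q$ minimal in $\bigcap F_y$ with $q\notin{\uparrow}f[{\uparrow}x]$, and apply $\rightarrow$-preservation to ${\uparrow}q$ and $\bigcap F_y\setminus\{q\}$; this maximality argument, like Step~1 and cofinality, uses the finiteness of $Q$ and not merely that of $P$, which is available here since the statement assumes $Q$ finite.
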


\subsection{Medvedev posets}

\begin{definition}[Medvedev poset]
    Let $I$ be an arbitrary set. We denote by
    \begin{equation*}
        M_{I}\coloneqq (\mathcal{P}(I)\setminus \{\varnothing\}, \supseteq);
    \end{equation*}
    we refer to this as a \emph{topless Boolean algebra} or a \emph{Medvedev poset}. We write simply $M_{n}$ for $(\mathcal{P}([n])\setminus \{\varnothing\}, \supseteq)$.
\end{definition}

Medvedev posets enjoy a lot of unique properties, which will be used throughout this paper. We list them in the following easy Proposition:

\begin{proposition}[Facts about Medvedev posets]\label{prop: facts about medvedev posets}
    The following hold of Medvedev posets.
    \begin{enumerate}
        \item For each $x,y\in M_{n}$, the union $x\cup y\in M_{n}$, that is, $M_{n}$ is a (meet-)semilattice.
        \item \label{eq: hereditariness of Medvedev posets} For each $x\in M_{n}$, there is some $m\leq n$ such that ${\uparrow}x\cong M_{m}$.
    \end{enumerate}
\end{proposition}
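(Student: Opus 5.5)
Both items are proved by unwinding the definition of $M_{n}$, whose order is reverse inclusion: $x\leq y$ iff $x\supseteq y$. Throughout, $x,y$ are nonempty subsets of $[n]$. The arguments never use finiteness of $[n]$, so they work equally for $M_{I}$ with $I$ arbitrary. We will need that generality later, when dealing with $\sko$.

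For (i), I would first note that $x\cup y$ is a nonempty subset of $[n]$, because it contains $x\neq\varnothing$; hence $x\cup y\in M_{n}$. I would then check that it is the greatest lower bound of $x$ and $y$ for $\supseteq$.
\begin{itemize}
    \item It is a lower bound, since $x\cup y\supseteq x$ and $x\cup y\supseteq y$, i.e. $x\cup y\leq x,y$.
    \item It is the greatest one: if $z\leq x$ and $z\leq y$, then $z\supseteq x$ and $z\supseteq y$, so $z\supseteq x\cup y$, i.e. $z\leq x\cup y$.
\end{itemize}
So binary meets exist and are given by union, and $M_{n}$ is a meet-semilattice. It is worth recording in passing that joins may fail to exist: two disjoint sets have no common upper bound, since an upper bound would have to be a nonempty subset of both. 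This is why the topless Boolean algebra is only a semilattice and not a lattice.

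For (ii), fix $x\in M_{n}$ and set $m\coloneqq |x|$, so $1\leq m\leq n$. By definition,
\begin{equation*}
    {\uparrow}x=\{y\in M_{n} : x\supseteq y\}=\mathcal{P}(x)\setminus\{\varnothing\},
\end{equation*}
ordered by $\supseteq$. I would choose a bijection $\beta\colon x\to[m]$ and take the direct-image map $y\mapsto \beta[y]$. This map sends nonempty subsets of $x$ bijectively onto nonempty subsets of $[m]$, and both it and its inverse preserve inclusion. It is therefore an order isomorphism ${\uparrow}x\cong M_{m}$. In the general case of $M_{I}$, the same argument gives ${\uparrow}x\cong M_{x}$.

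Neither step presents a real obstacle. The only points needing care are the direction of the order (meets are unions, and principal upsets consist of subsets) and the exclusion of $\varnothing$. That exclusion is what makes nonemptiness automatic for unions in (i) and exactly matches the missing top in (ii).
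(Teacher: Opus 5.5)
Your proof is correct. The paper gives no argument here (it calls the proposition easy), and your direct check is exactly the routine verification it leaves to the reader: union is nonempty and is the meet for $\supseteq$, and ${\uparrow}x=\mathcal{P}(x)\setminus\{\varnothing\}\cong M_{|x|}$ via a relabelling of $x$ by $[|x|]$. Your side remark that finiteness is never used, so that ${\uparrow}x\cong M_{x}$ inside any $M_{I}$, is also accurate.
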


Using this Proposition we obtain the following sharpening of Theorem \ref{thm: validity of canonical formulas via posets}.

\begin{definition}[Medvedev cover]
    Given $P$ a finite rooted poset, and $D\subseteq \up(P)^{2}$ we denote by $\mathrm{Cov}_{\ML}(P,D)$ the statement: there is some $n\in \omega$ and a cofinal partial Esakia morphism onto $M_{n}\rightarrow P$ satisfying the CDC for $\mathfrak{D}_{D}$.
\end{definition}

\begin{theorem}[Validity of canonical formula for Medvedev frames]\label{thm: validity of canonical formula for Medvedev frames}
    For each canonical formula $\alpha(P,D)$, we have:
    \begin{equation*}
        \alpha(P,D)\notin \ML \iff \mathrm{Cov}_{\ML}(P,D).
    \end{equation*}
\end{theorem}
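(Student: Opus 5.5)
We combine the definition $\ML=\mathrm{Log}(\{M_n\mid n\in\omega\})$ with Theorem \ref{thm: validity of canonical formulas via posets}, and then use Proposition \ref{prop: facts about medvedev posets}\eqref{eq: hereditariness of Medvedev posets} to replace an arbitrary upset of $M_n$ by a whole Medvedev poset. Concretely, $\alpha(P,D)\notin\ML$ holds iff there is some $n$ with $\up(M_n)\nvDash\alpha(P,D)$. By Theorem \ref{thm: validity of canonical formulas via posets}, applied with $Q=M_n$ (finite), this holds iff there are an upset $S\subseteq M_n$ and an onto cofinal partial Esakia morphism $f\colon S\to P$ satisfying the CDC with respect to $\mathfrak{D}_D$.

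For the direction from right to left, suppose $\mathrm{Cov}_{\ML}(P,D)$ is witnessed by $f\colon M_n\to P$. Then we simply take $S=M_n$, which is trivially an upset of itself, and obtain $\up(M_n)\nvDash\alpha(P,D)$, hence $\alpha(P,D)\notin\ML$.

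For the direction from left to right, take $S$ and $f$ as above. The plan is to restrict to a principal upset. Since $f$ is onto and $P$ is rooted with root $r$, choose $x\in S$ with $f(x)=r$, and let $S'={\uparrow}x\subseteq S$. By Proposition \ref{prop: facts about medvedev posets}\eqref{eq: hereditariness of Medvedev posets}, $S'\cong M_m$ for some $m\le n$ (note that ${\uparrow}x$ in $M_n$, being $(\mathcal P(x)\setminus\{\varnothing\},\supseteq)$, is indeed $M_{|x|}$). Set $g=f\restriction S'$, with domain $\domm(f)\cap{\uparrow}x$.

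We then check the required properties of $g$, all of which hold because every condition in Definition \ref{def:partial-esakia} and in the CDC refers only to points above the point in question, and ${\uparrow}y\subseteq{\uparrow}x$ for all $y\in S'$:
\begin{itemize}
\item Order preservation is inherited from $f$.
\item The back-condition for $y\in\domm(g)$ produces witnesses $y'\ge y$, which lie in $S'$.
\item For the domain definition, $g[{\uparrow}y]=f[{\uparrow}y]$ for $y\in S'$, so $y\in\domm(g)$ iff $f[{\uparrow}y]$ is principal.
\item Cofinality is inherited, since the witnesses $y'\ge y$ lie in $S'$.
\item The CDC is inherited, since for $y\notin\domm(g)$ we have $\min g[{\uparrow}y]=\min f[{\uparrow}y]\notin\mathfrak{D}_D$.
\end{itemize}
Surjectivity follows because $x\in\domm(f)$ and $f(x)=r\le q$ for every $q\in P$, so the back-condition gives some $x'\ge x$ with $f(x')=q$. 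Transporting $g$ along the isomorphism $S'\cong M_m$ yields $\mathrm{Cov}_{\ML}(P,D)$.

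The main subtlety is making sure the point $x$ with $f(x)=r$ is actually in $\domm(f)$. This holds because $f$ is a partial map whose image is $P$, so any preimage of $r$ lies in the domain. One should also confirm that the definition of $\mathrm{Cov}_{\ML}$ intends surjectivity, which is how the word \emph{onto} in the definition is read here. Everything else is routine locality of the definitions.
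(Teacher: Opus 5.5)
Your proposal is correct and takes essentially the same approach as the paper. You pass through the poset criterion for canonical formulas to get a cover from an upset $S\subseteq M_n$, then restrict to ${\uparrow}x$ for a preimage $x$ of the root, noting that every condition, including the CDC, only involves points above, and finally use ${\uparrow}x\cong M_{|x|}$. You are slightly more explicit than the paper, for instance in deriving surjectivity of the restriction from the back-condition at $x$.
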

\begin{proof}
    If $\mathrm{Cov}_{\ML}(P,D)$ holds, then by Theorem \ref{thm: validity of canonical formulas via posets}, 
    for some $n$, $\up(M_{n})\nvDash \alpha(P,D)$. Conversely, suppose that $\alpha(P,D)\notin \ML$. Since $\ML$ is the logic of the finite Medvedev frames, there is some $M_{n}$ such that $\up(M_{n})\nvDash \alpha(P,D)$. Thus by the same theorem, there is an upset $S\subseteq M_{n}$ and a cofinal partial Esakia morphism onto $f\colon S\to P$.

    By surjectivity, let $p\in S$ be such that $f(p)=r$ where $r$ is the root of $P$. Note that by the back condition, then $f{\restriction}_{{\uparrow}p}\colon {\uparrow}p\to P$ is still a cofinal partial Esakia morphism (all conditions are hereditary). Moreover, if for some $x\in {\uparrow}p$, $x\notin \domm(f_{{\uparrow}p})$, then $x\notin \domm(f)$, and so $\min f[{\uparrow}x]\notin \mathfrak{D}_{D}$. By noting that $f[{\uparrow}x]=f_{{\uparrow}p}[{\uparrow}x]$ we conclude that the latter morphism also satisfies the CDC with respect to $\mathfrak{D}_{D}$. The proof of $\mathrm{Cov}_{\ML}(P,D)$ is then concluded by noting that by Proposition \ref{prop: facts about medvedev posets}.\eqref{eq: hereditariness of Medvedev posets}, ${\uparrow}p\cong M_{m}$ for some $m$. 
\end{proof}

\begin{remark}[Medvedev posets and simplices]

It is well-known that Medvedev posets $M_{n}$ can be interpreted as simplices. For each point $p\in M_{n}$, its depth (i.e., the maximum cardinality of chains above $p$) is the \emph{dimension} of the simplex. We therefore think of the singletons from $M_{n}$ as the \emph{vertices}, the elements of the second layer as the \emph{edges}, the elements of the third layer as the \emph{triangles}, and so on, with each element being understood as a \emph{face}.

This connection and perspective will be employed in the proof below to provide some intuition on the tiling constructions, as many of these constructions can be best understood from the point of view of geometric transformations.
\end{remark}

Similarly, we can obtain similar statements about Skvortsov's logic. For that purpose, we first recall the following fact:

\begin{theorem}[Infinite frame completeness for $\sko$]\label{thm: infinite frame completeness}
    The logic $\sko$ is complete with respect to $M_{\omega}$.
\end{theorem}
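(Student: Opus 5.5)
We must show that $\sko = \mathrm{Log}(M_\omega)$. Since $M_\omega$ is one of the frames $M_I$, the inclusion $\sko\subseteq\mathrm{Log}(M_\omega)$ is immediate. The work is the converse: if $M_I\nvDash\phi$ for some set $I$, then already $M_\omega\nvDash\phi$. The plan is to reduce an arbitrary $I$ to $I=\omega$ in two steps, first shrinking an uncountable $I$ to a countable one, and then handling the finite case.

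\textbf{Reduction to countable $I$.} We start from a valuation $V$ on $M_I$ and a point $x\in M_I$ with $x\nvDash\phi$. Put $J=\omega$ (and $J=I$ if $I$ is countable). We will exhibit a countable subset $I_0\subseteq I$ and use a Löwenheim--Skolem style argument, but it is cleaner to argue via canonical formulas.

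By the canonical formula axiomatization theorem, $\mathrm{Log}(M_\omega)$ is axiomatized over $\ipc$ by canonical formulas. So it suffices to show that every canonical formula $\alpha(P,D)$ refuted on some $M_I$ is refuted on $M_\omega$. The reason this suffices: suppose $\phi\notin\sko$. If we had $\phi\in\mathrm{Log}(M_\omega)$, then $\phi$ would be derivable from finitely many axioms $\alpha_i$ of $\mathrm{Log}(M_\omega)$. Each $\alpha_i$ holds on $M_\omega$, hence by the claim on every $M_I$. Then $\phi$ would hold on every $M_I$, contradicting $\phi\notin\sko$.

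To prove the claim, apply Theorem~\ref{thm: validity of canonical formulas via posets} (the footnote allows infinite $Q$). This gives an upset $S\subseteq M_I$ and an onto cofinal partial Esakia morphism $f\colon S\to P$ satisfying the CDC. Restricting to ${\uparrow}p$ for some $p\in f^{-1}(r)$, as in the proof of Theorem~\ref{thm: validity of canonical formula for Medvedev frames}, we may assume $S={\uparrow}p\cong M_{p}$. Since $P$ is finite, $f$ is determined by finitely many upsets $f^{-1}({\uparrow}q)$ of $M_p$.

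\textbf{Transfer to $M_\omega$.} We handle two cases.

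\emph{Finite $p$.} Here ${\uparrow}p\cong M_n$, and we must transfer to $M_\omega$. Consider the map $g\colon M_\omega\to M_n$, $g(X)=\{i\in n : \text{some } k\in X,\ k\equiv i \pmod n\}$, or more simply any surjection $\pi\colon\omega\to n$ with $g(X)=\pi[X]$. This $g$ is order preserving. It is also a p-morphism: if $\pi[X]\supseteq Y$, take $X'=X\cap\pi^{-1}[Y]\neq\varnothing$; then $X'\subseteq X$ and $\pi[X']=Y$. The composite $f\circ g$ is again a partial Esakia morphism. The domain condition holds because $g[{\uparrow}X]={\uparrow}g(X)$, so $(f\circ g)[{\uparrow}X]=f[{\uparrow}g(X)]$, which also gives the CDC and cofinality. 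Hence $M_\omega$ refutes $\alpha(P,D)$. An equally good route is to note that $\up(M_n)$ embeds as a complete subalgebra via $g^{-1}$, so $\mathrm{Log}(M_\omega)\subseteq\mathrm{Log}(M_n)$.

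\emph{Infinite $p$.} The same surjection trick works with any surjection $\pi\colon\omega\to p$ only when $p$ is countable. In general we instead build a finer quotient. Finitely many upsets $U_1,\dots,U_k$ of $M_p$ (the preimages) generate, under $\wedge,\rightarrow$ and the finitely many needed joins, the relevant structure. We want an equivalence on $p$ with countably many classes, such that the upsets are unions of fibers of the induced map $M_p\to M_{p/\!\sim}$. That is not generally possible, so we expect this to be the main obstacle.

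For this step I would first try a different route. Each $M_I$ is a p-morphic image of a disjoint union of ${\uparrow}$-cones, or more promisingly, the Medvedev-lattice semantics gives $\mathrm{Log}(M_I)=\mathrm{Log}(M_\omega)$ for infinite $I$. The argument: a refutation uses only a finite formula, and hence a finitely determined valuation. By a downward Löwenheim--Skolem argument in the first-order structure $(\mathcal{P}(I),\subseteq,V(p_1),\dots)$, we take a countable elementary Boolean subalgebra $B$ containing witnesses. Its Stone-type realization as sets, via an atomless or atomic countable Boolean algebra, is then embedded into $\mathcal{P}(\omega)$ via a p-morphism in the spirit of the $\pi$ construction.

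If that becomes too technical, the fallback is to cite Skvortsov~\cite{skvortsov1979logic}, where exactly this completeness with respect to $M_\omega$ is established.
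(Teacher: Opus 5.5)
The easy parts of your argument are correct. The inclusion $\sko\subseteq\mathrm{Log}(M_\omega)$ is immediate. When the refuting point $p\subseteq I$ is finite or countable, the surjective p-morphism $X\mapsto\pi[X]$ (or an isomorphism $M_p\cong M_\omega$) transfers the refutation. The detour through canonical formulas is not needed for this: if $M_I$ refutes $\phi$ at $p$, the generated subframe ${\uparrow}p\cong M_p$ already refutes $\phi$.

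The genuine gap is the case of uncountable $p$, i.e.\ showing $\mathrm{Log}(M_\omega)\subseteq\mathrm{Log}(M_I)$ for uncountable $I$. This is the whole content of the theorem, and neither of your two routes handles it.

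You rightly note that $f$ need not factor through a countable quotient $M_p\to M_{p/\sim}$; take, for instance, $f$ separating countable from uncountable subsets of $p$. The L\"owenheim--Skolem sketch does not fix this either. A countable elementary Boolean subalgebra $B$ of $\mathcal{P}(I)$, with the valuation as extra predicates, does yield a countable frame $(B\setminus\{0\},\supseteq)$ refuting $\phi$. But this frame is not of the form $M_J$. A Boolean embedding $B\hookrightarrow\mathcal{P}(\omega)$ only exhibits it as a \emph{subposet} of $M_\omega$, and refutations do not transfer along subposets. What you need is a (partial) p-morphism \emph{from} $M_\omega$ onto such a frame, compatible with the CDC.

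The obvious candidates for that map fail:
\begin{enumerate}
\item $B$ is not complete, so an arbitrary $X\subseteq\mathrm{At}(B)$ has no least cover in $B$.
\item The partial map $A_b\mapsto b$, with $A_b$ the set of atoms below $b$, is itself a partial Esakia morphism, but it does not compose with $f$ to one. A diagonal argument gives an infinite $X\subseteq\mathrm{At}(B)$ containing no infinite $A_c$. The only $c$ with $A_c\subseteq X$ are then finite joins of atoms, so the composite can send ${\uparrow}X$ onto a principal upset ${\uparrow}q$ (e.g.\ a single maximal point) even though $X$ lies outside its domain. This violates the domain condition.
\end{enumerate}

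As it stands, the only thing in your proposal that covers uncountable $I$ is the fallback citation. That is in fact how the paper proceeds: it gives no internal argument. It invokes Shehtman's theorem that $\sko$ is complete with respect to $(\mathsf{Cl}(X),\supseteq)$ for any $T_1$ space $X$ that is not countably compact, applied to the discrete space $\mathbb{N}$, whose nonempty closed sets are exactly $M_\omega$. With that citation your proof reduces to the paper's, and the canonical-formula and finite-case material becomes superfluous. Without it, the key transfer from an uncountable Medvedev frame to $M_\omega$ is missing.
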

\begin{proof}
    It was shown in \cite[Theorem 3.2]{Shehtman1986} that if $X$ is a  $T_{1}$ space which is not countably compact, then the logic $\sko$ is complete with respect to the Kripke frame $(\mathsf{Cl}(X),\supseteq)$. By taking the discrete space $\mathbb{N}$ we obtain the result.
\end{proof}

\begin{definition}[Skvortsov cover]
    Given $P$ a finite rooted poset, and $D\subseteq \up(P)^{2}$ we denote by $\mathrm{Cov}_{\ML}(P,D)$ the statement: there is a cofinal partial Esakia morphism onto $M_{\omega}\rightarrow P$ satisfying the CDC for $\mathfrak{D}_{D}$.
\end{definition}

\begin{theorem}[Validity of canonical formula for the infinite Medvedev frame]\label{thm: validity of canonical formula for skortsov frames}
    For each canonical formula $\alpha(P,D)$, we have:
    \begin{equation*}
        \alpha(P,D)\notin \sko \iff \mathrm{Cov}_{\sko}(P,D).
    \end{equation*}
\end{theorem}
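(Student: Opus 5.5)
The argument follows the proof of Theorem~\ref{thm: validity of canonical formula for Medvedev frames}, with Theorem~\ref{thm: infinite frame completeness} in place of the definition of $\ML$. (I read $\mathrm{Cov}_{\sko}(P,D)$ as the statement in the Skvortsov cover definition, whose label is written there as $\mathrm{Cov}_{\ML}$.) The plan is to prove the two directions separately, using Theorem~\ref{thm: validity of canonical formulas via posets} in the generality noted in the footnote, where finiteness of the domain poset is not required.

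For the right-to-left direction, assume $\mathrm{Cov}_{\sko}(P,D)$. Then there is an onto cofinal partial Esakia morphism $f\colon M_{\omega}\to P$ satisfying the CDC for $\mathfrak{D}_{D}$. Taking the upset $S=M_{\omega}$, Theorem~\ref{thm: validity of canonical formulas via posets} (for infinite $Q$) gives $\up(M_{\omega})\nvDash\alpha(P,D)$. Since $M_{\omega}$ is one of the frames in the definition of $\sko$, we get $\alpha(P,D)\notin\sko$.

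For the left-to-right direction, suppose $\alpha(P,D)\notin\sko$. By Theorem~\ref{thm: infinite frame completeness} we have $\up(M_{\omega})\nvDash\alpha(P,D)$. Theorem~\ref{thm: validity of canonical formulas via posets} then provides an upset $S\subseteq M_{\omega}$ and an onto cofinal partial Esakia morphism $f\colon S\to P$ satisfying the CDC. As in the finite case, choose $p\in S$ with $f(p)=r$, the root of $P$. Restrict to $f{\restriction}_{{\uparrow}p}$. All three clauses of Definition~\ref{def:partial-esakia} are hereditary for upsets, so the restriction is still a partial Esakia morphism. Cofinality is inherited because ${\uparrow}p$ is an upset. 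Surjectivity comes from the back-condition applied at $p$, since every $q\in P$ satisfies $r\le q$. The CDC is preserved because $f[{\uparrow}x]$ is the same computed in $S$ or in ${\uparrow}p$ whenever $x\in{\uparrow}p$. Finally, ${\uparrow}p=\{y\subseteq p: y\neq\varnothing\}\cong M_{p}$.

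The main obstacle is that $p$ may be finite, so that ${\uparrow}p\cong M_{m}$ for some finite $m$ rather than $M_{\omega}$. In that case I would precompose with the map $g\colon M_{\omega}\to M_{m}$ given by $g(y)=y\cap[m]$ when this set is nonempty, left undefined otherwise. To keep the composite a genuine partial Esakia morphism with the CDC, I would instead use the total map $g(y)=(y\cap[m])$ if nonempty and $\{m\}$ otherwise. This amounts to collapsing $\omega\setminus[m]$ onto the point $m$, via $\pi\colon\omega\to[m]$ with $\pi(i)=\min(i,m)$ and $g(y)=\pi[y]$. Image maps of surjections between index sets are p-morphisms of Medvedev posets: $g$ is monotone, and if $\pi[y]\supseteq z$ then $y'=y\cap\pi^{-1}[z]$ satisfies $y'\subseteq y$ and $\pi[y']=z$. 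Composing a total surjective p-morphism with $f{\restriction}_{{\uparrow}p}$ preserves each condition of Definition~\ref{def:partial-esakia}, because $(f\circ g)[{\uparrow}x]=f[{\uparrow}g(x)]$. For the same reason it preserves the CDC and cofinality, and surjectivity follows from surjectivity of $g$. If $p$ is infinite, then ${\uparrow}p\cong M_{\omega}$ directly (for countable $p$; $M_{\omega}$ is defined over a countable index set). Either way we obtain $\mathrm{Cov}_{\sko}(P,D)$.
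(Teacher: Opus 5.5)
Your proof is correct and follows the same route as the paper: completeness of $\sko$ for $M_\omega$, then the poset criterion (in the generality of its footnote), then restriction to ${\uparrow}p$ for some $p$ with $f(p)=r$, as in the $\ML$ case. The paper's proof is only a one-line pointer to that argument.

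Your addition is the case where $p$ is finite. Restricting to ${\uparrow}p$ then gives a cover from $M_{|p|}$, not from $M_\omega$ as $\mathrm{Cov}_{\sko}$ requires, and the criterion gives no control over which upset $S$ you get. Precomposing with the surjective p-morphism $g(y)=\pi[y]$ settles this, because $(f\circ g)[{\uparrow}y]=f[{\uparrow}g(y)]$ transfers every clause, including the CDC. The paper's proof leaves this step implicit, so your version is the more complete one.

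One slip. The intermediate map ``$y\cap[m]$ if nonempty, else $\{m\}$'' is not the same map as $\pi[y]$, and it is not order-preserving. Take $y$ that meets both $[m]\setminus\{m\}$ and $\omega\setminus[m]$ but omits $m$. Then $y\supseteq y\setminus[m]$, yet $g(y)=y\cap[m]\not\supseteq\{m\}=g(y\setminus[m])$. Your verification uses $\pi[y]$, which is the right map, so just drop the other description. It is also cleaner to take any surjection $\pi\colon\omega\to p$, so that $g$ lands directly in ${\uparrow}p$. That avoids reindexing and the off-by-one in $\pi(i)=\min(i,m)$.
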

\begin{proof}
    This follows with simpler arguments, as in Theorem \ref{thm: validity of canonical formula for Medvedev frames}, using Theorem \ref{thm: infinite frame completeness}. 
\end{proof}

\subsection{Tiling reductions}\label{sec: tiling reduction}

Tiling problems originate with Wang~\cite{Wang1961, Wang1963}.\footnote{In these papers, Wang was concerned with the un/decidability of the $\forall\exists\forall$-fragment of first-order logic, but also, as is less well-known but fun and here appropriate, with artificial intelligence. We cite: ``*insert most funny/appropriate citation*''} There are a number of different tiling problems, corresponding to and complete for various complexity and computability classes, see e.g. \cite{EmdeBoas1983,Harel1986}. We shall here be concerned with two: the \textit{(ordinary) tiling problem} and the \textit{periodic tiling problem}. Both decision problems have in common the definition of a \emph{(Wang) tile}, which is a square with coloured sides fixed in orientation, and both take as input a finite set of such tiles $\mathcal{W}$. The ordinary tiling problem then asks whether the plane $\mathbb{Z}\times \mathbb{Z}$ admits a tiling by copies of tiles from $\mathcal{W}$, with adjacent tiles matching in colour on common sides. The periodic tiling problem, in turn, does not ask whether the plane admits \textit{any} such tiling but whether it admits a \textit{periodic} one.

Since the periodic tiling problem is r.e.-complete (i.e. $\Sigma^0_1$-complete), we use it to obtain undecidability of $\ML$, as refutability in $\ML$ is recursively enumerable; and similarly, since the latter is co-r.e.-complete (i.e. $\Pi^0_1$-complete), we use it for undecidability of $\sko$. The pertinent definitions are as follows.

\begin{definition}[Wang tile]
    A \textit{(Wang) tile} is a tuple $t=(t_W, t_E, t_N, t_S)\in \mathbb{N}^4$. 
\end{definition}
\begin{definition}[Ordinary tilings]
    Given a finite set of tiles $\mathcal{W}\subseteq \mathbb{N}^4$, a \emph{$\mathcal{W}$-tiling} of the plane is a map $\tau: \mathbb{Z}\times \mathbb{Z}\to \mathcal{W}$ satisfying the horizontal and vertical compatibility conditions for all integers $k,l$:
    \begin{itemize}
        \item $\tau_E(k,l)=\tau_W(k+1,l)$ \hfill (horizontal matching)
        \item $\tau_N(k,l)=\tau_S(k,l+1)$ \hfill (vertical matching)
    \end{itemize}
\end{definition}

\begin{definition}[Periodic tilings]
    Given a finite set of tiles $\mathcal{W}\subseteq \mathbb{N}^4$, a \emph{periodic $\mathcal{W}$-tiling} consists of a pair of natural numbers $m,n$ and a map $\tau:\mathbb{Z}_m\times \mathbb{Z}_n\to \mathcal{W}$ satisfying the horizontal and vertical compatibility conditions where addition is computed modulo $m$ and $n$ in the first and second coordinate, respectively; in particular, 
    \begin{align*}
        \tau_E(m-1,l)=\tau_W(0,l)\qquad \text{and}\qquad \tau_E(k,n-1)=\tau_W(k,0).
    \end{align*}
\end{definition}

It is easy to see that the ordinary tiling problem is co-recursively enumerable. A celebrated result of Berger~\cite{Berger1966-BERTUO-6} shows that it is even co-r.e.-complete.

\begin{theorem}[The tiling problem]
    The ordinary tiling problem is co-r.e.-complete, so undecidable. That is, there is no Turing machine that, when given the specifications of a finite tile set $\mathcal{W}$, decides whether the plane admits a $\mathcal{W}$-tiling.
\end{theorem}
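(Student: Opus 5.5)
The statement has two halves, and I would treat them separately: an upper bound (the tiling problem is co-r.e.) and a lower bound (every co-r.e. set reduces to it). For the upper bound, I would show by a compactness argument that a finite tile set $\mathcal{W}$ tiles $\mathbb{Z}\times\mathbb{Z}$ if and only if it tiles every finite square $[-k,k]^2$.
\begin{enumerate}
\item The forward direction is immediate by restriction.
\item For the converse, I would consider the finitely branching tree whose level-$k$ nodes are the valid $\mathcal{W}$-tilings of $[-k,k]^2$, ordered by restriction. If every square is tileable, the tree is infinite.
\item By K\"onig's lemma the tree then has an infinite branch, and the union along that branch is a tiling of the plane.
\end{enumerate}
It follows that non-tileability is witnessed by some $k$ for which the finitely many candidate colourings of $[-k,k]^2$ all fail the matching conditions. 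This is a decidable check for each fixed $k$, so the complement of the tiling problem is r.e.

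For the lower bound, I would reduce the non-halting problem, which is co-r.e.-complete, to tileability. Given a Turing machine $T$, I would build computably a tile set $\mathcal{W}_T$ that tiles the plane iff $T$ does not halt on empty input. I would proceed in two stages.
\begin{enumerate}
\item \emph{Seeded version.} First I would handle the variant in which a designated seed tile is forced at the origin. The seed starts a row that encodes the initial configuration of $T$. Each row above encodes the next configuration: the horizontal colours carry tape symbols and the head/state marker, and the vertical colours enforce the local transition rule. No tile exists for a halting state. Hence a tiling exists iff the computation runs forever, the converse direction again using compactness.
\item \emph{Removing the seed.} Since the ordinary problem has no origin constraint, nothing stops a tiling from avoiding the seed altogether, or from containing garbage configurations. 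To force computations to start, I would superimpose the Turing-machine layer on Robinson's (or Berger's) aperiodic hierarchical tile set. That set forces every tiling to contain nested ``red'' squares of side $2^n+1$ for every $n$. Inside each such square I would run the seeded simulation on its free rows and columns, starting from the bottom edge of the square, for as many steps as the square allows.
\end{enumerate}
Because arbitrarily large squares occur in every tiling, any halting computation would eventually be simulated inside some square, where it would get stuck. Conversely, if $T$ never halts, the layered tiling exists.

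The main obstacle is this hierarchical construction. One has to verify that the Robinson tiles really do force the nested square structure in every tiling. One also has to arrange the ``obstruction'' signals so that each square simulates a fresh computation on a well-defined grid of free cells, without interference from the smaller squares nested inside it. I would follow Robinson's simplification of Berger's proof: start from his six basic tiles, which carry the parity and arm markings, establish the recursive $2^n$-square lemma by induction on $n$, and then check locally that the computation-layer colours are compatible with every tile of that layer.
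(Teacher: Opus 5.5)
Your proposal is correct and follows the standard argument. The paper gives no proof of its own: it remarks that co-r.e. membership is easy, which is your compactness/K\"onig's lemma argument, and cites Berger for hardness. Berger's proof is the same strategy you sketch via Robinson's simplification: a hierarchical aperiodic tile set that forces arbitrarily large squares in every tiling, overlaid with a Turing-machine layer, giving a reduction from non-halting.
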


Conversely, is it easy to see that the periodic tiling problem is recursively enumerable. \textcite{GurevichKoryakov1972} proved its r.e.-completeness.

\begin{theorem}[The periodic tiling problem]\label{thm: tiling problem is undecidable}
    The periodic tiling problem is r.e.-complete, so undecidable. That is, there is no Turing machine that, when given the specifications of a finite tile set $\mathcal{W}$, decides whether the plane admits a periodic $\mathcal{W}$-tiling.
\end{theorem}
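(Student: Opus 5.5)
The statement has two halves. Membership in r.e. is routine; r.e.-hardness is where the work lies. I would not reprove the hard direction from scratch but follow the strategy of \textcite{GurevichKoryakov1972}: a computable reduction from the halting problem for Turing machines on the empty tape. In outline, that reduction rests on \emph{frame-forcing} tiles, and I can only sketch how they are built. The frame construction below is a plan, not a verified argument.

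\emph{Membership in r.e.} Given $\mathcal{W}$, enumerate all triples $(m,n,\tau)$ with $\tau\colon \mathbb{Z}_m\times\mathbb{Z}_n\to\mathcal{W}$. For each triple, check the finitely many horizontal and vertical matching conditions modulo $m$ and $n$. Halt as soon as one triple passes. This procedure halts exactly on the tile sets admitting a periodic tiling.

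\emph{Hardness: the forward direction.} Given a deterministic machine $M$, I would computably build a tile set $\mathcal{W}_M$ in three layers.
\begin{enumerate}
\item Frame tiles draw a rectangle: a distinguished bottom-left corner tile, bottom, top and side border tiles, and corner tiles.
\item Computation tiles fill the interior. Each row encodes a configuration of $M$, in the usual way: horizontal colours carry head and state information, and vertical colours carry the tape symbols.
\item The bottom border may only sit below the initial blank configuration. The top border may only sit above a row containing a halting state.
\end{enumerate}
If $M$ halts after $t$ steps, take a rectangle of width about $t+2$ and height about $t+2$ containing the full computation. The outer colours of its four sides are chosen to match one another, so the rectangle tiles the torus $\mathbb{Z}_m\times\mathbb{Z}_n$. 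This gives a periodic $\mathcal{W}_M$-tiling.

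\emph{Hardness: the converse, and the main obstacle.} We must show that any periodic tiling forces $M$ to halt. The danger is degenerate tilings, for instance an infinite blank region, or computation rows looping vertically around the torus without ever meeting an initial row. I would try the following design.
\begin{enumerate}
\item Make every computation tile carry a pointer colour that propagates downward and leftward. A computation tile can then exist only if, following the pointers, one eventually reaches a bottom-left corner tile.
\item Make blank filler tiles impossible except inside frames.
\item In a finite torus, the downward pointer chain must terminate at a bottom border rather than cycle. To force this, I would attach to computation tiles a monotone signal (a ``still inside this frame'' bit) that can be switched off only by the top border and switched on only by the bottom border.
\item A vertical cycle of rows is then consistent only if it crosses a top border. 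Hence some frame contains a halting row, so $M$ halts.
\end{enumerate}
Making these local constraints airtight is the delicate step; it is exactly what Gurevich and Koryakov's construction achieves. If the above fails, I would fall back on Robinson-style hierarchical frames, which force frames of every size to appear. Since the halting problem is r.e.-complete, the reduction $M\mapsto\mathcal{W}_M$ then yields r.e.-completeness of the periodic tiling problem.
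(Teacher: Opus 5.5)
The paper gives no proof of this statement. It calls membership in r.e.\ easy and attributes r.e.-hardness to Gurevich and Koryakov. Your enumeration of triples $(m,n,\tau)$ is a correct proof of membership. To the extent that your hardness half is just a citation of Gurevich--Koryakov, it matches what the paper does. But the construction you sketch is not their argument, and the step you call ``delicate'' does not merely need polishing: it fails.

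Your design must contain a generic interior tile, since the interiors of arbitrarily large frames need it: blank tape symbol, no head, pointer colours ``continue down'' and ``continue left'', frame bit ``on''. This tile has equal west/east colours and equal south/north colours. So it alone gives a periodic tiling of $\mathbb{Z}_1\times\mathbb{Z}_1$, and $\mathcal{W}_M$ tiles periodically for every $M$. Your pointers and monotone bit only forbid changes of the wrong kind; nothing forces them to change. Hence nothing forces a bottom border, a head, or a halting row to occur. Likewise, ``blank filler only inside frames'' is not a local condition that matching rules can check.

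Your fallback does not close the gap by itself either. Robinson-type hierarchical tiles do force frames of every size, but, being aperiodic, they admit no periodic tiling at all. Your forward direction (halting $\Rightarrow$ periodic tiling) would then break. What is actually needed has two parts:
\begin{enumerate}
\item a structure that makes frames with a correct initial row compulsory in every tiling;
\item a device that lets a halting computation close this structure up into a periodic pattern.
\end{enumerate}
Supplying this is the real content of the Gurevich--Koryakov result. Either present the hardness half purely as a citation, as the paper does, or supply that ingredient. The sketch as written should not be offered as the proof strategy.
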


\section{Simplex of a tiling and tiling posets}\label{sec: tiling posets}

\begin{remark}[Restriction to surjective parity assignments]\label{rem: restriction}

For technical reasons we work with the following analogue of the periodic tiling problem: given four finite tile sets $\mathcal{W}_{i,j}$ for $i,j\in\{0,1\}$, decide whether there are is a periodic tiling $\tau:\mathbb{Z}_m\times \mathbb{Z}_n\to \bigcup \mathcal{W}_{i,j}$ with the additional constraint that, for each $i,j\in\{0,1\}$, the restriction of $\tau$ to 
\[
    \{(k,l)\in \mathbb{Z}_m\times \mathbb{Z}_n\mid k = i\mod 2,\; l=j\mod 2\}
\]
is a surjective function onto $\mathcal{W}_{i,j}$.

It is readily verified that this problem is r.e.-complete too. Similarly for the corresponding analogue for the ordinary tiling problem.

For notational convenience we henceforth denote $\mathcal{W}=\bigcup \mathcal{W}_{i,j}$ and typically talk about $\mathcal{W}$-tilings, rather than $\bigcup\mathcal{W}_{i,j}$-tilings.
\end{remark}

Our key construction will associate to a given set of Wang tiles $\W=\bigcup \mathcal{W}_{i,j}$ a poset $P_{\W}$, and a closed domain $\mathfrak{D}_{\W}$ such that:
\begin{equation}\label{eq:keyequation}
    \tag{$\dagger$} \W \text{ tiles the plane periodically }\iff \mathrm{Cov}_{\ML}(P_{\W},\mathfrak{D}_{\W}).
\end{equation}

From this the desired undecidability result follows:

\begin{proposition}\label{prop: entailment for undecidability}
    Assume that \eqref{eq:keyequation} holds. Then $\ML$ is undecidable.
\end{proposition}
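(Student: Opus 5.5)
The plan is a standard many-one reduction argument. Assume, towards a contradiction, that $\ML$ is decidable. Given a finite tile set $\W=\bigcup\mathcal{W}_{i,j}$ as in Remark \ref{rem: restriction}, we compute the finite rooted poset $P_{\W}$ and the closed domain. We use the data $D_{\W}\subseteq \up(P_{\W})^{2}$ underlying $\mathfrak{D}_{\W}$, so that $\mathfrak{D}_{\W}=\mathfrak{D}_{D_{\W}}$ in the notation before Theorem \ref{thm: validity of canonical formulas via posets}. From these we write down the canonical formula $\alpha(P_{\W},D_{\W})$.

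\emph{Step 1: the map is computable.} The first thing to check is that $\W\mapsto \alpha(P_{\W},D_{\W})$ is a computable function. This holds provided the construction of $P_{\W}$ and $D_{\W}$ from $\W$ is effective. That should be evident from the forthcoming explicit, finite combinatorial definition. From the finite poset one computes $\up(P_{\W})$ by brute force, together with its operations $\wedge,\rightarrow$, its opremum (the complement of the root), and the finitely many conjuncts of $\Delta$.

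\emph{Step 2: chaining the equivalences.} By \eqref{eq:keyequation} and Theorem \ref{thm: validity of canonical formula for Medvedev frames} we get
\[
\W \text{ tiles periodically} \iff \mathrm{Cov}_{\ML}(P_{\W},\mathfrak{D}_{\W}) \iff \alpha(P_{\W},D_{\W})\notin\ML .
\]
A decision procedure for $\ML$ would therefore decide the (surjective-parity variant of the) periodic tiling problem. That variant is r.e.-complete by Remark \ref{rem: restriction} and Theorem \ref{thm: tiling problem is undecidable}, hence undecidable, which is a contradiction.

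\emph{Main obstacle.} The only delicate point is bookkeeping: \eqref{eq:keyequation} is phrased with a collection of antichains $\mathfrak{D}_{\W}$, whereas canonical formulas take a set $D$ of pairs of upsets. One must check that $\mathfrak{D}_{\W}$ arises as $\mathfrak{D}_{D}$ for some computable $D$. Otherwise Theorem \ref{thm: validity of canonical formula for Medvedev frames} would have to be restated for arbitrary antichain families. I would handle this by defining $\mathfrak{D}_{\W}$ from the outset as $\mathfrak{D}_{D_{\W}}$.

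As a remark, the argument also shows that non-theoremhood in $\ML$ is r.e.-hard. Combined with the finite model property, which makes non-theoremhood r.e., this gives that $\ML$ is co-r.e.-complete.
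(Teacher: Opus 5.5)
Your proposal is correct and follows the paper's own argument. You compute the canonical formula $\alpha(P_{\W},D_{\W})$ from the tile set, chain \eqref{eq:keyequation} with Theorem~\ref{thm: validity of canonical formula for Medvedev frames} to get that $\W$ tiles periodically iff $\alpha(P_{\W},D_{\W})\notin\ML$, and conclude from the undecidability of the restricted periodic tiling problem. Your extra bookkeeping only spells out what the paper leaves implicit: the effectiveness of $\W\mapsto\alpha(P_{\W},D_{\W})$, and reading $\mathfrak{D}_{\W}$ as $\mathfrak{D}_{D_{\W}}$, which is already how Definition~\ref{def: tiling domain} sets it up.
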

\begin{proof}
    By Theorem \ref{thm: validity of canonical formula for Medvedev frames}, given any finite set of tiles $\W$ as input, whether $\mathrm{Cov}_{\ML}(P_{\W},\mathfrak{D}_{\W})$ holds is reducible to validity of a formula in $\ML$; thus by \eqref{eq:keyequation}, this provides a reduction of the tiling problem to validity in $\ML$, which is not possible by Theorem \ref{thm: tiling problem is undecidable}.
\end{proof}

To make the construction more understandable, we will proceed in several steps, approximating the goal, and proving some preliminary properties. Then in the next section we will supply all missing pieces to obtain the above equation. 

\begin{remark}[Adding fusions simplifies matters]
As mentioned in the introduciton,~\cite{Knudstorp2025} proves undecidability for a conservative extension of $\sko_\Box$. One may therefore wonder whether those results could be extended to cover the present undecidabiliy results. As the reader may guess from the fact that the present paper exists, it is our understanding that they do not.
\end{remark}

% Add somewhere in this section some explanation why the techniques from \cite{knudstorpundecidabilityrelevant} do not immediately work; motivating the difficulties, and the need for the technicalities can, I think help the reader contextualize the place where real difficulties happen.

\subsection{Encoding tiling}

To start, let us consider a tiling, as in Figure \ref{fig:periodicwangtiling}. 

\begin{figure}[h]
    \centering
\begin{tikzpicture}
\draw[help lines, color=gray!30, dashed] (-1.9,-1.9) grid (4.9,4.9);

\fill[blue!30] (0,0) -- (0.5,0.5) -- (1,0) -- (0,0) -- cycle;
\fill[green!30] (1,0) -- (1.5,0.5) -- (2,0) -- (1,0) -- cycle;
\fill[blue!30] (2,0) -- (2.5,0.5) -- (3,0) -- (2,0) -- cycle;
\fill[yellow!30] (3,0) -- (3.5,0.5) -- (4,0) -- (3,0) -- cycle;

\node at (4.5,1.5) {$\dots$};

\fill[red!30] (0,0) -- (0.5,0.5) -- (0,1) -- (0,0) -- cycle;
\fill[orange!30] (1,0) -- (1.5,0.5) -- (1,1) -- (1,0) -- cycle;
\fill[yellow!30] (2,0) -- (2.5,0.5) -- (2,1) -- (2,0) -- cycle;
\fill[red!30] (3,0) -- (3.5,0.5) -- (3,1) -- (3,0) -- cycle;

\fill[green!30] (0,1) -- (0.5,0.5) -- (1,1) -- (0,1) -- cycle;
\fill[blue!30] (1,1) -- (1.5,0.5) -- (2,1) -- (1,1) -- cycle;
\fill[orange!30] (2,1) -- (2.5,0.5) -- (3,1) -- (2,1) -- cycle;
\fill[blue!30] (3,1) -- (3.5,0.5) -- (4,1) -- (3,1) -- cycle;

\fill[orange!30] (1,0) -- (0.5,0.5) -- (1,1) -- (1,0) -- cycle;
\fill[yellow!30] (2,0) -- (1.5,0.5) -- (2,1) -- (2,0) -- cycle;
\fill[red!30] (3,0) -- (2.5,0.5) -- (3,1) -- (3,0) -- cycle;
\fill[green!30] (4,0) -- (3.5,0.5) -- (4,1) -- (4,0) -- cycle;

%%%

\fill[green!30] (0,1) -- (0.5,1.5) -- (1,1) -- (0,1) -- cycle;
\fill[blue!30] (1,1) -- (1.5,1.5) -- (2,1) -- (1,1) -- cycle;
\fill[orange!30] (2,1) -- (2.5,1.5) -- (3,1) -- (2,1) -- cycle;

\fill[blue!30] (3,1) -- (3.5,1.5) -- (4,1) -- (3,1) -- cycle;

%%%

\fill[blue!30] (0,2) -- (0.5,2.5) -- (1,2) -- (0,2) -- cycle;
\fill[green!30] (1,2) -- (1.5,2.5) -- (2,2) -- (1,2) -- cycle;
\fill[blue!30] (2,2) -- (2.5,2.5) -- (3,2) -- (2,2) -- cycle;

\fill[yellow!30] (3,2) -- (3.5,2.5) -- (4,2) -- (3,2) -- cycle;

\fill[green!30] (0,3) -- (0.5,3.5) -- (1,3) -- (0,3) -- cycle;
\fill[blue!30] (1,3) -- (1.5,3.5) -- (2,3) -- (1,3) -- cycle;
\fill[orange!30] (2,3) -- (2.5,3.5) -- (3,3) -- (2,3) -- cycle;

\fill[blue!30] (3,3) -- (3.5,3.5) -- (4,3) -- (3,3) -- cycle;

%%%SIDE PIECES%%%

\fill[orange!30] (0,1) -- (0.5,1.5) -- (0,2) -- (0,1) -- cycle;
\fill[orange!30] (1,1) -- (1.5,1.5) -- (1,2) -- (1,1) -- cycle;
\fill[yellow!30] (2,1) -- (2.5,1.5) -- (2,2) -- (2,1) -- cycle;
\fill[red!30] (3,1) -- (3.5,1.5) -- (3,2) -- (3,1) -- cycle;

\fill[red!30] (0,2) -- (0.5,2.5) -- (0,3) -- (0,2) -- cycle;
\fill[orange!30] (1,2) -- (1.5,2.5) -- (1,3) -- (1,2) -- cycle;
\fill[yellow!30] (2,2) -- (2.5,2.5) -- (2,3) -- (2,2) -- cycle;
\fill[red!30] (3,2) -- (3.5,2.5) -- (3,3) -- (3,2) -- cycle;

\fill[orange!30] (0,3) -- (0.5,3.5) -- (0,4) -- (0,3) -- cycle;
\fill[orange!30] (1,3) -- (1.5,3.5) -- (1,4) -- (1,3) -- cycle;
\fill[yellow!30] (2,3) -- (2.5,3.5) -- (2,4) -- (2,3) -- cycle;
\fill[red!30] (3,3) -- (3.5,3.5) -- (3,4) -- (3,3) -- cycle;

%%%TOP PARTS

\fill[blue!30] (0,2) -- (0.5,1.5) -- (1,2) -- (0,2) -- cycle;
\fill[green!30] (1,2) -- (1.5,1.5) -- (2,2) -- (0,2) -- cycle;
\fill[blue!30] (2,2) -- (2.5,1.5) -- (3,2) -- (2,2) -- cycle;
\fill[yellow!30] (3,2) -- (3.5,1.5) -- (4,2) -- (3,2) -- cycle;

\fill[green!30] (0,3) -- (0.5,2.5) -- (1,3) -- (0,3) -- cycle;
\fill[blue!30] (1,3) -- (1.5,2.5) -- (2,3) -- (0,3) -- cycle;
\fill[orange!30] (2,3) -- (2.5,2.5) -- (3,3) -- (2,3) -- cycle;
\fill[blue!30] (3,3) -- (3.5,2.5) -- (4,3) -- (3,3) -- cycle;

\fill[blue!30] (0,4) -- (0.5,3.5) -- (1,4) -- (0,4) -- cycle;
\fill[green!30] (1,4) -- (1.5,3.5) -- (2,4) -- (1,4) -- cycle;
\fill[blue!30] (2,4) -- (2.5,3.5) -- (3,4) -- (2,4) -- cycle;
\fill[yellow!30] (3,4) -- (3.5,3.5) -- (4,4) -- (3,4) -- cycle;

%%%SIDE PIECES

\fill[orange!30] (1,1) -- (0.5,1.5) -- (1,2) -- (1,1) -- cycle;
\fill[yellow!30] (2,1) -- (1.5,1.5) -- (2,2) -- (2,1) -- cycle;
\fill[red!30] (3,1) -- (2.5,1.5) -- (3,2) -- (3,1) -- cycle;
\fill[green!30] (4,1) -- (3.5,1.5) -- (4,2) -- (4,1) -- cycle;

\fill[orange!30] (1,2) -- (0.5,2.5) -- (1,3) -- (1,2) -- cycle;
\fill[yellow!30] (2,2) -- (1.5,2.5) -- (2,3) -- (2,2) -- cycle;
\fill[red!30] (3,2) -- (2.5,2.5) -- (3,3) -- (3,2) -- cycle;
\fill[green!30] (4,2) -- (3.5,2.5) -- (4,3) -- (4,2) -- cycle;

\fill[orange!30] (1,3) -- (0.5,3.5) -- (1,4) -- (1,3) -- cycle;
\fill[yellow!30] (2,3) -- (1.5,3.5) -- (2,4) -- (2,3) -- cycle;
\fill[red!30] (3,3) -- (2.5,3.5) -- (3,4) -- (3,3) -- cycle;
\fill[green!30] (4,3) -- (3.5,3.5) -- (4,4) -- (4,3) -- cycle;

\draw[->,ultra thick] (-2,0)--(5,0) node[right]{$x$};
\draw[->,ultra thick] (0,-2)--(0,5) node[above]{$y$};

\draw[thick] (1,0)--(1,5);

\draw[thick] (0,1)--(5,1);

\draw[thick] (2,0)--(2,5);
\draw[thick] (0,2)--(5,2);

\draw[thick] (3,0)--(3,5);
\draw[thick] (0,3)--(5,3);

\draw[thick] (4,0)--(4,5);
\draw[thick] (0,4)--(5,4);

\node at (1,-0.3) {$x_{1}$};

\node at (2,-0.3) {$x_{2}$};

\node at (3,-0.3) {$x_{3}$};

\node at (4,-0.3) {$x_{4}$};

\node at (-0.3,1) {$y_{1}$};

\node at (-0.3,2) {$y_{2}$};

\node at (-0.3,3) {$y_{3}$};

\node at (-0.3,4) {$y_{4}$};

\end{tikzpicture}    \caption{Part of a periodic Wang tiling on the positive plane}
    \label{fig:periodicwangtiling}
\end{figure}

To be able to prove \eqref{eq:keyequation}, we need to record several pieces of data:
\begin{enumerate}
    \item the compatibility structure of the tiles, as well as
    \item whether or not tiling is possible, and
    \item make it so that the existence of a tiling can be turned into a p-morphic image out of a Medvedev frame, and vice-versa.
\end{enumerate}

To obtain some intuition about this procedure, let us begin by assuming that $\mathcal{W}$ tiles the plane periodically. As mentioned in Section \ref{sec: tiling reduction}, a periodic tiling can be formally represented by a function $\tau\colon \mathbb{Z}_{m}\times \mathbb{Z}_{k}\to \W$, also called the ``periodic rectangle" of the tiling. To encode it, we can take the points on each axis, from which the pairs are recoverable. But to talk about ``adjacency", we additionally record the \emph{parity} of the points.

\begin{definition}[Simplex of a tiling $\tau$]\label{def: simplex of tiling}
    For each set of Wang tiles, and each periodic tiling $\tau\colon \mathbb{Z}_{m}\times \mathbb{Z}_{k}\to \W$, we define the horizontal sets:
    \begin{equation*}
        \Ho_{0}\coloneqq \{n\in \mathbb{Z}_{m} : n \text{ is even}\} \text{ and } \Ho_{1}\coloneqq \{n\in \mathbb{Z}_{m} : n \text{ is odd} \};
    \end{equation*}
    similarly we define:
    \begin{equation*}
        \V_{0}\coloneqq \{n\in \mathbb{Z}_{k} : n \text{ is even}\} \text{ and } \V_{1}\coloneqq \{n\in \mathbb{Z}_{k} : n \text{ is odd} \}.
    \end{equation*}
    We let $\Omega_{\tau}\coloneqq \Ho_{0}\sqcup \Ho_{1}\sqcup \V_{0}\sqcup \V_{1}$, and let $\til(\Omega)\coloneqq \mathcal{P}(\Omega_{\tau})\setminus \{\varnothing\}$ be the Medvedev frame, called the \emph{simplex of a tiling}  $\tau$.
\end{definition}

\begin{notation}[Coordinate-parity assignment]
    For a point $n\in \Omega_{\tau}$ we let $\mathrm{cor}(n)\in \{\Ho_{0},\Ho_{1},\V_{0},\V_{1}\}$ be the unique set to which it belongs. We denote by $\mathrm{cor}[F]$ the usual extension of this to subsets.
\end{notation}

The idea of the simplex of a tiling is that one can look inside of $\til(\Omega)$ to obtain information about the tiling. Namely:
\begin{enumerate}
    \item The doubletons $\{n,n'\}$ where $n\in \mathbb{Z}_{m},n'\in \mathbb{Z}_{k}$ record \emph{tiling information} -- they tell us precisely to which tile the pair is sent to.
    \item On the other hand the doubletons $\{n,n'\}$ where exactly one of the two elements is in $\Ho_{0}$ and the other in $\Ho_{1}$ (and similarly for vertical sets) capture the \emph{successor relations} between points: either this is a set of the form $\{n,n+1\}$, in one of the two possible ways it can come to be, or it is not.
\end{enumerate}

\begin{remark}[Not all faces have obvious meaning]
    Of course there are other faces in the simplex whose meaning is less clear: a subset $C\subseteq \Ho_{0}$ does not obviously have any role in the tiling. Nevertheless, as we will see, such faces play an important role in encoding the possibility of having a periodic tiling.
\end{remark}

Once we know what information needs to be kept, we will restrict essentially to a (quotient of) a sub-poset of $\til(\Omega)$, obtaining the desired reduction. With this in mind, we introduce the following construction:

\begin{definition}[Tiling poset of $\W$]
    Given a set $\W$ of tiles, the poset $P_{\W}$ contains the following points:
    \begin{enumerate}
        \item (Horizontal parity points) Two maximal points $\Ho_{0}$ and $\Ho_{1}$;
        \item (Vertical parity points) Two maximal points $\V_{0}$ and $\V_{1}$.
        \item (Horizontal successor points) Three points $\R_{t}$, $\R_{01}$ and $\R_{10}$ that have exactly $\Ho_{0}$ and $\Ho_{1}$ as its immediate successors.
        \item (Vertical successor points) Three points $\So_{t}$, $\So_{01}$ and $\So_{10}$ that have exactly $\V_{0}$ and $\V_{1}$ as its immediate successors.
        \item (Tile points) For each $i, j\in \{1,0\}$ and each $t\in \W_{i,j}$ a tile point $\T_{i,j,t}$ that has $\Ho_{i}$ and $\V_{j}$ as its immediate successors.
        \item (Simplex points) For each maximal point $m$, a simplex point $s_{m}$ which has only $m$ as its immediate successor;
        \item (Cut point) For each maximal point $m$, a cut point $c_{m}$ containing only $s_{m}$ as its successor.
        \item (Root) A root element $r$ which is below all other points.
    \end{enumerate}
\end{definition}

For $\mathcal{W}=\{t\}$ a singleton tile, the construction looks as in Figure \ref{fig:tilingposetsingletile} (all points below the root; most edges removed for clarity).

\begin{figure}[h]
    \centering

    \begin{tikzpicture}
        \node (h0) at (0,0) {$\Ho_{0}$};
        \node (s0) at (-1,-1) {$s_{\Ho_0}$};
        \node (c0) at (-1,-2) {$c_{\Ho_0}$};

        \node (h1) at (4,0) {$\Ho_{1}$};
        \node (s1) at (3,-1) {$s_{\Ho_1}$};
        \node (c1) at (3,-2) {$c_{\Ho_1}$};

        \node (r00) at (0,-1) {$\R_{00}$};
        \node (r01) at (1,-1) {$\R_{01}$};
        \node (r10) at (2,-1) {$\R_{10}$};

        \draw (h0) -- (r00) -- (h1);
        \draw (h0) -- (r01) -- (h1);
        \draw (h0) -- (r10) -- (h1);

        \node (v0) at (8,0) {$\V_{0}$};
        \node (sv0) at (9,-1) {$s_{\V_0}$};
        \node (cv0) at (9,-2) {$c_{\V_0}$};

        \node (v1) at (12,0) {$\V_{1}$};
        \node (sv1) at (13,-1) {$s_{\V_1}$};
        \node (cv1) at (13,-2) {$c_{\V_1}$};

        \node (s00) at (12,-1) {$\So_{00}$};
        \node (s01) at (11,-1) {$\So_{01}$};
        \node (s10) at (10,-1) {$\So_{10}$};

        \draw (v0) -- (s00) -- (v1);
        \draw (v0) -- (s01) -- (v1);
        \draw (v0) -- (s10) -- (v1);

        \draw (h0) -- (s0) -- (c0);
        \draw (h1) -- (s1) -- (c1);
        \draw (v0) -- (sv0) -- (cv0);
        \draw (v1) -- (sv1) -- (cv1);

        \node (t00) at (4.5,-1) {$\T_{00}$};
        \node (t01) at (5.5,-1) {$\T_{01}$};
        \node (t10) at (6.5,-1) {$\T_{10}$};
        \node (t11) at (7.5,-1) {$\T_{11}$};

\draw (h0) -- (t00) -- (v0);
\draw (h0) -- (t01) -- (v1);
\draw (h1) -- (t10) -- (v0);
\draw (h1) -- (t11) -- (v1);

\node (r) at (6,-3) {$r$};

\draw (c0) -- (r) -- (c1) -- (r) -- (cv0) -- (r) -- (cv1);

    \end{tikzpicture}    \caption{Poset $P_{\W}$ for $\W=\{t\}$}
    \label{fig:tilingposetsingletile}
\end{figure}

The terminology chosen reflects the roles that each kind of point has. Such roles are explained in part by how one then constructs the map from $\til(\Omega)$ back to $P_{\W}$. The following formalizes the idea of ``recording data" just made:

\begin{definition}[Folding morphism on $\tau$]\label{def: folding morphism}
    Given a tiling $\tau$ of the set of tiles $\W$, we define the folding map $f_{\tau}\colon \til(\Omega)\to P_{\W}$ by induction, as follows:
    \begin{enumerate}
        \item (Singleton) For each singleton $\{n\}$, $f_{\tau}(\{n\})=\mathrm{cor}(n)$;
        \item For each doubleton $D=\{n,n'\}$:
        \begin{enumerate}
            \item If $f_{\tau}(\{n\})=\Ho_{0}$ and $f_{\tau}(\{n'\})=\Ho_{1}$, then:
            \begin{enumerate}
                \item $f_{\tau}(D)=\R_{01}$ if $n'=n+1$ $\mod {m}$;
                \item $f_{\tau}(D)=\R_{10}$ if $n=n'+1 \mod {m}$;
            \item $f_{\tau}(D)=\R_{t}$ otherwise.\end{enumerate}
            
        \item If $f_{\tau}(\{n\})=\V_{0}$ and $f_{\tau}(\{n'\})=\V_{1}$, then:
        \begin{enumerate}
            \item $f_{\tau}(D)=\So_{01}$ if $n'=n+1 \mod {k}$;
            \item $f_{\tau}(D)=\So_{10}$ if $n=n'+1 \mod {k}$;
        \item $f_{\tau}(D)=\So_{t}$ otherwise.
        \end{enumerate}
        \item If $f_{\tau}(\{n\})=\Ho_{i}$ and $f_{\tau}(\{n'\})=\V_{j}$ then let $t\coloneqq \tau(n,n')$ and map $f_{\tau}(D)=\T_{i,j,t}$.
\end{enumerate}
\item If $F\subseteq \Omega$ is any face such that $|F|>1$ (including a doubleton), such that $|\mathrm{cor}[F]|=T$ for $T\in \{\Ho_{0},\Ho_{1},\V_{0},\V_{1}\}$, then:
\begin{enumerate}
    \item If $F$ contains all points sent to $T$, then $f_{\tau}(F)=c_{T}$;
    \item Otherwise $f_{\tau}(F)=d_{T}$.
\end{enumerate}  
        \item $f_{\tau}(\Omega)=r$.
        \item $f_{\tau}$ is not defined anywhere else on the domain of $\til(\Omega)$.
    \end{enumerate}
\end{definition}

In other words, the points $\R$ record the information about horizontal successors, with the extra point $\R_{t}$ serving as a ``trash point"; similarly the points $\So$ record information about vertical successors; and the tile points record the structure of the tiling.

\begin{remark}[Simplex and cut points]
    
    The role of the simplex and cut points is more involved, and is best understood by thinking of the map $f_{\tau}$ as providing a \emph{coloring} to the simplex, with the top labels $\Ho_{i}$ and $\V_{j}$ serving as the colors.

    Under this reading, the points which get mapped to the \emph{simplex} point are exactly the monochromatic simplices which are neither maximal nor degenerate. Those which get sent to the \emph{cut} point are the maximal monochromatic simplices. 
    
    Having such points buys us expressive power in the tiling: since we can talk about monochromatic faces, if we ensure certain faces \emph{omit} any non-trivial monochromatic simplex, this can force us to identify certain points. On the other hand,    
    since we can talk about the largest monochromatic simplex, we can force the existence of points within a given prescribed successor relation. 
\end{remark}

We record the following basic fact about this construction:
\begin{proposition}[Domain definition]\label{prop: domain definition}
    For each set $\W$ of tiles, $\tau$ a tiling and $F\in \til(\Omega)$ we have that $F\in \domm(f_{\tau})$ if and only if one of the following holds:
    \begin{enumerate}
        \item $|F|\leq 2$;
        \item $|F|>2$ and $\mathrm{cor}[F]$ is a singleton.
        \item $F=\Omega$.
    \end{enumerate}
\end{proposition}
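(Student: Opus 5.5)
This is a direct unwinding of Definition~\ref{def: folding morphism}. The plan is to show that each clause of that definition assigns a value exactly to the faces in one of the three listed classes, and that the final clause leaves every other face undefined.

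We begin with the ``if'' direction and go class by class.

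\textbf{Faces with $|F|\leq 2$.} Singletons are always assigned a value by the singleton clause. For doubletons we split on $\mathrm{cor}[F]$.
\begin{itemize}
\item If $\mathrm{cor}[F]=\{\Ho_0,\Ho_1\}$, the horizontal successor clause assigns one of $\R_{01},\R_{10},\R_t$.
\item If $\mathrm{cor}[F]=\{\V_0,\V_1\}$, the vertical successor clause assigns one of $\So_{01},\So_{10},\So_t$.
\item If $\mathrm{cor}[F]=\{\Ho_i,\V_j\}$, the tile clause assigns $\T_{i,j,\tau(n,n')}$.
\item If $\mathrm{cor}[F]$ is a singleton, the monochromatic clause applies, since it explicitly includes doubletons.
\end{itemize}
These four cases exhaust all doubletons. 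The tile case needs $n\in\mathbb{Z}_m$ and $n'\in\mathbb{Z}_k$, and this holds because $\mathrm{cor}$ records which of the four disjoint sets a point lies in. Two doubleton cases are only described with an ordered labelling, namely ``$f_\tau(\{n\})=\Ho_0$, $f_\tau(\{n'\})=\Ho_1$'' and its vertical analogue. We handle this by choosing the enumeration of $D=\{n,n'\}$ so that it matches the clause.

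\textbf{Faces with $|F|>2$ and $\mathrm{cor}[F]$ a singleton $T$.} The monochromatic clause assigns $c_T$ or the other simplex-type point, according to whether $F$ contains all points sent to $T$. Here we read the condition ``$|\mathrm{cor}[F]|=T$'' as $\mathrm{cor}[F]=\{T\}$.

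\textbf{The face $\Omega$.} The root clause assigns $r$ to $\Omega$.

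For the ``only if'' direction we use the last clause of the definition: $f_\tau$ is defined nowhere else. So we only need to check that each assigning clause applies to faces within the three classes. The singleton and doubleton clauses give $|F|\leq 2$, the monochromatic clause gives case (ii) whenever $|F|>2$, and the root clause gives (iii).

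There is one point to address in passing. The definition is phrased ``by induction'', and some faces could a priori be caught by two clauses, for instance a monochromatic doubleton, or $\Omega$ if $\mathrm{cor}[\Omega]$ were a singleton. For the domain statement this does not matter: such a face is defined either way.

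We expect the main obstacle to be bookkeeping rather than mathematics. It consists of making sure the cases for doubletons are exhaustive, since every pair of colours from $\{\Ho_0,\Ho_1,\V_0,\V_1\}$ must be covered, and of the harmless notational slips in the definition noted above.
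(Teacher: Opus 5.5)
Your proposal is correct and takes the same approach as the paper, which states this proposition without proof as an immediate consequence of Definition~\ref{def: folding morphism}, in particular its final clause that $f_\tau$ is defined nowhere else. One small detail you could add is that the tile clause yields a genuine point of $P_{\W}$: by Remark~\ref{rem: restriction}, $\tau$ sends each parity class $(i,j)$ into $\W_{i,j}$, so $\T_{i,j,\tau(n,n')}$ exists.
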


With the given definition of the folding morphism on a tiling we can easily prove:

\begin{proposition}[Folding map yields a cofinal partial Esakia morphism]\label{prop: folding map partial esakia}
    For each set $\W$ of tiles, and $\tau\colon \mathbb{Z}_{m}\times\mathbb{Z}_{k}\to \W$ a tiling where $m,k\geq 6$\footnote{As remarked in the next section, this assumption is purely technical and can be removed without affecting the core argument of the proof.}, the folding map $f_{\tau}$ is a cofinal partial Esakia morphism onto $P_{\W}$.
\end{proposition}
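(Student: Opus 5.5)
We verify the four clauses of Definition~\ref{def:partial-esakia} for $f_\tau$ directly, together with cofinality and surjectivity onto $P_{\W}$. We use Proposition~\ref{prop: domain definition} as the description of $\domm(f_\tau)$.

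\textbf{Conventions.} Read clause (3) of Definition~\ref{def: folding morphism} as follows: a monochromatic face of colour $T$ with $|F|>1$ goes to the cut point $c_T$ if $F$ is the whole colour class of $T$, and to the simplex point $s_T$ otherwise. The order on $\til(\Omega)$ is $\supseteq$. We also assume $m,k$ are even, so that parity is well defined modulo $m,k$; this can always be arranged by doubling the period. Then each colour class has at least $3$ elements, since $m,k\geq 6$. Consequently a monochromatic doubleton is never a whole colour class and is always sent to some $s_T$.

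\textbf{Cofinality and surjectivity.} Cofinality is immediate, since every face lies above (i.e.\ contains) a singleton, and singletons are in the domain. For surjectivity we exhibit preimages.
\begin{itemize}
\item The maxima $\Ho_i,\V_j$ are hit by singletons.
\item $\R_{01}$ and $\R_{10}$ are hit by $\{0,1\}$ and $\{2,1\}$ in $\mathbb{Z}_m$.
\item The trash point $\R_t$ is hit by $\{0,3\}$. This is the place where $m\geq 6$ is used: $0$ and $3$ must be non-adjacent modulo $m$. The points $\So_{01},\So_{10},\So_t$ are handled identically.
\item Every tile point $\T_{i,j,t}$ is hit by some $\{n,n'\}$, by the surjectivity requirement of Remark~\ref{rem: restriction}.
\item $s_T$ is hit by a monochromatic doubleton, $c_T$ by the whole colour class, and $r$ by $\Omega$.
\end{itemize}

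\textbf{Order preservation.} This is a case split on the pair $F\supseteq G$, both in the domain.
\begin{itemize}
\item If $G=\{n\}$, then $f_\tau(G)=\mathrm{cor}(n)$. Every point assigned to a face containing $n$ lies below $\mathrm{cor}(n)$, by the definition of the immediate successors of $\R,\So,\T,s,c$.
\item If $G$ is a monochromatic face with $|G|>1$, then $F$ is either monochromatic of the same colour or equal to $\Omega$. The only nontrivial instance is $F$ the whole colour class and $G$ a proper subface, and there $c_T\leq s_T$ holds.
\item If $G$ is a two-coloured doubleton, then $F=G$ or $F=\Omega$, and both are trivial.
\end{itemize}

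\textbf{Back condition.} Given $F\in\domm(f_\tau)$ and $q\geq f_\tau(F)$, we find $G\subseteq F$ in the domain with $f_\tau(G)=q$.
\begin{itemize}
\item For $F=\Omega$ this is exactly surjectivity.
\item For a doubleton, the strict successors are its own singletons' images.
\item For a monochromatic $F$ sent to $s_T$ or $c_T$, the strict successors are $T$, attained by a singleton, and, in the case of $c_T$, also $s_T$, attained by a doubleton inside $F$.
\end{itemize}

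\textbf{Domain definition.} One direction follows from the previous two steps: for $F$ in the domain, $f_\tau[{\uparrow}F]={\uparrow}f_\tau(F)$. For the converse, take $F$ outside the domain, so $|F|\geq 3$, $F$ has at least two colours, and $F\neq\Omega$. Then $f_\tau[{\uparrow}F]$ contains two distinct maxima. Moreover, either $F$ has three colours, or some colour is repeated in $F$, in which case $s_T\in f_\tau[{\uparrow}F]$ via a monochromatic doubleton. In both cases the only point of $P_{\W}$ below all of these is $r$. But $r\notin f_\tau[{\uparrow}F]$, since only $\Omega$ is sent to $r$. Hence the image is not a principal upset.

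I expect the main obstacle to be the bookkeeping rather than any single idea. Specifically:
\begin{itemize}
\item making sure that the hypotheses ``$m,k$ even and $\geq 6$'' are exactly what guarantee that colour classes have at least $3$ elements and that non-adjacent opposite-parity pairs exist, which is needed for $\R_t$ and $\So_t$ to be hit;
\item checking exhaustively, in the domain-definition step, that no point other than $r$ lies below two distinct maxima together with some $s_T$.
\end{itemize}
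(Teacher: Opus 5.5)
Your proof is correct and follows the same route as the paper's: surjectivity via explicit preimages (using $m,k\geq 6$ and the surjectivity built into Remark~\ref{rem: restriction}), order-preservation, the back condition, the domain condition, and cofinality via singletons. You also write out the half of the domain condition that the paper dismisses as following ``at once'' from Proposition~\ref{prop: domain definition}, namely that faces outside the domain have non-principal image because only $r$ lies below the relevant maxima and $s_T$, while only $\Omega$ is sent to $r$. Your evenness assumption is harmless, since it is already implicit in the paper's use of parity modulo $m$ and $k$.
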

\begin{proof}
    The fact that the map is onto follows from an easy case analysis, noting that the simplex $\til(\Omega)$ must contain points in all the given positions, since $m,k\geq 6$, and since, by Remark \ref{rem: restriction}, only the tile points which arise in the tiling $\tau$ are put in the poset $\til(\Omega)$. 

Moreover, the map is order-preserving and satisfies the back-condition. The domain condition follows at once from Proposition \ref{prop: domain definition}. Finally, the map is cofinal, since all of the singletons are in the domain.    
\end{proof}

\begin{remark}[Remarks on partiality]
Proposition \ref{prop: folding map partial esakia} and its elementary proof is one of the key advantages of working with Zakharyaschev's canonical formulas: instead of having to define the function $f_{\tau}$ on a \emph{whole structure}, it is possible to define it only on the parts which carry the most relevant geometric information.
\end{remark}

\subsection{Enforcing successors and compatibility}

So far, despite the structure $\Omega_{\tau}$ depending on the period of $\tau$, the fact that $\tau$ is indeed a tiling is not properly encoded, as the next example shows:

\begin{example}[Missing compatibility]\label{ex: missing compatibility}
Consider a tripleton $\{n,n',n''\}$. By focusing on its face poset we obtain, for example, the picture of Figure \ref{fig:compatibilitytriple}:

\begin{figure}[h]
    \centering

    \begin{tikzpicture}
        \node at (0,0) {$\bullet$};
        \node at (0,0.3) {$n$};
        \node at (-1.5,-2.5) {$\bullet$};
        \node at (-1.8,-2.8) {$n'$};
        \node at (1.5,-2.5) {$\bullet$};
        \node at (1.8,-2.8) {$n''$};

        \draw (0,0) -- (-1.5,-2.5) -- (1.5,-2.5) -- (0,0);

        \node at (0,-3) {$R_{10}$};

        \node at (-1.5,-1.3) {$T_{t,1,0}$};

        \node at (1.5,-1.3) {$T_{t',0,0}$};
\end{tikzpicture}    \caption{Compatibility triple}
    \label{fig:compatibilitytriple}
\end{figure}
Given the way the map $f_{\tau}$ was designed, we know that $n''=n'+1$, and so the horizontal coordinates align, and moreover, $\tau(n',n)=t$ whilst $\tau(n'',n)=t'$. However, there is an obvious additional requirement: $t$ should be \emph{compatible} with $t'$, i.e.
\begin{equation*}
    t_E=t'_W,
\end{equation*}
and similarly for vertical compatibility.
\end{example}

The issue this points to is the fact this whole process should be reversible -- so that from a cofinal partial Esakia morphism, one can construct a tiling. Indeed, not only is the compatibility such an issue, but we additionally need to ensure that from such a map we have enough doubletons mapping appropriately to the $\R$ and $\So$ points.

It is in both of these situations that closed domains, as well as the special simplex and cut points, can help us; we give the following definition, where we remark in passing that all of the subsets considered are upsets.

\begin{definition}[Tiling domain of $\W$]\label{def: tiling domain}
    Given a set $\W$ of tiles, we define the following closed domain $\mathfrak{D}_{\W}$ of $P_{\W}$:
    \begin{enumerate}
    \item\label{eq:maximalcuts} (Maximal cuts) For $(A,B)$ one of the six pairs:
    \begin{equation*}
        (\Ho_{0},\Ho_{1}),(\V_{0},\V_{1}),(\Ho_{0},\V_{0}),(\Ho_{0},\V_{1}),(\Ho_{1},\V_{0}),(\Ho_{1},\V_{1}),
    \end{equation*}
    we add the pair $(\{A\},\{B\})$.
        \item (Successor cuts) Given a pair $(S,T)$, from amongst:
        \begin{equation*}
            (\Ho_{0},\Ho_{1}),(\Ho_{1},\Ho_{0}),(\V_{0},\V_{1}),(\V_{1},\V_{0}),
        \end{equation*}
        we use the following definitions:

\begin{table}[h]
    \centering
\begin{tabular}
{@{}llll@{}}
\toprule
Pair & Target \(T\) & Non-matching \(N\) & Matching \(Q\)\\
\midrule
$(\Ho_{0},\Ho_{1})$ & \(\mathsf H_1\)
  & \(\{R_{00},R_{10}\}\) & \(R_{01}\)\\
$(\Ho_{1},\Ho_{0})$ & \(\mathsf H_0\)
  & \(\{R_{00},R_{01}\}\) & \(R_{10}\)\\
$(\V_{0},\V_{1})$ & \(\mathsf V_1\)
  & \(\{S_{00},S_{10}\}\) & \(S_{01}\)\\
$(\V_{1},\V_{0})$ & \(\mathsf V_0\)
  & \(\{S_{00},S_{01}\}\) & \(S_{10}\)\\
\bottomrule
\end{tabular}
    \caption{Successor cut data}
    \label{tab:successorcutdata}
\end{table}

Using this table, we define the following sets:        \begin{equation*}
            A_{(S,T)}=P_{\W}\setminus (N_{(S,T)}\cup \{Q_{(S,T)},r\}) \text{ and } B_{(S,T)}=P_{\W}\setminus (\{Q_{(S,T)},c_{T},r\}).
        \end{equation*}
        and we put the pair $(A_{(S,T)},B_{(S,T)})\in D_{\W}$.
  \item (Horizontal compatibility cuts) For each $i,j\in\{0,1\}$ and each $t\in \W_{i,j}$, let $\mathrm{HComp}(t)=\{t'\in \W_{1-i,j} : t_E=t'_W\}$, and shorten $\R_{0}\coloneqq \R_{01}$ and $\R_{1}\coloneqq \R_{10}$. Then define:
        \begin{equation*}
            \mathrm{G}_{i,j,t}\coloneqq \{\T_{1-i,j,t'} \mid t'\in \mathrm{HComp}(t)\}.
        \end{equation*}
        and define the following sets:
        \begin{align*}
            C_{i,j,t}&\coloneqq P_{\W}\setminus (\{\T_{i,j,t},s_{\Ho_{i}},c_{\Ho_{i}},r\}\cup \mathrm{G}_{i,j,t})\\
            D_{i,j,t} &\coloneqq P_{\W}\setminus (\{\R_{i},s_{\Ho_{i}},c_{\Ho_{i}},r\}\cup \mathrm{G}_{i,j,t}).
        \end{align*}
        and we put the pair $(C_{i,j,t},D_{i,j,t})\in D_{\W}$.
\item (Vertical compatibility cuts) For each $i,j\in\{0,1\}$ and each $t\in \W_{i,j}$, let $\mathrm{VComp}(t)=\{t'\in \W_{i,1-j} \mid t_N=t'_S\}$, and shorten $\So_{0}\coloneqq \So_{01}$ and $\So_{1}\coloneqq \R_{10}$. Then define:
        \begin{equation*}
            \mathrm{G}_{i,j,t}\coloneqq \{\T_{i,1-j,t'} : t'\in \mathrm{VComp}(t)\}.
        \end{equation*}
        and define the following sets:
        \begin{align*}
            C_{i,j,t}&\coloneqq P_{\W}\setminus (\{\T_{i,j,t},s_{\V_{i}},c_{\V_{i},r}\}\cup \mathrm{G}_{i,j,t})\\
            D_{i,j,t} &\coloneqq P_{\W}\setminus (\{\So_{i},s_{\V_{i}},c_{\V_{i}},r\}\cup \mathrm{G}_{i,j,t}),
        \end{align*}
    and we put the pair $(C_{i,j,t},D_{i,j,t})\in D_{\W}$.
\end{enumerate}

    We then set $\mathfrak{D}_{\W}$ as in Definition \ref{def: closed domain condition}, i.e., adding to $\mathfrak{D}_{\W}$ all antichains which are contained in the union of one of the pairs, and have non-empty intersection with both sets.
\end{definition}

Before explaining the intuition behind these definitions, we provide some simple calculations with these cuts; all such calculations are completely elementary, requiring, essentially, enough patience to untangle the notation.

\begin{lemma}[Cut calculations]\label{lem: cut calculations}
    Given any set of tiles $\W$ the following holds:
    \begin{enumerate}
        \item  For each pair of \emph{successor cuts} $(S,T)$, we have that:
        \begin{enumerate}
            \item $A_{(S,T)}\setminus B_{(S,T)}=\{c_{T}\}$;
            \item $B_{(S,T)}\setminus A_{(S,T)}=N_{(S,T)}$;
            \item $P_{\W}\setminus (A_{(S,T)}\cup B_{(S,T)})=\{r,Q_{(S,T)}\}$.
        \end{enumerate}
        \item\label{eq:horizontal cut calculations} For each tile $t\in \W$, and each pair $(C_{i,j,t},D_{i,j,t})$ of \emph{horizontal compatibility cuts} we have:
        \begin{enumerate}
            \item $C_{i,j,t}\setminus D_{i,j,t}=\{\R_{i}\}$;
            \item $D_{i,j,t}\setminus C_{i,j,t}=\{\T_{i,j,t}\}$;
            \item $P_{\W}\setminus (C_{i,j,t}\cup D_{i,j,t})=\{r,s_{\Ho_{i}},c_{\Ho_{i}}\}\cup \mathrm{G}_{i,j,t}$.
        \end{enumerate}
        \item For each tile $t\in \W$, and each pair $(C_{i,j,t},D_{i,j,t})$ of \emph{vertical compatibility cuts} we have:
        \begin{enumerate}
            \item $C_{i,j,t}\setminus D_{i,j,t}=\{\So_{i}\}$;
            \item $D_{i,j,t}\setminus C_{i,j,t}=\{\T_{i,j,t}\}$;
            \item $P_{\W}\setminus (C_{i,j,t}\cup D_{i,j,t})=\{r,s_{\V_{i}},c_{\V_{i}}\}\cup \mathrm{G}_{i,j,t}$.
        \end{enumerate}
    \end{enumerate}
\end{lemma}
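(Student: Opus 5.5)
The plan is to verify each item by direct set-theoretic calculation from Definition~\ref{def: tiling domain}. Every set involved has the form $P_{\W}\setminus X$ for an explicit finite set $X$ of named points. For any $X,Y\subseteq P_{\W}$ we have
\begin{equation*}
(P_{\W}\setminus X)\setminus(P_{\W}\setminus Y)=Y\setminus X,\qquad P_{\W}\setminus\big((P_{\W}\setminus X)\cup(P_{\W}\setminus Y)\big)=X\cap Y.
\end{equation*}
So each claim reduces to computing $Y\setminus X$, $X\setminus Y$ and $X\cap Y$ for the relevant excluded sets. This in turn reduces to checking which named points of $P_{\W}$ are distinct. That is immediate from the definition of the tiling poset, since all the listed points ($\R$'s, $\So$'s, tile points, simplex points, cut points, root) were introduced as pairwise different elements.

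For the successor cuts I take $X=N_{(S,T)}\cup\{Q_{(S,T)},r\}$ and $Y=\{Q_{(S,T)},c_T,r\}$. By Table~\ref{tab:successorcutdata}, $N_{(S,T)}$ consists of two successor points different from $Q_{(S,T)}$, and neither of them is $c_T$ or $r$. Hence $Y\setminus X=\{c_T\}$, $X\setminus Y=N_{(S,T)}$ and $X\cap Y=\{Q_{(S,T)},r\}$, which are (a), (b), (c).

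For the horizontal compatibility cuts I take $X=\{\T_{i,j,t},s_{\Ho_i},c_{\Ho_i},r\}\cup\mathrm G_{i,j,t}$ and $Y=\{\R_i,s_{\Ho_i},c_{\Ho_i},r\}\cup\mathrm G_{i,j,t}$. The key observations are the following.
\begin{itemize}
\item $\R_i$ is not a tile point, so $\R_i\notin X$.
\item Every element of $\mathrm G_{i,j,t}$ has first index $1-i\neq i$, so $\T_{i,j,t}\notin\mathrm G_{i,j,t}$ and hence $\T_{i,j,t}\notin Y$.
\end{itemize}
Therefore $Y\setminus X=\{\R_i\}$, $X\setminus Y=\{\T_{i,j,t}\}$, and $X\cap Y=\{r,s_{\Ho_i},c_{\Ho_i}\}\cup\mathrm G_{i,j,t}$.

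The vertical case is verbatim the same, with second index $1-j$ in $\mathrm G_{i,j,t}$. The only care needed is in reading the definition as intended: the shorthand should be $\So_1\coloneqq\So_{10}$ (not $\R_{10}$), and $c_{\V_i,r}$ should be the two points $c_{\V_i}$ and $r$. I expect no real obstacle beyond this bookkeeping. The one point worth stating explicitly is the disjointness of $\T_{i,j,t}$ from $\mathrm G_{i,j,t}$, which is guaranteed by the parity shift in the index.
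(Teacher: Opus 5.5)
Your proposal is correct and takes the same approach as the paper. The paper gives no written proof, only remarking that the calculations need ``enough patience to untangle the notation'', and your reduction of each item to $Y\setminus X$, $X\setminus Y$ and $X\cap Y$ for the excluded sets, together with the index-parity observation $\T_{i,j,t}\notin\mathrm{G}_{i,j,t}$, is exactly that untangling; your readings of the typos ($\So_1=\So_{10}$, and $c_{\V_i},r$ as two separate points) are the intended ones, and the computation is also unaffected by the vertical cuts using the index $i$ where $j$ would be expected.
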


\begin{example}[Example \ref{ex: missing compatibility} revisited]
    Let us again return to the triangle of Figure \ref{fig:compatibilitytriple}. We will show, simply from the fact that $f_{\tau}$ satisfies the CDC with respect to $\mathfrak{D}_{\W}$, that $t$ and $t'$ are compatible. For this purpose, consider the tripleton:
    \begin{equation*}
        F=\{n,n',n''\}.
    \end{equation*}
    Given what we assume of this face, and the way that $f_{\tau}$ was defined, we have that $F\notin \domm(f_{\tau})$. On the other hand, by assumption, we have:
    \begin{equation}\label{eq: image of the map}
        f_{\tau}[{\uparrow}F]=\{\R_{10},\T_{t,1,0},\T_{t',0,0}\}.
    \end{equation}
    By construction this is an antichain, and we have by the assumption of the CDC that it does not belong to $\mathfrak{D}_{\W}$. Now assume, towards a contradiction, that $t'\notin \mathrm{HComp}(t)$ i.e., the tile is not horizontally compatible. Then $\T_{t',0,0}\notin \mathrm{G}_{i,j,t}$, and so by Lemma \ref{lem: cut calculations}.\eqref{eq:horizontal cut calculations}:
    \begin{equation*}
        f[{\uparrow}F]\subseteq C_{i,j,t}\cup D_{i,j,t}.
    \end{equation*}
    On the other hand, by the same calculations, one sees, upon inspecting \eqref{eq: image of the map} that we have $f_{\tau}[{\uparrow}F]\cap (C_{i,j,t}\setminus D_{i,j,t})\neq \emptyset$ and $f_{\tau}[{\uparrow}F]\cap (D_{i,j,t}\setminus C_{i,j,t})\neq \emptyset$. This is a contradiction to the construction of $\mathfrak{D}_{\W}$, since all antichains contained in the union and having non-empty intersection with both upsets. By reductio, we conclude that $t'$ is horizontally compatible.
\end{example}

The idea contained in Example \ref{ex: missing compatibility} is precisely the key drive of the proof: to ensure that points are compatibly related, one constructs a \emph{cut} of the frame into two upsets, such that intersecting their complement means precisely the condition we are after. The simple example above will then recur in the proofs below.

\section{Undecidability of Medvedev's logic}\label{sec: undecidability of med}

Having in the previous section set up the basic constructions, we now move to prove the two sides of the equivalence \eqref{eq:keyequation}. The two directions are independent, and proceed by making formal a lot of the discussion from Section \ref{sec: tiling posets}.

\subsection{Tiling \texorpdfstring{$\Rightarrow$}{implies} Medvedev cover}

Assume that $\W$ is a set of tiles, and $\tau\colon \mathbb{Z}_{m}\times\mathbb{Z}_{k}\to \W$ is a periodic tiling.\footnote{Strictly speaking, we are, of course, assuming according to Remark~\ref{rem: restriction}, a particular kind of tiling.} Without loss of generality\footnote{One can simply repeat the period six times to ensure this is possible. This assumption is technical and only needed for the surjectivity.} we assume that $m,k\geq 6$ (so that for each of the elements in $\{\Ho_{0},\Ho_{1},\V_{0},\V_{1}\}$, the simplex and cut points are hit). Our goal is to construct a cofinal partial Esakia morphism onto $P_{\W}$ satisfying the CDC with respect to $\mathfrak{D}_{\W}$.

As described in Definition \ref{def: simplex of tiling} we construct the simplex $\til(\Omega)$, and as in Definition 
\ref{def: folding morphism} take the folding map $f_{\tau}$. Then we show: 

\begin{proposition}[Folding map satisfies CDC]\label{prop: folding map satisfies CDC}
    The map $f_{\tau}$ is a cofinal partial Esakia morphism onto $P_{\W}$ satisfying the CDC with respect to $\mathfrak{D}_{\W}$.
\end{proposition}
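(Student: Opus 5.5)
The first half of the statement is Proposition \ref{prop: folding map partial esakia}, so what remains is the closed domain condition. I read the folding map so that it matches $P_{\W}$: monochromatic non-maximal faces of size $>1$ (the ``$d_T$'' clause) go to $s_T$, and the full colour class $T$ goes to $c_T$. The plan is to classify every face $F\notin\domm(f_\tau)$ by Proposition \ref{prop: domain definition} and compute the antichain $\min f_\tau[{\uparrow}F]$. By that proposition, such an $F$ has $|F|\geq 3$, has at least two colours in $\mathrm{cor}[F]$, and is not $\Omega$. The points of ${\uparrow}F$ in the domain are its singletons, doubletons and monochromatic subfaces. Hence $f_\tau[{\uparrow}F]$ contains the colours present, the successor and tile points coming from bichromatic pairs inside $F$, and $s_T$ or $c_T$ for each colour class $F\cap T$ of size $\geq 2$. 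The class gives $c_T$ exactly when $F\supseteq T$.

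The minimal elements will be:
\begin{itemize}
\item all points of type $\R$, $\So$ or $\T$ produced by bichromatic pairs;
\item $c_T$ or $s_T$ for each colour class of size $\geq 2$;
\item no maximal point, since every colour of $F$ lies above some bichromatic pair.
\end{itemize}
For each of the four kinds of pair in $\mathfrak{D}_{\W}$, I will show that this antichain $C$ is not in $\mathfrak{D}_{U,V}$. It is enough to show one of three things: $C\not\subseteq U\cup V$, or $C\cap(U\setminus V)=\varnothing$, or $C\cap(V\setminus U)=\varnothing$. All the set differences needed are given by Lemma \ref{lem: cut calculations}.

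\emph{Maximal cuts.} Here $U\cup V$ is a pair of maximal points. Our $C$ contains no maximal point and is nonempty, so $C\not\subseteq U\cup V$.

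\emph{Successor cuts $(S,T)$.} A hit needs $c_T\in C$, so $F\supseteq T$, and some point of $N_{(S,T)}$ in $C$. It also needs $C$ to avoid $Q_{(S,T)}$ and $r$. Take $(\Ho_0,\Ho_1)$. Then $F$ contains all of $\Ho_1$ and at least one even $n$, because $C$ contains $\R_{t}$ or $\R_{10}$, which requires an $\Ho_0$ element. Since $n+1\in\Ho_1\subseteq F$ (indices mod $m$, with $m$ even), the pair $\{n,n+1\}$ lies in $F$ and is sent to $\R_{01}=Q$. That contradicts $C\subseteq U\cup V$. The other three cases are symmetric.

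\emph{Compatibility cuts.} Take the horizontal cut for $t\in\W_{i,j}$. A hit requires $\R_i\in C$, so there are $a\in F\cap\Ho_i$ and $a+1\in F$. It also requires $\T_{i,j,t}\in C$, so there are $b\in F\cap\Ho_i$ and $c\in F\cap\V_j$ with $\tau(b,c)=t$. Finally, $C$ must avoid $s_{\Ho_i}$, $c_{\Ho_i}$, $r$ and $\mathrm{G}_{i,j,t}$. Avoiding $s_{\Ho_i}$ and $c_{\Ho_i}$ forces $|F\cap\Ho_i|=1$, so $a=b$. But then $\{a+1,c\}\subseteq F$ is sent to $\T_{1-i,j,\tau(a+1,c)}$, and since $\tau$ is a tiling, $\tau(a+1,c)\in\mathrm{HComp}(t)$, i.e. this point lies in $\mathrm{G}_{i,j,t}$. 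That is a contradiction. The vertical case is identical. In it I read the obvious typos $\So_1=\So_{10}$ and the $\V$-indices as $j$.

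I expect the main obstacle to be bookkeeping rather than ideas. Two points need care. First, the computation of $\min f_\tau[{\uparrow}F]$, in particular that $s_T$ and $c_T$ really are minimal and comparable only with $T$. Second, the sign conventions of $\R_{01}$ versus $\R_{10}$ in the successor case, together with the parity of $m$ and $k$ needed so that $n+1$ has the opposite colour. I would check the successor case most carefully, since it is the only place where the global ``all of $T$'' information is used.
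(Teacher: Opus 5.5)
Your proposal is correct and follows essentially the same route as the paper. The morphism part is Proposition~\ref{prop: folding map partial esakia}. The successor cuts fail because $c_T$ forces $T\subseteq F$, and hence the pair $\{n,n+1\}$ mapping to $Q_{(S,T)}$ lies in $F$. The compatibility cuts fail because excluding the simplex and cut points of $\Ho_i$ collapses the two $\Ho_i$-witnesses to a single point $a$, after which $\{a+1,c\}$ yields a point of $\mathrm{G}_{i,j,t}$.

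Your version is slightly more careful than the paper's: you work with $\min f_\tau[{\uparrow}F]$ as the CDC actually requires, you exclude $c_{\Ho_i}$ as well as $s_{\Ho_i}$, and you handle the maximal cuts explicitly, which the paper's proof leaves unstated.
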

\begin{proof}
    The fact that $f_{\tau}$ is a cofinal partial Esakia morphism and onto was shown in Proposition \ref{prop: folding map partial esakia}. We thus focus on proving the CDC.

    So assume that $F\in \til(\Omega)$ is a face such that $F\notin \domm(f_{\tau})$. Thus, by Proposition \ref{prop: domain definition} we know that $F$ is not the root, $|F|>2$ and there exist at least two points $n,n'$ with different coordinate-parity.      The proof proceeds by cases, depending on the kind of cut considered.

    \begin{enumerate}
        \item (Horizontal cuts) Suppose towards a contradiction that $f_{\tau}[{\uparrow}F]$ is an antichain contained in $C_{i,j,t}\cup D_{i,j,t}$, and intersecting each of the two sets. The latter condition means that there are $D_{i}\subseteq F$ such that $f_{\tau}(D_{i})=\R_{i}$, and $D_{t}\subseteq F$ such that $f_{\tau}(D_{t})=\T_{i,j,t}$. Say that $D_{i}=\{n_{0},n_{1}\}$. Assume without loss of generality that  $f_{\tau}(\{n_{0}\})=\Ho_{0}$ and $f_{\tau}(\{n_{1}\})=\Ho_{1}$. The construction of $f_{\tau}$ then tells us  that $n_{1}=n_{0}+1$. 
        
        On the other hand, by definition of $D_{t}=\{n_{2},n_{3}\}$ where $f_{\tau}(\{n_{2}\})=\Ho_{0}$ ($i=0$ by the assumption on $\R_{i}=\R_{01}$ made above), and $f_{\tau}(\{n_{3}\})=\V_{j}$. 

        However, note that by Lemma \ref{lem: cut calculations}, $s_{\Ho_{i}}\notin C_{i,j,t}\cup D_{i,j,t}$. Since by assumption we must have $f_{\tau}(\{n_{0}\})=f_{\tau}(\{n_{2}\})$ we conclude:
        \begin{equation*}
            n_{0}=n_{2}
        \end{equation*}
        in other words, the \emph{simplex point being excluded ensures the square becomes a triangle along the $i$-coordinate}.
        
        We thus consider the tripleton:
        \begin{equation*}
            F_{0}=\{n_{0},n_{1},n_{3}\}\subseteq F.
        \end{equation*}
        The situation obtained now is analogous to that from Example \ref{ex: missing compatibility}. Namely: $f_{\tau}(\{n_{0},n_{1}\})=\R_{01}$, and $f_{\tau}(\{n_{0},n_{3}\})=\T_{i,j,t}$.  So we can ask what is $f_{\tau}(\{n_{1},n_{3}\})$ is. By construction on the tiling, we have that $\tau(n_{0},n_{3})=t$ and $\tau(n_{1},n_{3})=t'$, and since $n_{1}=n_{0}+1$, we have that
        \begin{equation*}
            \mathrm{w}(t)=\mathrm{e}(t').
        \end{equation*}
        Thus $f_{\tau}(\{n_{1},n_{3}\})=\T_{1,j,t'}$, and by construction, $\T_{1,j,t'}\in \mathrm{G}_{i,j,t}$. But this is a contradiction, since by Lemma \ref{lem: cut calculations}, $\mathrm{G}_{i,j,t}\cap (C_{i,j,t}\cup D_{i,j,t})=\varnothing$. This is a contradiction to our assumption on $f_{\tau}[{\uparrow}F]$ being contained in the union.
        
        An entirely similar case follows if we assume that $i=1$.
        \item (Vertical cuts) The proof is entirely analogous to the case for horizontal cuts.
        \item (Successor cuts) Suppose that $(S,T)$ is a pair of successor cuts, say, without loss of generality that it is $(\Ho_{0},\Ho_{1})$. Assume that $f_{\tau}[{\uparrow}F]$ is an antichain contained in $A_{(\Ho_{0},\Ho_{1})}\cup B_{(\Ho_{0},\Ho_{1})}$, and intersecting each of them as needed. Note that then $F_{1}\subseteq F$ is a face such that every odd number from $\mathbb{Z}_{m}$ is contained there (by definition of when $f_{\tau}(F_{1})=c_{\Ho_{1}}$). On the other hand there is a doubleton $D\subseteq F$ such that $f_{\tau}(D)=\R_{00}$ or $f_{\tau}(D)=\R_{10}$, and so there is a point $n\in F$ such that $f_{\tau}(\{n\})=\Ho_{0}$. Let $n'$ be arbitrary such that $n'=n+1\mod \mathbb{Z}_{m}$; then $n'\in F$ by our assumption on $F_{1}$. Thus we have that:
        \begin{equation*}
            D_{0}=\{n,n'\}\subseteq F,
        \end{equation*}
        and by definition $f_{\tau}(D_{0})=\R_{01}$, whilst by assumption, $\R_{01}\notin A_{(\Ho_{0},\Ho_{1})}\cup B_{(\Ho_{0},\Ho_{1})}$. This is a contradiction to $f_{\tau}[{\uparrow}F]$ being contained in the union.
    \end{enumerate}
    
\end{proof}

This establishes one direction:

\begin{corollary}[Periodic tiling implies Medvedev covers]\label{cor: Tiling implies Medvedev covers}
    Whenever $\W$ is a set of tiles which tiles periodically, then $\mathrm{Cov}_{\ML}(P_{\W},\mathfrak{D}_{\W})$ holds. 
\end{corollary}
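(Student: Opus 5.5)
The corollary should follow by assembling the results just established, with one small reduction at the start. Suppose $\W=\bigcup\W_{i,j}$ admits a periodic tiling in the sense of Remark \ref{rem: restriction}, say $\tau\colon\mathbb{Z}_{m}\times\mathbb{Z}_{k}\to\W$.

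\emph{Step 1: enlarge the period.} We replace $\tau$ by a tiling with period at least $6$ in each direction. If we only want $m,k\geq 6$ without further care, it suffices to repeat the period six times: set $\tau'(a,b)\coloneqq\tau(a \bmod m, b\bmod k)$ on $\mathbb{Z}_{6m}\times\mathbb{Z}_{6k}$. We must check that this preserves the parity-surjectivity constraint of Remark \ref{rem: restriction}.
\begin{itemize}
\item Horizontal and vertical matching are preserved, since they are local and reduce to the matching conditions for $\tau$ modulo $m$ and $k$.
\item The parity structure is compatible with the reduction map, because $6m$ and $6k$ are even. Hence $a \bmod 2$ is well defined on $\mathbb{Z}_{6m}$.
\item When $m$ is even, reduction mod $m$ preserves parity. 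The restriction of $\tau'$ to each parity class $(i,j)$ therefore has exactly the same image as the restriction of $\tau$, which is $\W_{i,j}$.
\item When $m$ is odd, both residues mod $m$ occur in each parity class of $\mathbb{Z}_{6m}$. The image then contains the union of images, which may overshoot $\W_{i,j}$. To avoid this, we first double an odd period, which is legitimate only if the parity-respecting condition already made sense for $\tau$.
\end{itemize}
Because of this subtlety, the cleaner option is to take multiples by an even factor. The statement of Remark \ref{rem: restriction} presupposes that parities are meaningful on $\mathbb{Z}_m$, so we assume $m,k$ even and use the factor $6$ (or simply $2$ repeated until $\geq 6$).

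\emph{Step 2: build the cover.} Form the simplex $\til(\Omega)$ of Definition \ref{def: simplex of tiling}. This is a finite Medvedev poset $M_{N}$ with $N=|\Omega_{\tau}|=m+k$. Take the folding map $f_{\tau}$ of Definition \ref{def: folding morphism}.

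\emph{Step 3: invoke the earlier propositions.} By Proposition \ref{prop: folding map partial esakia} (which uses $m,k\geq 6$ for surjectivity), together with Proposition \ref{prop: folding map satisfies CDC}, $f_{\tau}$ is an onto cofinal partial Esakia morphism $M_{N}\to P_{\W}$ satisfying the CDC with respect to $\mathfrak{D}_{\W}$. This is literally the statement $\mathrm{Cov}_{\ML}(P_{\W},\mathfrak{D}_{\W})$.

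The only real obstacle is Step 1: making sure the period enlargement keeps the surjectivity-per-parity-class constraint and the parity bookkeeping (the sets $\Ho_0,\Ho_1,\V_0,\V_1$) coherent. All the substantive work is already contained in Propositions \ref{prop: folding map partial esakia} and \ref{prop: folding map satisfies CDC}.
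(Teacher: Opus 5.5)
Your proposal is correct and follows the paper's route: enlarge the period so that $m,k\geq 6$, form the simplex $\til(\Omega)\cong M_{m+k}$ with the folding map $f_{\tau}$, and read off $\mathrm{Cov}_{\ML}(P_{\W},\mathfrak{D}_{\W})$ from Propositions \ref{prop: folding map partial esakia} and \ref{prop: folding map satisfies CDC}. Your insistence on even periods is a worthwhile refinement of the paper's ``repeat the period six times'' footnote, since for odd $m$ the parity classes $\Ho_0,\Ho_1$ on $\mathbb{Z}_m$ are not well defined; note, though, that once $m,k$ are even any repetition factor preserves the parity classes, so requiring an even factor is unnecessary.
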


\subsection{Medvedev cover \texorpdfstring{$\Rightarrow$}{implies} tiling}\label{sec: medvedev cover implies tiling}

We now focus on the harder direction: showing that from a cofinal partial Esakia morphism satisfying CDC, we can extract a periodic tiling of the plane.

Fix $f\colon M_{n}\to P_{\W}$ such a partial map. First let:
\begin{equation*}
    A\coloneqq \max (f^{-1}[\{\Ho_{0},\Ho_{1}\}]) \text{ and } B\coloneqq \max(f^{-1}[\{\V_{0},\V_{1}\}]).
\end{equation*}
Note that $A,B$ partition $\max(M_{n})$, since $f$ is cofinal.

The proof will proceed in a series of lemmas:

\begin{lemma}[Well-defined induced tiling function]\label{lem: well-defined induced tiling}
    For each $a\in A$ and $b\in B$, we have that
    \begin{equation*}
        f(\{a,b\})\in \{\T_{i,j,t} : i,j\in \{0,1\},t\in \W\},
    \end{equation*}
    i.e., $f$ is defined at the doubleton, and it maps to one of the tile points.
\end{lemma}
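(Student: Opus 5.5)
We first note that $\{a\},\{b\}$ are maximal points of $M_n$ (singletons), so by cofinality they lie in $\domm(f)$ and are mapped to maximal points of $P_{\W}$. Indeed, any point above $f(\{a\})$ is hit by some point above $\{a\}$, and the only such point is $\{a\}$ itself. Hence $f(\{a\})\in\{\Ho_0,\Ho_1\}$ and $f(\{b\})\in\{\V_0,\V_1\}$. This uses that $a\in A$ is a maximal element of $f^{-1}[\{\Ho_0,\Ho_1\}]$, and that maximal elements of $M_n$ are singletons. Say $f(\{a\})=\Ho_i$ and $f(\{b\})=\V_j$.

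The main step is to show that $D=\{a,b\}\in\domm(f)$. The up-set ${\uparrow}D=\{D,\{a\},\{b\}\}$, so $f[{\uparrow}D]$ is either $\{\Ho_i,\V_j\}$ (if $D\notin\domm(f)$) or $\{f(D),\Ho_i,\V_j\}$. Suppose $D\notin\domm(f)$. Then by the CDC, $\min f[{\uparrow}D]=\{\Ho_i,\V_j\}\notin\mathfrak{D}_{\W}$. But $(\{\Ho_i\},\{\V_j\})$ is one of the six maximal cuts in Definition~\ref{def: tiling domain}.\eqref{eq:maximalcuts}. The antichain $\{\Ho_i,\V_j\}$ lies in the union $\{\Ho_i\}\cup\{\V_j\}$ and meets both $\{\Ho_i\}\setminus\{\V_j\}$ and $\{\V_j\}\setminus\{\Ho_i\}$, so it belongs to $\mathfrak{D}_{\{\Ho_i\},\{\V_j\}}\subseteq\mathfrak{D}_{\W}$. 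This is a contradiction. There is a small subtlety in reading the maximal cuts as pairs of upsets: the singletons $\{A\}$ of maximal points are indeed upsets, so they are legitimate members of $D$.

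With $D\in\domm(f)$, order preservation gives $f(D)\leq\Ho_i$ and $f(D)\leq\V_j$. By the domain condition, $f[{\uparrow}D]={\uparrow}f(D)$, so ${\uparrow}f(D)=\{f(D),\Ho_i,\V_j\}$ exactly, with $f(D)$ strictly below both maximal points (it differs from each, since they are distinct). Now we inspect $P_{\W}$. The points whose up-set contains both $\Ho_i$ and $\V_j$ are $r$, the cut and simplex points (no: these have a single maximal point above them), and the tile points $\T_{i,j,t}$. The root $r$ and the $c$, $s$, $\R$, $\So$ points are excluded because their up-sets either have a single maximal element or contain only horizontal or only vertical maximal points. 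The root is excluded since ${\uparrow}r$ is all of $P_{\W}$, which is larger than three elements. Hence $f(D)=\T_{i,j,t}$ for some $t\in\W_{i,j}$.

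The expected obstacle is only the bookkeeping of the final case check, and confirming that $f(\{a\})$ is genuinely maximal rather than merely in the preimage. The second point is handled by the back-condition argument above.
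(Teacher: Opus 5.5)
Your proof is correct and follows the same route as the paper: if $\{a,b\}\notin\domm(f)$, the CDC gives $\{\Ho_i,\V_j\}\notin\mathfrak{D}_{\W}$, which contradicts the maximal cut $(\{\Ho_i\},\{\V_j\})$, and the domain condition then forces the value at $\{a,b\}$ to be a tile point $\T_{i,j,t}$. Your use of the back/domain condition to exclude the root $r$ is slightly more careful than the paper's appeal to order-preservation, since order-preservation alone would not rule out $f(\{a,b\})=r$.
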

\begin{proof}
    Note that if $f$ is defined on $\{a,b\}$, then to satisfy order-preservation, it must map to the tile points. If $\{a,b\}\notin \domm(f)$, since $f$ satisfies the CDC with respect to $\mathfrak{D}_{\W}$, we obtain that $f[{\uparrow}\{a,b\}]\notin \mathfrak{D}_{\W}$ -- a contradiction to the latter containing all antichains in the maximal cuts (see Definition \ref{def: tiling domain}.\eqref{eq:maximalcuts}). By reductio, we conclude that $\{a,b\}\in \domm(f)$.
\end{proof}
Using Lemma \ref{lem: well-defined induced tiling}, we obtain that given $a\in A$ and $b\in B$, $f(\{a,b\})=\T_{i,j,t}$; we denote by $t_{a,b}$ the tile ocurring in the last position of the subscript. We then set:
\begin{equation*}
    \tau(a,b)\coloneqq t_{a,b}.
\end{equation*}

By an analogous argument, the following holds as well.

\begin{lemma}[Doubletons are in the domain]\label{lem: doubletons are in the domain}
Let
\begin{equation*}
    A\coloneqq \max (f^{-1}[\{\Ho_{0}\}]) \text{ and } B\coloneqq \max(f^{-1}[\{\Ho_{1}\}]).
\end{equation*}
    Then for each $a\in A$ and $b\in B$, we have that
    \begin{equation*}
        f(\{a,b\})\in \{\R_{01}, \R_{10}, \R_{t}\},
    \end{equation*}
    i.e., $f$ is defined at the doubleton, and it maps to one of the horizontal successor points. 

    Similarly, let 
    \begin{equation*}
    A\coloneqq \max (f^{-1}[\{\V_{0}\}]) \text{ and } B\coloneqq \max(f^{-1}[\{\V_{1}\}]).
\end{equation*}
Then for each $a\in A$ and $b\in B$, we have that
    \begin{equation*}
        f(\{a,b\})\in \{\So_{01}, \So_{10}, \So_{t}\},
    \end{equation*}
    i.e., $f$ is defined at the doubleton, and it maps to one of the vertical successor points. 
\end{lemma}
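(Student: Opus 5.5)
We follow the template of the proof of Lemma \ref{lem: well-defined induced tiling}, now using the maximal cuts $(\{\Ho_{0}\},\{\Ho_{1}\})$ and $(\{\V_{0}\},\{\V_{1}\})$ from Definition \ref{def: tiling domain}.\eqref{eq:maximalcuts}. We treat the horizontal case; the vertical one is identical with $\V_{0},\V_{1},\So$ in place of $\Ho_{0},\Ho_{1},\R$.

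Fix $a\in\max(f^{-1}[\{\Ho_{0}\}])$ and $b\in\max(f^{-1}[\{\Ho_{1}\}])$; by maximality in $M_{n}$ these are singletons, so $a\neq b$ and $\{a,b\}\in M_{n}$. First we compute ${\uparrow}\{a,b\}$: it consists exactly of $\{a,b\}$, $\{a\}$ and $\{b\}$. The latter two lie in $\domm(f)$ with $f(\{a\})=\Ho_{0}$ and $f(\{b\})=\Ho_{1}$. Hence
\begin{equation*}
    f[{\uparrow}\{a,b\}]\subseteq\{f(\{a,b\}),\Ho_{0},\Ho_{1}\},
\end{equation*}
where $f(\{a,b\})$ contributes only if $\{a,b\}\in\domm(f)$.

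\emph{Step 1: $\{a,b\}\in\domm(f)$.} Suppose not. Then $f[{\uparrow}\{a,b\}]=\{\Ho_{0},\Ho_{1}\}$, and its set of minimal elements is this same set, which is an antichain of maximal points. It is contained in $\{\Ho_{0}\}\cup\{\Ho_{1}\}$ and meets both $\{\Ho_{0}\}\setminus\{\Ho_{1}\}$ and $\{\Ho_{1}\}\setminus\{\Ho_{0}\}$. So it lies in $\mathfrak{D}_{\{\Ho_{0}\},\{\Ho_{1}\}}\subseteq\mathfrak{D}_{\W}$. This contradicts the CDC.

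One small check is needed here: the elements of the maximal-cut pairs must count as upsets for the definition of $\mathfrak{D}_{U,V}$ to apply. They do, since $\{\Ho_{i}\}$ is an upset because $\Ho_{i}$ is maximal.

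\emph{Step 2: identify the value.} Let $x=f(\{a,b\})$. Order preservation gives $x\leq\Ho_{0}$ and $x\leq\Ho_{1}$. Moreover $x\neq\Ho_{0},\Ho_{1}$. To see this, use the domain-definition clause: $f[{\uparrow}\{a,b\}]=\{x,\Ho_{0},\Ho_{1}\}$ must equal ${\uparrow}x$. If $x=\Ho_{0}$, this would give $\{\Ho_{0},\Ho_{1}\}={\uparrow}\Ho_{0}=\{\Ho_{0}\}$, which is impossible; the case $x=\Ho_{1}$ is symmetric. The same equation shows that ${\uparrow}x=\{x,\Ho_{0},\Ho_{1}\}$ exactly.

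Now we inspect $P_{\W}$. The points whose up-set is precisely $\{x,\Ho_{0},\Ho_{1}\}$ are the points with exactly $\Ho_{0},\Ho_{1}$ as successors, namely $\R_{t},\R_{01},\R_{10}$. The root is excluded because ${\uparrow}r$ is all of $P_{\W}$. Simplex and cut points are excluded because each lies below only one maximal point. Tile points are excluded because they lie below a $\V_{j}$.

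The main obstacle is only bookkeeping: confirming that ${\uparrow}\{a,b\}$ has no further elements, which holds in $M_{n}$ under $\supseteq$ since the supersets relation reverses to subsets. We also take the elements of $A$ and $B$ to be singletons, which follows from maximality together with cofinality.
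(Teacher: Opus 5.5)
Your proposal is correct and follows the paper's route. The paper proves this lemma ``by an analogous argument'' to Lemma~\ref{lem: well-defined induced tiling}: the maximal cuts $(\{\Ho_{0}\},\{\Ho_{1}\})$ and $(\{\V_{0}\},\{\V_{1}\})$ combined with the CDC force $\{a,b\}\in\domm(f)$, and the value is then read off from the order structure of $P_{\W}$, exactly as in your Steps~1 and~2. Your Step~2 is slightly more careful than the paper, which appeals only to order preservation and so never explicitly excludes the root $r$; your use of the domain-definition clause, ${\uparrow}x=\{x,\Ho_{0},\Ho_{1}\}$, closes that small gap.
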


We will now show that this provides a well-defined tiling:

\begin{lemma}[Successor lemma]\label{lem: successor lemma}
    For each $a\in A$ and $b\in B$, if $f(\{a\})=\Ho_{i}$, there is some $a'\in A$ such that $f(\{a'\})=\Ho_{1-i}$, and such that:
    \begin{equation*}
        f(\{a,a'\})=\R_{i1-i}.
    \end{equation*}   
    Similarly for each $a\in A$ and $b\in B$ such that $f(\{b\})=\V_{j}$, there exists some $b'\in B$ such that $f(\{b'\})=\V_{1-j}$, and
    \begin{equation*}
        f(\{b,b'\})=\So_{j1-j}.
    \end{equation*}
\end{lemma}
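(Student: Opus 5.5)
The proof will use the successor cut $(A_{(S,T)},B_{(S,T)})$ together with the CDC, in the same way the maximal cuts were used in Lemma \ref{lem: well-defined induced tiling}. We treat the horizontal case with $i=0$, so $(S,T)=(\Ho_{0},\Ho_{1})$. The case $i=1$ and the vertical cases are symmetric, using the other rows of Table \ref{tab:successorcutdata}.

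Fix $a$ with $f(\{a\})=\Ho_{0}$, and consider the face
\begin{equation*}
F\coloneqq \{a\}\cup A_{1},\qquad A_{1}\coloneqq \{x\in A : f(\{x\})=\Ho_{1}\},
\end{equation*}
that is, $a$ together with all maximal points coloured $\Ho_{1}$. The upset ${\uparrow}F$ consists of nonempty subsets of $F$, so we can compute $f[{\uparrow}F]$.

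First, $A_{1}$ is nonempty. By surjectivity some point is sent to $\R_{01}$. By the back condition, that point lies below a singleton coloured $\Ho_{1}$, and maximal points are singletons.

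Next, the face $A_{1}$ itself should be sent to $c_{\Ho_{1}}$, or at least some subset of it should be. Every singleton in $A_{1}$ maps to $\Ho_{1}$. The back condition, starting from a preimage of $c_{\Ho_{1}}$, gives a face all of whose image lies in $\{c_{\Ho_1},s_{\Ho_1},\Ho_1\}$, and hence whose points are all coloured $\Ho_{1}$, so it is a subset of $A_{1}$. I expect we need that $A_{1}$ itself is in the domain and maps to $c_{\Ho_{1}}$. The domain condition on subsets of $A_{1}$ should force this: $f[{\uparrow}A_{1}]\subseteq\{\Ho_1,s_{\Ho_1},c_{\Ho_1}\}$, and a preimage of $c_{\Ho_{1}}$ below it makes that image equal to ${\uparrow}c_{\Ho_{1}}$. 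Actually it suffices that some subset of $F$ maps to $c_{\Ho_{1}}$, which the back-condition argument already gives. Either way $c_{\Ho_{1}}\in f[{\uparrow}F]$.

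By Lemma \ref{lem: doubletons are in the domain}, each doubleton $\{a,x\}$ with $x\in A_{1}$ is defined and maps to $\R_{00}$, $\R_{01}$ or $\R_{10}$. Suppose, towards a contradiction, that none maps to $\R_{01}$. The idea is then to show that $F\notin\domm(f)$ and that $\min f[{\uparrow}F]$ is an antichain in $\mathfrak{D}_{A,B}$ for the pair $(A_{(\Ho_0,\Ho_1)},B_{(\Ho_0,\Ho_1)})$, which contradicts the CDC. Note that $\{a,x\}\in{\uparrow}F$ gives some element of $N=\{\R_{00},\R_{10}\}$ in the image, and $c_{\Ho_1}$ is in the image.

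The main obstacle is controlling the whole image $f[{\uparrow}F]$. We must show two things. First, it never hits $\R_{01}$ or $r$, that is, it avoids $P_{\W}\setminus(A\cup B)=\{r,\R_{01}\}$ by Lemma \ref{lem: cut calculations}. Second, its minimal elements form an antichain meeting both $\{c_{\Ho_1}\}$ and $N$.

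For the first point, take a subface $G\subseteq F$ in the domain. Suppose $f(G)=\R_{01}$. The back condition then gives a doubleton or edge inside $G$ realizing it. I plan to argue, via the domain condition and the fact that the only points of $F$ coloured $\Ho_0$ is $a$, that such a $G$ must be exactly $\{a,x\}$. This contradicts the assumption. The value $r$ is excluded because $F$ contains no $\V$-coloured points, while any preimage of $r$ has all four colours above it.

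For the second point, the minimal elements are among $N$ and $c_{\Ho_1}$ (together possibly with $s_{\Ho_0}$, which needs ruling out since $a$ is the only $\Ho_0$-point), and these are pairwise incomparable. Finally, $F\notin\domm(f)$ because its image is not a principal upset. So we reach the contradiction, and some $a'=x\in A_{1}$ has $f(\{a,a'\})=\R_{01}$.
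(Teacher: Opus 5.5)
Your argument is essentially the paper's. The difference is only in the choice of face and in the logical form:
\begin{itemize}
\item The paper takes an arbitrary $F$ with $f(F)=c_{\Ho_1}$ and works with $F'=F\cup\{a\}$. It argues directly that $\R_{01}$ must occur in $f[{\uparrow}F']$, and then extracts $a'$.
\item You take the larger face $\{a\}\cup A_1$, which contains every preimage of $c_{\Ho_1}$, and argue by contradiction.
\end{itemize}
The core mechanism is identical in both. It is the CDC for the successor cut $(A_{(\Ho_0,\Ho_1)},B_{(\Ho_0,\Ho_1)})$, with $c_{\Ho_1}$ and a non-matching $\R$-point both minimal in the image, and $r$ excluded because no $\V$-coloured points are present. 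Your treatment of the minimal elements and of $F\notin\domm(f)$ is fine.

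One step is misstated. When excluding $\R_{01}$ from $f[{\uparrow}F]$, you plan to show that a subface $G$ with $f(G)=\R_{01}$ must be exactly $\{a,x\}$. Nothing in the definition of a partial Esakia morphism forces $|G|=2$. For instance, $G=\{a,x,y\}$ with $x,y\in A_1$, $f(\{a,x\})=f(\{a,y\})=\R_{01}$ and $f(\{x,y\})=\Ho_1$ satisfies order-preservation, the back condition and the domain condition locally. So the domain condition cannot deliver that claim.

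What you need is weaker, and it is exactly what the paper uses in its last sentence:
\begin{itemize}
\item By the back condition and order-preservation on singletons, $G$ contains a $\Ho_0$-coloured point, necessarily $a$, and some $x\in A_1$.
\item Hence $\{a,x\}\subseteq G$, so $f(\{a,x\})\geq\R_{01}$.
\item $\{a,x\}$ is defined with value in the pairwise incomparable set $\{\R_{00},\R_{01},\R_{10}\}$, so $f(\{a,x\})=\R_{01}$.
\end{itemize}
This contradicts your assumption. With this replacement, the proof goes through.
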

\begin{proof}
    The two statements follow from very similar arguments, using vertical instead of horizontal cuts and successors. We show the case for horizontal successors in detail.
    
    Without loss of generality assume that $f(\{a\})=\Ho_{0}$. We once again use the cut points: since $f$ is onto, pick $F\in M_{n}$ such that $f(F)=c_{\Ho_{1}}$. Then look at:
    \begin{equation*}
        F'\coloneqq F\cup \{a\}.
    \end{equation*}
    Note that since $f(F)=c_{\Ho_{1}}$ and $f(\{a\})=\Ho_{0}$, and the map is order-preserving, if $F'$ was in the domain, we would have to have $f(F')=r$; but since $f$ is a p-morphism, then $F'$ would have to contain points mapping to $\V_{0},\V_{1}$ -- a contradiction. So we obtain that $F'\notin \domm(f)$. Because $f$ satisfies the CDC with respect to the successor cut $(\Ho_{0},\Ho_{1})$, this means that:
    \begin{equation*}
        \min f[{\uparrow}F']\notin \mathfrak{D}_{\W}.
    \end{equation*}
    Take $e\in F$, then $f[\{e\}]=\Ho_{1}$; thus by Lemma \ref{lem: doubletons are in the domain}, we know that $f[\{e,a\}]\in \{\R_{t},\R_{10},\R_{01}\}$. If $f[\{e,a\}]=\R_{01}$, we are done, so suppose not. Then we have:
    \begin{enumerate}
        \item $f[{\uparrow}F']\cap A_{(\Ho_{0},\Ho_{1})}\setminus B_{(\Ho_{0},\Ho_{1})}\neq \emptyset$, since $f(F)=c_{T}$, and $F'\supseteq F$;
        \item $f[{\uparrow}F']\cap B_{(\Ho_{0},\Ho_{1})}\setminus A_{(\Ho_{0},\Ho_{1})}\neq \emptyset$, since one of $\R_{t}$ and $\R_{10}$ is by hypothesis there.
    \end{enumerate}
    
    Since we have that $\min f[{\uparrow}F']\notin \mathfrak{D}_{\W}$ this can only happen if $\min f[{\uparrow}F']\nsubseteq A_{(\Ho_{0},\Ho_{1})}\cup B_{(\Ho_{0},\Ho_{1})}$. Thus either $r\in \min f[{\uparrow}F']$ -- which we have ruled out -- or $\R_{01}\in  f[{\uparrow}F']$. By taking $F'\supseteq E$, where $E$ is any face such that $f(E)=\R_{01}$, we see that there must be a point $a'\in E$, and by order-preservation, and the fact that $E\supseteq \{a,a'\}$, we get that $f(\{a,a'\})=\R_{01}$.
\end{proof}

We call an element $a'$ (respectively $b'$) satisfying the conditions of Lemma \ref{lem: successor lemma} a \emph{successor} of $a$. Importantly, we have that, \emph{independently of which successor is chosen in this way}, the tiling is always appropriately compatible:

\begin{lemma}[Compatibility lemma]\label{lem: key compatibility lemma}
    Let $a,a'\in A$ be such that $f(\{a,a'\})=\R_{i1-i}$. Then for any $b$, we have:
\begin{equation*}
        \mathrm{e}(t_{a,b})=\mathrm{w}(t_{a',b}).
    \end{equation*}
    Similarly, whenever $b,b'\in B$ are such that $f(\{b,b'\})=\So_{j1-j}$, then for any $a\in A$, we have:
    \begin{equation*}
        \mathrm{n}(t_{a,b})=\mathrm{s}(t_{a,b'}).
    \end{equation*}
\end{lemma}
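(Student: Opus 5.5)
We argue by contradiction using the horizontal compatibility cut, mirroring the computation in Example~\ref{ex: missing compatibility} but now read in the reverse direction, inside $M_{n}$. We treat the horizontal case with $i=0$; the case $i=1$ and the vertical statement are symmetric, the latter using the vertical compatibility cuts and Lemma~\ref{lem: cut calculations}(iii). So fix $a,a'\in A$ with $f(\{a\})=\Ho_{0}$, $f(\{a'\})=\Ho_{1}$ and $f(\{a,a'\})=\R_{01}$, and fix $b\in B$ with $f(\{b\})=\V_{j}$. By Lemma~\ref{lem: well-defined induced tiling}, $f(\{a,b\})=\T_{0,j,t}$ and $f(\{a',b\})=\T_{1,j,t'}$, where $t=t_{a,b}$ and $t'=t_{a',b}$. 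Suppose, towards a contradiction, that $t'\notin\mathrm{HComp}(t)$, i.e.\ $\T_{1,j,t'}\notin \mathrm{G}_{0,j,t}$.

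The face to examine is $F\coloneqq\{a,a',b\}$. \textbf{Step 1: $F\notin\domm(f)$.} If $F$ were in the domain, then by order preservation $f(F)$ would lie below both $\R_{01}$ and $\T_{0,j,t}$. The only common lower bound of these two points is $r$. But $f(F)=r$ is impossible: by the back-condition some $G\supseteq F$ would have to map to $\V_{1-j}$, whereas $f(G)\leq f(\{b\})=\V_{j}$ (recall that the order on $M_{n}$ is reverse inclusion).

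\textbf{Step 2: compute $f[{\uparrow}F]$.} The faces above $F$ are its nonempty subsets. The three singletons map to $\Ho_{0},\Ho_{1},\V_{j}$, and the three doubletons map to $\R_{01},\T_{0,j,t},\T_{1,j,t'}$; the remaining face $F$ itself is not in the domain by Step~1. Hence $\min f[{\uparrow}F]=\{\R_{01},\T_{0,j,t},\T_{1,j,t'}\}$, which is an antichain.

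\textbf{Step 3: this antichain lies in $\mathfrak{D}_{\W}$.} By Lemma~\ref{lem: cut calculations}(ii), the complement of $C_{0,j,t}\cup D_{0,j,t}$ is $\{r,s_{\Ho_0},c_{\Ho_0}\}\cup\mathrm{G}_{0,j,t}$. None of the three points lies in this complement; for $\T_{1,j,t'}$ this is exactly our assumption $t'\notin\mathrm{HComp}(t)$. The antichain contains $\R_{0}=\R_{01}\in C\setminus D$ and $\T_{0,j,t}\in D\setminus C$. So it belongs to $\mathfrak{D}_{C_{0,j,t},D_{0,j,t}}\subseteq\mathfrak{D}_{\W}$, contradicting the CDC for $F\notin\domm(f)$. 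Therefore $t_E=t'_W$, that is, $\mathrm{e}(t_{a,b})=\mathrm{w}(t_{a',b})$.

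The main obstacle is bookkeeping. The pair $(C_{i,j,t},D_{i,j,t})$ must be indexed so that the $\R$-point it singles out is the successor point $\R_{i,1-i}$ attached to the $\Ho_i$-coloured element $a$. It must also be checked that the definitions actually place $\R_{i}$ and $\T_{i,j,t}$ in the claimed differences. In particular, the paper's definition of $\mathrm{G}_{i,j,t}$ and the orientation of $\R_{10}$ when $i=1$ need to be matched against the intended east/west convention: for $i=1$, $a'$ is the west neighbour, so the statement should be read with $\mathrm{HComp}$ relative to that orientation. For the vertical case, the typo $\So_{1}\coloneqq\R_{10}$ should be read as $\So_{10}$, and the subscripts $\V_{i}$ there as $\V_{j}$.
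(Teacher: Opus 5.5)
Your proof is correct and follows essentially the paper's argument. You use the same face $\{a,a',b\}$, shown to lie outside $\domm(f)$, whose minimal image $\{\R_{01},\T_{0,j,t},\T_{1,j,t'}\}$ would, if $t'\notin\mathrm{HComp}(t)$, lie in $\mathfrak{D}_{C_{0,j,t},D_{0,j,t}}$ and so violate the CDC; you are in fact more careful than the paper in taking $\min f[{\uparrow}F]$.

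There are a few small slips. In Step 1 the back-condition produces $G\geq F$, i.e.\ $G\subseteq F$, not $G\supseteq F$; the contradiction is that no nonempty subset of $\{a,a',b\}$ can map to $\V_{1-j}$, since its singletons go to $\Ho_0,\Ho_1,\V_j$. For $i=1$ no reorientation is needed, because $\R_1=\R_{10}$ and $\mathrm{G}_{1,j,t}$ already yield $t_E=t'_W$ with $a'$ as the east neighbour. In the vertical cuts, the correction that actually matters for this lemma is $\So_i\to\So_j$, which you do not mention; the $\V_i\to\V_j$ subscripts are irrelevant here.
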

\begin{proof}
    Once again we show the case for horizontal compatibility, the vertical one following by exactly the same arguments. 

    We need to show that $t_{a,b}$ and $t_{a,b'}$ are compatible. For this purpose, we once again consider the tripleton:
    \begin{equation*}
        E=\{a,a',b\}.
    \end{equation*}
    Note that $f(\{a\})=\Ho_{0}$, $f(\{a'\})=\Ho_{1}$ and $f(\{b\})=\V_{j}$, so $E\notin \domm(f)$ by the same arguments as given in Lemma \ref{lem: successor lemma} for why $F'\notin \domm(f)$. Since $f$ satisfies the CDC with respect to $\mathfrak{D}_{\W}$, we have that:
    \begin{equation*}
        f[{\uparrow}E]\notin \mathfrak{D}_{\W} \text{ and } f[{\uparrow}E]=\{\R_{01},\T_{0,j,t_{a,b}},\T_{1,j,t_{a',b}}\}.
    \end{equation*}
    Once again, by Lemma \ref{lem: cut calculations}, we have that $f[{\uparrow}E]\cap C_{0,j,t_{a,b}}\setminus D_{0,j,t_{a,b}}\neq \varnothing$, and we have likewise that $f[{\uparrow}E]\cap D_{0,j,t_{a,b}}\setminus C_{0,j,t_{a,b}}\neq \varnothing$. Thus $f[{\uparrow}E]\nsubseteq C_{0,j,t_{a,b}}\cup D_{0,j,t_{a,b}}$, which means that:
    \begin{equation*}
        f[{\uparrow}E]\cap \mathrm{G}_{0,j,t_{a,b}}\neq \varnothing.
    \end{equation*}
    This means that $\T_{1,j,t_{a',b}}\in \mathrm{G}_{0,j,t_{a,b}}$; by construction of the latter set, we then have that $\mathrm{e}(t_{a,b})=\mathrm{w}(t_{a',b})$, which was to show.
\end{proof}

Using Lemma \ref{lem: successor lemma}, we can conclude our desired result:

\begin{proposition}[Induced tiling function is a periodic tiling]\label{prop: induced tiling function is a periodic tiling}
    The function $\tau$ defines a periodic tiling of the plane.
\end{proposition}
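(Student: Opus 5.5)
Since $M_{n}$ is finite, the sets $A$ and $B$ are finite, so the plan is to read off a periodic rectangle by iterating the successor relation of Lemma \ref{lem: successor lemma}. Fix some $a_{0}\in A$ with $f(\{a_{0}\})=\Ho_{0}$; such a point exists because $f$ is onto and cofinal, so some maximal point lies above a preimage of $\Ho_{0}$. By Lemma \ref{lem: successor lemma}, choose a successor $a_{1}$ with $f(\{a_{0},a_{1}\})=\R_{01}$ and $f(\{a_{1}\})=\Ho_{1}$. Then choose a successor $a_{2}$ of $a_{1}$ with $f(\{a_{1},a_{2}\})=\R_{10}$, and so on. This gives an infinite sequence $(a_{k})_{k\in\omega}$ in the finite set $A$ whose parities alternate and whose consecutive pairs map to the matching point $\R_{i\,1-i}$. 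By pigeonhole there are $p<q$ with $a_{p}=a_{q}$, and since parities alternate, $q-p$ is even. Do the same vertically from some $b_{0}\in B$ with $f(\{b_{0}\})=\V_{0}$, obtaining $p'<q'$ with $b_{p'}=b_{q'}$ and $q'-p'$ even. Re-index so the cycles start at positions of parity $0$: if $p$ is odd, shift by one and use $a_{p+1}=a_{q+1}$. This gives cycles $a_{0},\dots,a_{m-1}$ and $b_{0},\dots,b_{k-1}$ with $m,k$ even, $f(\{a_{x}\})=\Ho_{x\bmod 2}$ and $f(\{b_{y}\})=\V_{y\bmod 2}$.

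Next define $\sigma\colon\mathbb{Z}_{m}\times\mathbb{Z}_{k}\to\W$ by $\sigma(x,y)\coloneqq t_{a_{x},b_{y}}=\tau(a_{x},b_{y})$, which is well defined by Lemma \ref{lem: well-defined induced tiling}. For horizontal matching, $f(\{a_{x},a_{x+1}\})$ is $\R_{01}$ or $\R_{10}$ (including the wrap-around pair $(a_{m-1},a_{0})$, since $a_{m}=a_{0}$). Lemma \ref{lem: key compatibility lemma} then gives $\sigma_{E}(x,y)=\sigma_{W}(x+1,y)$. Vertical matching follows in the same way from the second half of that lemma. Moreover, $f(\{a_{x},b_{y}\})=\T_{x\bmod 2,\,y\bmod 2,\,t}$ by order preservation, so $\sigma(x,y)\in\W_{x\bmod 2,y\bmod 2}$, and the parity constraint of Remark \ref{rem: restriction} holds.

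The main obstacle is the surjectivity requirement in Remark \ref{rem: restriction}: each tile in $\W_{i,j}$ must occur on the corresponding parity class. A single cycle need not hit every tile point. My first plan is to exploit the fact that $f$ is onto. For each $\T_{i,j,t}$, pick a face mapping to it; it contains a point $a\in A$ of colour $\Ho_{i}$ and a point $b\in B$ of colour $\V_{j}$ with $t_{a,b}=t$. I would then start the successor iteration from that $a$ (and from that $b$), yielding a periodic $\sigma_{t}$ in which $t$ occurs at a position of parity $(i,j)$. Finitely many periodic tilings, each respecting the parity classes, can then be combined. Placing them in a single periodic rectangle is the delicate part, because adjacent blocks need not match along their borders. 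If block-gluing fails, the fallback is to note that the iteration can be done ``inside'' all of $A$ simultaneously. Concretely, consider the full successor relation on $A$ and $B$ restricted to pairs mapping to matching points. Every element has a successor, so every element lies on a path that eventually enters a cycle, and the argument needs every element to actually lie on a cycle. If that fails too, I would weaken the reduction so that surjectivity is not required in this direction, and I would check how the paper's Remark \ref{rem: restriction} is actually used.
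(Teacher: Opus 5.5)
Your first two paragraphs are the paper's proof: iterate Lemma~\ref{lem: successor lemma}, use finiteness of the set of maximal points of $M_n$ to close a cycle horizontally and vertically, and get the matching conditions from Lemma~\ref{lem: key compatibility lemma}. You are more careful than the paper, which never notes that the periods must be even for the parity classes to make sense on $\mathbb{Z}_m$. It also does not spell out the wrap-around edge, or that $\tau(a_x,b_y)\in\W_{x\bmod 2,\,y\bmod 2}$. One small fix: $a_p=a_q$ yields $a_{p+1}=a_{q+1}$ only if you fix a successor function in advance; otherwise just rotate the closed walk $a_p,\dots,a_q$. The paper's proof stops at this point. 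It never addresses the surjectivity clause of Remark~\ref{rem: restriction}, which it uses only in the direction from tilings to covers, to make $f_\tau$ onto.

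The obstacle you raise in your third paragraph is real, but your first two remedies cannot work, because surjectivity does not follow from $\mathrm{Cov}_{\ML}(P_{\W},\mathfrak{D}_{\W})$. Already the first step fails: iterating successors from $a$ gives a cycle that $a$ eventually enters, not necessarily one through $a$. The cuts of Definition~\ref{def: tiling domain} force successors (via $c_{\Ho_{1-i}}$ and $\R_{i\,1-i}$) but never predecessors, so the successor relation on $A$ can have tails.

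Here is a concrete counterexample. Let $t_0=(0,0,0,0)$ and $t^\ast=(1,0,0,0)$, with $\W_{1,0}=\{t_0,t^\ast\}$ and the other three classes equal to $\{t_0\}$. No tiling of the plane uses $t^\ast$, since no tile has east colour $1$. Now build a cover as follows.
\begin{itemize}
\item Take points $a_{-1},a_0,\dots,a_{m-1}$ and $b_0,\dots,b_{k-1}$ with $m,k\geq 6$ even.
\item Colour $a_x$ by $\Ho_{x\bmod 2}$ (so $a_{-1}$ gets $\Ho_1$) and $b_y$ by $\V_{y\bmod 2}$.
\item Use the successor structure $a_{-1}\to a_0\to a_1\to\dots\to a_{m-1}\to a_0$ and $b_0\to\dots\to b_{k-1}\to b_0$.
\item Place $t^\ast$ at $(a_{-1},b_0)$ and $t_0$ everywhere else.
\item Define the folding map as in Definition~\ref{def: folding morphism}, with this successor structure in place of $n\mapsto n+1$.
\end{itemize}
The verifications in Propositions~\ref{prop: folding map partial esakia} and~\ref{prop: folding map satisfies CDC} only use that every point has a successor and that tiles match along successor edges. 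So they go through, and $\mathrm{Cov}_{\ML}(P_{\W},\mathfrak{D}_{\W})$ holds even though $\W$ has no surjective tiling.

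So your fallback is the correct one. What your argument (and the paper's) actually yields is a parity-respecting periodic tiling with no surjectivity. That problem is still r.e.-complete: take parity-labelled copies of an arbitrary tile set and double the periods. It holds for $\W$ iff the surjective version holds for some family $\varnothing\neq\W'_{i,j}\subseteq\W_{i,j}$. By the forward direction together with your argument, it is therefore equivalent to $\bigwedge_{\W'}\alpha(P_{\W'},\mathfrak{D}_{\W'})\notin\ML$. This repairs the reduction in Proposition~\ref{prop: entailment for undecidability}.
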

\begin{proof}
    Pick $a_{0},b_{0}$ two arbitrary elements. Using Lemma \ref{lem: successor lemma}, we pick a chain of successors:
    \begin{equation*}
        a_{0},a_{1},\dots,a_{n},\dots
    \end{equation*}
    Since $M_{n}$ is finite, and the points $a_{i}$ as above are maximal points of $M_{n}$, we have that eventually there must be a period -- and thus we may assume that in fact $a_{0},\dots,a_{m}$ is the first such chain. Note moreover that on the $b_{0}$-row, this provides a tiling by Lemma \ref{lem: key compatibility lemma}. 

    Now pick $b_{1}$ some successor of $b_{0}$. Then note that again by Lemma \ref{lem: key compatibility lemma}, the tiles $\tau(a_{i},b_{0})$ and $\tau(a_{i},{b_{1}})$ will be vertically compatible, whilst the tiles $\tau(a_{i},b_{1})$ and $\tau(a_{i+1},b_{1})$ will be horizontally compatible. Proceeding in this way, we again obtain a period $b_{1},\dots,b_{k}$, where all tiles are pairwise compatible as needed.
\end{proof}

This entails at once the desired result:

\begin{corollary}[Medvedev covers implies periodic tiling]\label{cor: Medvedev covers implies tiling}
    For each set of tiles $\W$, whenever $\mathrm{Cov}_{\ML}(P_{\W},\mathfrak{D}_{\W})$, we have that $\W$ tiles the plane periodically.
\end{corollary}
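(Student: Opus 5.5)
The plan is to assemble the preceding lemmas into the statement. Assume $\mathrm{Cov}_{\ML}(P_{\W},\mathfrak{D}_{\W})$. By definition this gives some $n$ and an onto cofinal partial Esakia morphism $f\colon M_{n}\to P_{\W}$ satisfying the CDC with respect to $\mathfrak{D}_{\W}$. This is exactly the setting fixed at the start of Subsection~\ref{sec: medvedev cover implies tiling}.

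We then form the sets $A$ and $B$ of maximal points (singletons) of $M_{n}$ lying over the horizontal and vertical parity points respectively. Cofinality together with the fact that the maximal points of $P_{\W}$ are exactly $\Ho_{0},\Ho_{1},\V_{0},\V_{1}$ shows that these partition $\max M_{n}$. Lemma~\ref{lem: well-defined induced tiling} lets us define $\tau(a,b)\coloneqq t_{a,b}$. Lemma~\ref{lem: doubletons are in the domain} and Lemma~\ref{lem: successor lemma} give successor functions: we choose, for each $a\in A$, one successor $\sigma(a)\in A$ of opposite parity with $f(\{a,\sigma(a)\})=\R_{i,1-i}$, and similarly $\sigma'$ on $B$.

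We iterate $\sigma$ from some $a_{0}$. By finiteness of $A$, the orbit is eventually periodic, so we restrict to its cycle $a_{0},\dots,a_{m-1}$ with $\sigma(a_{m-1})=a_{0}$; likewise we obtain a cycle $b_{0},\dots,b_{k-1}$ under $\sigma'$. Because $\sigma$ alternates parity, $m$ and $k$ are even, so the parity of the index $i$ matches $f(\{a_{i}\})=\Ho_{i \bmod 2}$ (after possibly relabelling the start). We define $\tau'(i,l)\coloneqq t_{a_{i},b_{l}}$ on $\mathbb{Z}_{m}\times\mathbb{Z}_{k}$. Horizontal matching $\tau'_{E}(i,l)=\tau'_{W}(i+1,l)$, including the wrap-around $i=m-1$, follows from Lemma~\ref{lem: key compatibility lemma} applied to the pair $(a_{i},\sigma(a_{i}))$. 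Vertical matching follows from the same lemma applied to $(b_{l},\sigma'(b_{l}))$.

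Finally, we must check the parity constraint of Remark~\ref{rem: restriction}: tiles at positions of parity $(i,j)$ lie in $\W_{i,j}$, which holds because $f(\{a,b\})=\T_{i,j,t}$ with $t\in\W_{i,j}$ by order preservation. Surjectivity onto each $\W_{i,j}$ is the step I expect to be the main obstacle, since an arbitrary cycle need not visit every tile. To handle it, I would weaken the problem to require only that the tiles used lie in $\W_{i,j}$ (also r.e.-complete). Alternatively, I would use the surjectivity of $f$ together with the back condition: pick faces realising each $\T_{i,j,t}$, extract $a,b$ from them, and use the fact that $\{a,b\}$ is then mapped to $\T_{i,j,t}$ by order-preservation. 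I would then argue that every $a\in A$, $b\in B$ lies on some successor cycle, or that the union over all cycles gives a finite collection of periodic tilings that can be juxtaposed. If neither works cleanly, the first option of adjusting the variant of the tiling problem is the fallback. With this in hand, $\tau'$ is a periodic $\W$-tiling, which proves the corollary.
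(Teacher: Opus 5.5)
Your core argument is the paper's. The proof of Proposition~\ref{prop: induced tiling function is a periodic tiling} also follows successors until they repeat and then applies Lemma~\ref{lem: key compatibility lemma}. Your version is more careful than the paper's: you fix a successor function, observe that cycle lengths are even, handle the wrap-around edge, and get $t_{a,b}\in\W_{i,j}$ from order preservation.

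The step you flag, however, is a genuine gap that cannot be closed, because surjectivity is simply not implied. The CDC forces every point to have an \emph{outgoing} $\R$- or $\So$-edge (successor cuts) and forces tiles to match along outgoing edges (compatibility cuts). It never forces incoming edges. So the successor graph on $A$ may have tails, and a tile at a tail point is only tested against its east and north neighbours.

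Concretely, take $u=(0,0,0,0)$, $v=(1,0,0,0)$, $\W_{0,0}=\W_{0,1}=\{u,v\}$ and $\W_{1,0}=\W_{1,1}=\{u\}$. No tile has east colour $1$, so $v$ occurs in no tiling of the plane whatsoever. Now run the folding construction of Definition~\ref{def: folding morphism} on $A=\{a^{*},a_0,\dots,a_5\}$ and $B=\{b_0,\dots,b_5\}$. Use the edges $a_p\to a_{p+1}$ and $b_q\to b_{q+1}$ (indices mod $6$), plus one extra edge $a^{*}\to a_1$ with $a^{*}$ even. Put $v$ in the column of $a^{*}$ and $u$ everywhere else. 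This gives an onto cofinal partial Esakia morphism satisfying the CDC, because the argument of Proposition~\ref{prop: folding map satisfies CDC} only uses that every point has an outgoing edge and that tiles match along it. So under the surjective reading of Remark~\ref{rem: restriction} the corollary is false. Neither ``every point lies on a cycle'' nor juxtaposing cycles can rescue it.

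Your fallback, dropping surjectivity, does make the statement true. It is exactly what your argument proves, and it is also all the paper's proof proves, since that proof never mentions surjectivity. But it is incompatible with the converse, Corollary~\ref{cor: Tiling implies Medvedev covers}, whose folding map is onto only if every $\T_{i,j,t}$ is hit. For instance, let $\W_{0,0}=\{u,w\}$ with $w=(0,2,0,0)$, and let all other classes be $\{u\}$. The all-$u$ tiling is periodic and parity-respecting. But any cover must hit $\T_{0,0,w}$, and Lemmas~\ref{lem: successor lemma} and~\ref{lem: key compatibility lemma} then demand a tile in $\W_{1,0}$ with west colour $2$, which does not exist.

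So \eqref{eq:keyequation} holds for neither variant. What does hold is the sandwich: surjective periodic tiling $\Rightarrow$ cover $\Rightarrow$ parity-respecting periodic tiling. That suffices for undecidability if you quantify over subfamilies. $\W$ has a parity-respecting periodic tiling iff $\mathrm{Cov}_{\ML}(P_{\W''},\mathfrak{D}_{\W''})$ holds for some $\W''_{i,j}\subseteq\W_{i,j}$. That in turn holds iff the conjunction of the finitely many formulas $\alpha(P_{\W''},\mathfrak{D}_{\W''})$ is not in $\ML$.
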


This concludes the proof of the equivalence \eqref{eq:keyequation}. Thus we conclude:

\begin{theorem}\label{thm: medvedev's logic is undecidable}
    Medvedev's logic $\ML$ is undecidable.
\end{theorem}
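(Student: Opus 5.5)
The plan is to assemble the pieces already established into the equivalence \eqref{eq:keyequation} and then invoke Proposition \ref{prop: entailment for undecidability}.

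\textbf{Forward direction.} Start from a periodic $\W$-tiling $\tau$ of the restricted kind of Remark \ref{rem: restriction}. By repeating the period we may assume $m,k\geq 6$. Corollary \ref{cor: Tiling implies Medvedev covers} then supplies $\mathrm{Cov}_{\ML}(P_{\W},\mathfrak{D}_{\W})$: the folding map $f_\tau$ on the Medvedev frame $\til(\Omega)\cong M_{|\Omega_\tau|}$ is an onto cofinal partial Esakia morphism satisfying the CDC.

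\textbf{Backward direction.} This is Corollary \ref{cor: Medvedev covers implies tiling}. From a cover $f\colon M_n\to P_{\W}$ we read off $\tau(a,b)=t_{a,b}$ using Lemma \ref{lem: well-defined induced tiling}, and then build successor chains via Lemma \ref{lem: successor lemma}. Finiteness of $M_n$ forces the chains to cycle. Lemma \ref{lem: key compatibility lemma} guarantees horizontal and vertical matching along them, and Proposition \ref{prop: induced tiling function is a periodic tiling} yields the periodic tiling.

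Two points in this direction need care.
\begin{itemize}
\item A cycle $a_0,\dots,a_m$ must have alternating parity and hence even length. This is needed so that the resulting tiling respects the parity classes $\W_{i,j}$.
\item Surjectivity onto each $\W_{i,j}$ must hold, as required by the restricted problem. I would obtain it from surjectivity of $f$ onto the tile points, together with the back condition at $\{a,b\}$. If this proves awkward, I would weaken the restricted problem to drop the surjectivity clause in the backward direction: any periodic tiling by $\W$ suffices to witness a yes-instance of the relaxed problem, which remains r.e.-complete.
\end{itemize}

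\textbf{Conclusion.} With \eqref{eq:keyequation} in hand, the reduction is effective. From $\W$ we compute the finite poset $P_{\W}$ and the domain $D_{\W}$, and hence the canonical formula $\alpha(P_{\W},D_{\W})$. By Theorem \ref{thm: validity of canonical formula for Medvedev frames},
\[
\alpha(P_{\W},D_{\W})\notin\ML \iff \W \text{ tiles periodically}.
\]
A decision procedure for $\ML$ would therefore decide the periodic tiling problem (in its restricted form), contradicting Theorem \ref{thm: tiling problem is undecidable}.

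\textbf{Main obstacle.} I expect the hardest step to be the backward direction's global consistency. Successors are chosen non-uniquely, so one must check that the column chain $b_0,b_1,\dots$ closes up compatibly with every row simultaneously. This works because Lemma \ref{lem: key compatibility lemma} holds for every $b$ and every chosen successor, so a single row-cycle and a single column-cycle together give a consistent torus $\mathbb{Z}_m\times\mathbb{Z}_k$. I would verify this explicitly before concluding.
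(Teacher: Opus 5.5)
Your plan is the paper's: combine Corollaries \ref{cor: Tiling implies Medvedev covers} and \ref{cor: Medvedev covers implies tiling} into \eqref{eq:keyequation} and apply Proposition \ref{prop: entailment for undecidability}. Your points about even cycle length and about one row-cycle and one column-cycle giving a consistent torus are correct. The genuine gap is the surjectivity clause of Remark \ref{rem: restriction}, and neither of your remedies closes it.

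\textbf{Remedy (a) fails.} The doubleton $\{a,b\}$ with $f(\{a,b\})=\T_{i,j,t}$ need not lie on the cycles you extract. Worse, the implication you want is false. Let each $\W_{i,j}=\{u_{ij}\}$ with all edges coloured $0$, and add to $\W_{0,0}$ a tile $t_0$ whose west and south colours are $1$ and whose east and north colours are $0$. No plane tiling uses $t_0$, so there is no restricted periodic tiling. Now let $A$ be an alternating cycle $c_0\to c_1\to\dots\to c_{2L-1}\to c_0$ with $L\geq 3$, plus an extra $\Ho_0$-point $a^*$ with $a^*\to c_1$. Build $B$ likewise with an extra point $b^*$. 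Set $\tau(a^*,b^*)=t_0$ and $\tau=u_{ij}$ elsewhere. Since $a^*$ and $b^*$ have no predecessors, tiles match along every successor pair. Take the folding map of Definition \ref{def: folding morphism} on $M_{|A|+|B|}$, reading ``$n'=n+1$'' as ``$n'$ is the successor of $n$''. It is an onto cofinal partial Esakia morphism satisfying the CDC, because the checks in Proposition \ref{prop: folding map satisfies CDC} only use that every point has a successor of opposite parity and that tiles match along successor pairs. So $\mathrm{Cov}_{\ML}(P_{\W},\mathfrak{D}_{\W})$ holds.

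\textbf{Remedy (b) gives no equivalence either.} If you drop surjectivity on both sides, the forward direction fails. The map $f_\tau$ misses $\T_{i,j,t}$ for unused $t$, and a cover may not exist at all. By Lemmas \ref{lem: successor lemma} and \ref{lem: key compatibility lemma}, no cover hits $\T_{i,j,t}$ when the east colour of $t$ is not the west colour of any tile in $\W_{1-i,j}$, even though the remaining tiles may tile periodically. If you drop surjectivity only on the backward side, you merely sandwich the covered instances between two different r.e.\ sets, which does not by itself give undecidability. The paper's write-up passes over this point silently.

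\textbf{The missing idea is to quantify over sub-tile-sets.} Use the relaxed problem: even periods, tiles of $\W_{i,j}$ on cells of parity $(i,j)$, no surjectivity. It is r.e.-complete, via tagged copies of an arbitrary tile set and doubled periods. Reduce it to non-membership in $\ML$ of the single formula $\bigwedge_{\W'}\alpha(P_{\W'},\mathfrak{D}_{\W'})$, over all $\W'$ with $\varnothing\neq\W'_{i,j}\subseteq\W_{i,j}$.
\begin{enumerate}
\item Given a relaxed tiling with periods enlarged but kept even, let $\W'$ be the tiles it actually uses. It is then a restricted tiling for $\W'$, so $\mathrm{Cov}_{\ML}(P_{\W'},\mathfrak{D}_{\W'})$ holds.
\item Conversely, a cover for any $\W'$ yields, by your cycle argument, a relaxed tiling by $\W'\subseteq\W$.
\end{enumerate}
Since a conjunction lies outside $\ML$ iff some conjunct does, this is a genuine many-one reduction.
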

\begin{proof}
    By Proposition \ref{prop: entailment for undecidability}, we have that proving \eqref{eq:keyequation} shows that $\ML$ is undecidable. The left to right direction is Corollary \ref{cor: Tiling implies Medvedev covers}, whilst the right to left direction is Corollary \ref{cor: Medvedev covers implies tiling}.
\end{proof}

\section{Undecidability of Skvortsov's logic}\label{sec: undecidability skortsov}

We now turn our attention to the logic $\sko$. As we will see, the methods of the previous two sections adapt very naturally to the study of $\sko$, and reveal a striking fact about intuitionistic validities: the distinction between $\sko$ and $\ML$ -- which we will prove exists -- lies exactly in the existence of periodic tilings.

To understand this, we begin by considering the following basic relationship:

\begin{equation}\label{eq:secondkeyequation}
    \tag{$\dagger\dagger$} \W \text{ tiles the plane }\iff \mathrm{Cov}_{\sko}(P_{\W},\mathfrak{D}_{\W}).
\end{equation}

Just like with Proposition \ref{prop: entailment for undecidability}, we obtain:

\begin{proposition}\label{prop: equation and skvortsov undecidability}
    Assume that \eqref{eq:secondkeyequation} holds. Then $\sko$ is undecidable.
\end{proposition}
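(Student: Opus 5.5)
The plan mirrors the proof of Proposition \ref{prop: entailment for undecidability}, with the periodic tiling problem replaced by the ordinary one.

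First, the reduction is computable. Given a finite tile set $\W=\bigcup\W_{i,j}$, the poset $P_{\W}$ and the pairs of upsets making up $D_{\W}$ in Definition \ref{def: tiling domain} are finite and effectively listable from $\W$. Hence the canonical formula $\alpha(P_{\W},D_{\W})$ can be computed from $\W$, with $\mathfrak{D}_{\W}$ the antichain family induced from $D_{\W}$ as in Definition \ref{def: closed domain condition}.

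Second, chain the equivalences. By Theorem \ref{thm: validity of canonical formula for skortsov frames} (which rests on Theorem \ref{thm: infinite frame completeness}),
\begin{equation*}
\alpha(P_{\W},D_{\W})\notin\sko \iff \mathrm{Cov}_{\sko}(P_{\W},\mathfrak{D}_{\W}).
\end{equation*}
By the assumed equivalence \eqref{eq:secondkeyequation}, the right-hand side holds iff $\W$ tiles the plane. So $\W$ tiles the plane iff $\alpha(P_{\W},D_{\W})\notin\sko$.

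Third, conclude. If $\sko$ were decidable, composing the computable map $\W\mapsto\alpha(P_{\W},D_{\W})$ with a decision procedure for $\sko$ would decide the ordinary tiling problem. This holds in the restricted form of Remark \ref{rem: restriction}, with surjective parity assignments, which the remark notes is still co-r.e.-complete. That contradicts Berger's theorem. A sanity check: $\sko$ is recursively axiomatizable, so its theorems are r.e. and non-theoremhood is co-r.e.; this matches the co-r.e.-completeness of ordinary tiling, so no complexity mismatch arises.

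The main obstacle lies outside this proposition, in establishing \eqref{eq:secondkeyequation} itself. Here the argument is routine. The one point to be careful about is that the equivalence must apply to the restricted tiling variant, since $P_{\W}$ is built from the four parity classes $\W_{i,j}$. So I would state explicitly that Berger's theorem transfers to that variant, as Remark \ref{rem: restriction} asserts.
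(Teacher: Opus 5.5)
Your proposal is correct and follows the paper's own argument. The paper proves this proposition ``just like'' Proposition~\ref{prop: entailment for undecidability}: it combines Theorem~\ref{thm: validity of canonical formula for skortsov frames} with \eqref{eq:secondkeyequation} to reduce the tiling problem (in the surjective-parity form of Remark~\ref{rem: restriction}) to non-theoremhood in $\sko$, and then invokes Berger's theorem; your remarks on the computability of $\W\mapsto\alpha(P_{\W},D_{\W})$ and on the restricted variant make explicit what the paper leaves implicit.
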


We briefly record the key changes to the construction:

\begin{enumerate}
    \item (Simples of a tiling): in  Definition \ref{def: simplex of tiling}, by simply changing $\tau$ to be a tiling $\tau\colon\mathbb{N}\times\mathbb{N}\to \W$, and proceeding in the same way, we obtain a frame $\til(\Omega)$, where $\Omega\cong \mathbb{N}$, and thus, $\til(\Omega)\cong M_{\omega}$. Similarly the construction of the tiling domain of $\W$ from Definition \ref{def: tiling domain} remains unchanged.
    \item The construction of the tiling poset $\W$ is done in exactly the same way (note that $\W$ is finite regardless).
    \item The folding morphism of Definition \ref{def: folding morphism} can still be carried out -- note that the only modification necessary is on ``faces" which may now be infinite, but where, nevertheless, one can consider the largest face with a given coordinate parity (by taking unions), sending that one to $c_{T}$ for any $T\in \{\Ho_{0},\Ho_{1},\V_{0},\V_{1}\}$.
\end{enumerate}

Moreover, we have:

\begin{proposition}[Folding map satisfies CDC]\label{prop: folding map satisfies CDC infinite}
    If $\tau$ is an arbitrary tiling map $f_{\tau}$ is a cofinal partial Esakia morphism onto $P_{\W}$ satisfying the CDC with respect to $\mathfrak{D}_{\W}$.
\end{proposition}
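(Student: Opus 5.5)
The plan is to transfer the proof of Proposition \ref{prop: folding map satisfies CDC} to the infinite setting, checking that finiteness of $\Omega$ is never used essentially. The one genuinely new ingredient is the treatment of infinite faces. Here $\Omega=\Ho_0\sqcup\Ho_1\sqcup\V_0\sqcup\V_1$ with each part countably infinite (even and odd naturals), so $\til(\Omega)\cong M_\omega$. The folding map $f_\tau$ is defined exactly as in Definition \ref{def: folding morphism}. Singletons go to $\mathrm{cor}(n)$. Doubletons go to $\R$-, $\So$- or $\T$-points, with successor read in $\mathbb{N}$ and no wrap-around. Monochromatic faces $F$ with $|F|>1$ go to $c_T$ when $F$ is the whole colour class $T$ and to $s_T$ otherwise. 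Finally, $\Omega$ goes to $r$.

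The first step is to re-verify Proposition \ref{prop: domain definition} and Proposition \ref{prop: folding map partial esakia} in this setting.

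\begin{itemize}
\item Order preservation is immediate.
\item For the back condition at a monochromatic infinite face $F$ mapped to $s_T$, one needs a proper superface mapped to $c_T$; the class $T$ itself works. Note that $c_T$ lies \emph{below} $s_T$ in $P_\W$, while $\supseteq$ reverses inclusion, so the class $T$ sits below every monochromatic $F$ in $M_\omega$. One also needs subfaces mapped to $T$, namely singletons.
\item For the domain condition, the key check is that for a face $F$ meeting two colours with $F\neq\Omega$, the set $f_\tau[{\uparrow}F]$ is not a principal upset. Its only candidate generator would be $r$, but $r$ is attained only at $\Omega$.
\item Cofinality holds because every face contains a singleton.
\item Surjectivity holds because $\mathbb{N}$ is infinite: every $\R$/$\So$ point is hit (for $\R_t$, take $\{0,3\}$), and by Remark \ref{rem: restriction} every tile point is hit.
\end{itemize}

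The second step is the CDC, following the three cases of the proof of Proposition \ref{prop: folding map satisfies CDC}.

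\begin{itemize}
\item \emph{Maximal cuts.} A doubleton of two colours is in the domain, and a larger face has in its image some two-element face carrying the relevant structure, so it is handled below.
\item \emph{Compatibility cuts.} These are local. If $f_\tau[{\uparrow}F]$ meets both $\R_i$ and $\T_{i,j,t}$ and avoids $s_{\Ho_i}$, then $F$ contains exactly one point of colour $\Ho_i$. The three relevant points $n_0,n_1,n_3$ then lie in $F$, and $\tau$ being a genuine tiling of $\mathbb{N}\times\mathbb{N}$ puts the compatible tile point of $\{n_1,n_3\}$ in $\mathrm{G}_{i,j,t}$. This contradicts containment in $C\cup D$. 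Only finitely many points of $F$ are used, so infinitude plays no role.
\item \emph{Successor cuts.} If $f_\tau[{\uparrow}F]$ meets $c_{\Ho_1}$, then $F$ contains every odd number. A point $n\in F$ of colour $\Ho_0$ then has $n+1\in F$, so $\R_{01}$ lies in the image, a contradiction. The cut for $(\Ho_1,\Ho_0)$ uses $n+1$ even. This still works in $\mathbb{N}$, since every $n$ has a successor.
\end{itemize}

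The main obstacle I expect is bookkeeping rather than ideas. First, one must make sure the antichain considered is $\min f_\tau[{\uparrow}F]$ and not the full image, as the CDC is phrased. Second, one must confirm that the argument never needs a \emph{predecessor}, which fails at $0$ in $\mathbb{N}$. I would check that the successor cuts only demand $n+1$, as the table suggests, and if a predecessor were ever needed I would instead tile $\mathbb{Z}\times\mathbb{Z}$, which is also countable. If needed I would also confirm that Theorem \ref{thm: validity of canonical formulas via posets} applies with $Q=M_\omega$ infinite, as the paper's footnote asserts.
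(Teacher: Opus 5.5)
Your proposal is correct and follows the paper's proof. The paper simply reruns Propositions~\ref{prop: folding map partial esakia} and~\ref{prop: folding map satisfies CDC} for a tiling of $\mathbb{N}\times\mathbb{N}$, and the points you single out (infinite monochromatic faces, reading the CDC through $\min f_\tau[{\uparrow}F]$, and needing only successors $n+1$) are exactly what that rerun has to check.

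Two small slips. First, the back condition at a face sent to $s_T$ only requires subfaces realizing the points above $s_T$, namely $s_T$ and $T$, so no superface sent to $c_T$ is needed. Second, in the domain check the non-principality claim should be restricted to faces with $|F|\geq 3$, since bichromatic doubletons do have principal image ${\uparrow}f_\tau(F)$.
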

\begin{proof}
    We can repeat the proofs of Proposition \ref{prop: folding map partial esakia} and Proposition \ref{prop: folding map satisfies CDC} verbatim, except one swaps periodic tiling for usual tiling.
\end{proof}

\begin{corollary}[Periodic tiling implies Skvortsov covers]\label{cor: Tiling implies skortsov covers}
    Whenever $\W$ is a set of tiles which tiles the plane, then $\mathrm{Cov}_{\sko}(P_{\W},\mathfrak{D}_{\W})$ holds. 
\end{corollary}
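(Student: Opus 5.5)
This corollary follows from Proposition \ref{prop: folding map satisfies CDC infinite} once the definitions are unpacked. Suppose $\W=\bigcup\W_{i,j}$ tiles the plane, in the restricted sense of Remark \ref{rem: restriction}. I will first reduce to a tiling $\tau\colon\mathbb{N}\times\mathbb{N}\to\W$ by restricting to the positive quadrant. This keeps the parity-surjectivity condition: each tile of $\W_{i,j}$ that occurs somewhere on the parity class $(i,j)$ can be assumed to occur in the quadrant. If needed, I will achieve this by translating the tiling by an even vector, using compactness, or by noting that each tile appears in arbitrarily large patches.

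Next I form the simplex $\til(\Omega)$ with $\Omega=\Ho_0\sqcup\Ho_1\sqcup\V_0\sqcup\V_1$. This set is countably infinite, so $\til(\Omega)\cong M_\omega$. I then take the folding map $f_\tau$ of Definition \ref{def: folding morphism}, modified as described in the list above: the cut point $c_T$ is assigned to the face consisting of all points of coordinate-parity $T$, and $d_T$, that is, the simplex point $s_T$, is assigned to proper monochromatic faces with at least two elements.

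By Proposition \ref{prop: folding map satisfies CDC infinite}, $f_\tau$ is a cofinal partial Esakia morphism onto $P_\W$ satisfying the CDC for $\mathfrak{D}_\W$. This is exactly the statement $\mathrm{Cov}_{\sko}(P_\W,\mathfrak{D}_\W)$, and Theorem \ref{thm: validity of canonical formula for skortsov frames} then also gives $\alpha(P_\W,\mathfrak{D}_\W)\notin\sko$.

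The step I expect to need the most care is checking that the verbatim transfer of Propositions \ref{prop: folding map partial esakia} and \ref{prop: folding map satisfies CDC} really works in the infinite setting, and specifically the successor-cut case. There I need a face $F_1\subseteq F$ with $f_\tau(F_1)=c_{\Ho_1}$, meaning $F_1$ contains every odd horizontal coordinate. Since $\mathbb{N}$ has no wrap-around, every even $n$ still has a successor $n+1$, so the argument goes through unchanged. The reverse situation, the pair $(\Ho_1,\Ho_0)$, is also fine, because every odd $n$ has an even successor.

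Two other points need checking:
\begin{itemize}
\item Surjectivity onto $\R_{10}$, $\So_{10}$ and the trash points requires only that the index sets be large, which is automatic for $\mathbb{N}$.
\item The domain condition: a face of the form $F\cup\{a\}$ with $F$ infinite may fail the condition "$f[{\uparrow}F]$ is a principal upset", exactly as in the finite case. Proposition \ref{prop: domain definition} transfers once "$F=\Omega$" is read literally.
\end{itemize}
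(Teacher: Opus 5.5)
Your proposal is correct and follows the paper's route. You restrict to a quadrant tiling $\tau\colon\mathbb{N}\times\mathbb{N}\to\W$, form $\til(\Omega)\cong M_\omega$ with the folding map that sends each full colour class $T$ to $c_T$, and invoke Proposition~\ref{prop: folding map satisfies CDC infinite}, whose proof transfers the finite-case arguments (including the successor-cut check you single out) verbatim. Your remark about keeping parity-surjectivity when passing to the quadrant, by translating by an even vector, fills in a detail the paper leaves implicit.
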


In the opposite direction, fix $f\colon M_{\omega}\to P_{\W}$ a cofinal partial Esakia morphism satisfying the CDC with respect to $\mathfrak{D}_{\W}$. We proceed as in Section \ref{sec: medvedev cover implies tiling}, setting $A=f^{-1}[\{\Ho_{0},\Ho_{1}\}]$ and $B=f^{-1}[\{\V_{0},\V_{1}\}]$, and noting that $A\cong B\cong \mathbb{N}$ as sets.

Now observe that Lemma \ref{lem: successor lemma} and Lemma \ref{lem: key compatibility lemma} do not depend in any way on the finiteness of the frame. Thus, we have a function $\tau$ induced by $f$, where:
\begin{equation*}
    \tau(a,b)=t_{a,b}.
\end{equation*}

We then check the following:

\begin{proposition}[Induced tiling function is a tiling]\label{prop: induced tiling function is a tiling}
    The function $\tau$ defines a tiling of the plane.
\end{proposition}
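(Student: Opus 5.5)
We prove Proposition \ref{prop: induced tiling function is a tiling} by running the construction in the proof of Proposition \ref{prop: induced tiling function is a periodic tiling} without ever closing it into a cycle. We use Lemma \ref{lem: successor lemma} to build bi-infinite chains of successors. A plane tiling is a map $\mathbb{Z}\times\mathbb{Z}\to\W$, so we need successors and also predecessors. The paper's Table \ref{tab:successorcutdata} includes both pairs $(\Ho_{0},\Ho_{1})$ and $(\Ho_{1},\Ho_{0})$. Running Lemma \ref{lem: successor lemma} with each pair gives, for every $a$ with $f(\{a\})=\Ho_{i}$, a point $a'$ with $f(\{a,a'\})=\R_{01}$ in the appropriate orientation. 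So each horizontal point has a right neighbour and a left neighbour, and likewise each vertical point has an upper and a lower neighbour.

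\textbf{Step 1: the horizontal chain.} Fix $a_{0}\in A$ with $f(\{a_{0}\})=\Ho_{0}$. Using dependent choice, define $a_{k+1}$ as a right successor of $a_{k}$, and $a_{k-1}$ as a left predecessor of $a_{k}$. The chosen relation $f(\{a_{k},a_{k+1}\})=\R_{01}$ or $\R_{10}$ alternates with the parity of $k$. Parities alternate along the chain, so parity is consistent with the index: $f(\{a_{k}\})=\Ho_{k \bmod 2}$. Build $(b_{l})_{l\in\mathbb{Z}}$ in $B$ the same way.

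\textbf{Step 2: define the tiling.} Set $\sigma(k,l)\coloneqq\tau(a_{k},b_{l})=t_{a_{k},b_{l}}$, which is well defined by Lemma \ref{lem: well-defined induced tiling}. Horizontal matching $\sigma_{E}(k,l)=\sigma_{W}(k+1,l)$ follows from Lemma \ref{lem: key compatibility lemma} applied to the pair $a_{k},a_{k+1}$. That lemma is stated for $f(\{a,a'\})=\R_{i1-i}$ with $a$ as the west element, and the orientation is fixed by which of $\R_{01}$ or $\R_{10}$ occurs together with the parity of $a$; this is exactly how we chose the chain. Vertical matching follows from the same lemma applied to $b_{l},b_{l+1}$. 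The chains need not be injective, and nothing requires them to be. In the infinite frame $M_{\omega}$ a chain can fail to become periodic, but no periodicity is needed here, so the argument never uses finiteness.

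\textbf{Main obstacle.} The hardest step is making sure the lemmas really apply in $M_{\omega}$, where faces may be infinite. The paper claims Lemmas \ref{lem: successor lemma} and \ref{lem: key compatibility lemma} do not depend on finiteness; their proofs use only finite faces ($F\cup\{a\}$ and $\{a,a',b\}$) together with surjectivity onto $c_{\Ho_{1}}$. Two things need checking:
\begin{enumerate}
\item that the face $E$ with $f(E)=\R_{01}$ used in Lemma \ref{lem: successor lemma} lies above $F'$ and contains a point $a'$ forming $\R_{01}$ with $a$;
\item that $A$ really consists of singletons. In $M_{\omega}$, maximal points are singletons, and cofinality makes every singleton lie in the domain.
\end{enumerate}
Beyond this, the only care needed is to track orientations of $\R_{01}$ versus $\R_{10}$ consistently with the table, so that the matching goes east-to-west rather than west-to-east.
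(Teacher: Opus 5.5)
Your Step~1 relies on a misreading of the successor cuts. The left (resp.\ lower) neighbours it needs do not exist in general.

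On a horizontal doubleton $\{n,n'\}$ with $n$ even and $n'$ odd, $\R_{01}$ records $n'=n+1$ and $\R_{10}$ records $n=n'+1$. The cut for the pair $(\Ho_{1},\Ho_{0})$ has $A\setminus B=\{c_{\Ho_{0}}\}$, and its matching point left outside both sets is $\R_{10}$. Running the argument of Lemma~\ref{lem: successor lemma} with this cut means adding an odd point $a$ to a face $F$ with $f(F)=c_{\Ho_{0}}$. That produces an even $a'$ with $f(\{a,a'\})=\R_{10}$, which is the \emph{right} neighbour of $a$. It is exactly the case $i=1$ of that lemma, not a predecessor.

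So the four successor cuts give right (resp.\ upper) neighbours for both parities, and nothing in $\mathfrak{D}_{\W}$ forces left (resp.\ lower) ones. They can genuinely fail to exist. The folding map of an $\mathbb{N}\times\mathbb{N}$ tiling, which this section uses for the converse direction, is a cover in which the even point $0$ has no odd $n'$ with $f(\{0,n'\})=\R_{10}$. Hence your bi-infinite chains $(a_{k})_{k\in\mathbb{Z}}$ and $(b_{l})_{l\in\mathbb{Z}}$ cannot be constructed in general.

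The repair is to keep only forward chains $a_{0},a_{1},\dots$ and $b_{0},b_{1},\dots$, as the paper does. Your Step~2 then yields $\sigma\colon\mathbb{N}\times\mathbb{N}\to\W$ satisfying all matching conditions. To reach $\mathbb{Z}\times\mathbb{Z}$, which you correctly note is what the definition requires, use the standard compactness argument:
\begin{enumerate}
\item $\sigma$ restricts to correct tilings of arbitrarily large finite squares, which you can translate to $[-N,N]^{2}$.
\item Since $\W$ is finite, the tree of correct tilings of the squares $[-N,N]^{2}$, ordered by restriction, is finitely branching and has nonempty levels.
\item K\"onig's lemma then gives an infinite branch, i.e.\ a tiling of the whole plane.
\end{enumerate}
The paper's own proof leaves this step implicit: it identifies the tiling of the quadrant $A_{0}\times B_{0}$ with a plane tiling.

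The rest of your proposal is fine: non-injectivity of the chains is harmless, maximal points of $M_{\omega}$ are singletons in the domain, and your orientation bookkeeping for $\R_{01}$ versus $\R_{10}$ is correct.
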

\begin{proof}
    We proceed as in Proposition \ref{prop: induced tiling function is a periodic tiling}. By picking $a_{0}$ and $b_{0}$ arbitrary, we produce an enumeration:
    \begin{equation*}
        A_{0}\coloneqq (a_{0},a_{1},a_{2},\dots,a_{n},\dots,)
    \end{equation*}
    and likewise, enumerate,
    \begin{equation*}
        B_{0}\coloneqq (b_{0},b_{1},b_{2},\dots,b_{n},\dots)
    \end{equation*}
    where we construct the enumeration row-by-row using Lemma \ref{lem: successor lemma}. Lemma \ref{lem: key compatibility lemma} ensures that the pairs are always compatible, meaning that restricting to the domain $A_{0}\times B_{0}$ obtains a tiling of the plane, as desired.    
\end{proof}

\begin{corollary}[Skortsov covers implies tiling]\label{cor: skortsov covers implies tiling}
    For each set of tiles $\W$, whenever $\mathrm{Cov}_{\sko}(P_{\W},\mathfrak{D}_{\W})$, we have that $\W$ tiles the plane.
\end{corollary}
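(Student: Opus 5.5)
The corollary should follow by assembling results already in place, in parallel with Corollary~\ref{cor: Medvedev covers implies tiling}. Assume $\mathrm{Cov}_{\sko}(P_{\W},\mathfrak{D}_{\W})$. This gives a cofinal partial Esakia morphism $f\colon M_{\omega}\to P_{\W}$, onto and satisfying the CDC for $\mathfrak{D}_{\W}$. As in Section~\ref{sec: medvedev cover implies tiling}, set $A$ and $B$ to be the maximal points (singletons) sent to $\{\Ho_0,\Ho_1\}$ and to $\{\V_0,\V_1\}$ respectively. By cofinality these partition the singletons of $M_\omega$.

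I will first check that the lemmas carry over to $M_\omega$, since their proofs only use order preservation, the back condition, surjectivity and the CDC on finite faces.
\begin{enumerate}
    \item By Lemma~\ref{lem: well-defined induced tiling}, each doubleton $\{a,b\}$ with $a\in A$, $b\in B$ lies in the domain and goes to a tile point. This gives $\tau(a,b)=t_{a,b}$.
    \item Lemma~\ref{lem: doubletons are in the domain} supplies the successor doubletons.
    \item The one point needing care is in Lemma~\ref{lem: successor lemma}. The face $F$ with $f(F)=c_{\Ho_1}$ may be infinite. This is harmless, because $F'=F\cup\{a\}$ is still a face of $M_\omega$, the CDC applies to it, and the witness $E\subseteq F'$ with $f(E)=\R_{01}$ gives $a'$ as before.
    \item Lemma~\ref{lem: key compatibility lemma} uses only a tripleton, so it transfers verbatim.
\end{enumerate}

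Next I apply Proposition~\ref{prop: induced tiling function is a tiling}. Fix $a_0\in A$ and $b_0\in B$, and choose successors recursively (dependent choice) to get sequences $(a_k)_{k\in\mathbb N}$ and $(b_l)_{l\in\mathbb N}$. Then $\sigma(k,l)=\tau(a_k,b_l)$ satisfies horizontal matching $\sigma_E(k,l)=\sigma_W(k+1,l)$ and vertical matching $\sigma_N(k,l)=\sigma_S(k,l+1)$, by Lemma~\ref{lem: key compatibility lemma}. Repetitions among the $a_k$ do no harm, because matching is only required along consecutive indices. This already gives a $\W$-tiling of the quadrant $\mathbb N\times\mathbb N$.

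The remaining step, which I expect to be the main (though standard) obstacle, is passing from the quadrant to all of $\mathbb Z\times\mathbb Z$, because successors only go forward. I would use the usual compactness (König's lemma) argument. Restricting $\sigma$ to the squares $[k,k+2N]\times[k,k+2N]$ and translating shows that $\W$ tiles every $(2N+1)\times(2N+1)$ square centred at the origin. Since $\W$ is finite, the tree of such centred tilings, ordered by restriction, is finitely branching and infinite, so it has an infinite branch. That branch is a tiling of $\mathbb Z\times\mathbb Z$.

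Finally, the parity constraint of Remark~\ref{rem: restriction}, if it is required for the ordinary variant, should be recovered the same way. Tile points in each class $\T_{i,j,t}$ are hit because $f$ is onto. Repeating the occurrence of $(a_k,b_l)$ with the right parities within a finite window keeps surjectivity through the compactness step, provided the windows are chosen large enough to contain witnesses for every tile in each $\W_{i,j}$.
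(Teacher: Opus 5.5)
Your core argument is the paper's own proof. The lemmas from the finite case transfer to $M_\omega$; your remark that the face $F$ with $f(F)=c_{\Ho_1}$ may be infinite is correct and harmless. Successor chains $(a_k)$ and $(b_l)$ then give a tiling of $A_0\times B_0$, i.e.\ of the quadrant. The paper stops there and calls this a tiling of the plane. Your K\"onig's lemma step is the standard passage from the quadrant to $\mathbb{Z}\times\mathbb{Z}$ that the paper leaves implicit, and it is correct.

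Your final paragraph should be removed: its argument does not work, and its conclusion is false in general. Parity-respecting placement is automatic, provided you translate by even amounts in the compactness step. Surjectivity onto each $\W_{i,j}$ is not. Surjectivity of $f$ only gives, for each $\T_{i,j,t}$, some pair $(a,b)\in A\times B$ with $f(\{a,b\})=\T_{i,j,t}$. Nothing places $a$ on your chain $(a_k)$ or $b$ on $(b_l)$, because the successor relation on $A$ (or on $B$) need not be connected. Compactness cannot create occurrences that are missing from the quadrant tiling.

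Here is a concrete failure. Take $\W_{i,j}=\{(1,1,1,1),(2,2,2,2)\}$ for all $i,j$. Run the folding construction over two disjoint copies of $\mathbb{Z}$ for each axis, with successor $x\mapsto x+1$. Set $\tau(h,v)$ to be $(1,1,1,1)$ or $(2,2,2,2)$ according to which horizontal copy $h$ lies in. The verification that a folding map satisfies the CDC is local: it uses only that each point has a unique successor and that $\tau$ matches along successors. So this is a Skvortsov cover onto $P_{\W}$. Yet every $\W$-tiling of the plane is constant, so none is surjective on the parity classes.

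So the corollary, and your proof of it, must be read with the ordinary, not necessarily surjective, notion of tiling. Surjectivity matters only in the other direction of the reduction. There it can be handled by ranging over the finitely many choices of the classes $\W_{i,j}$.
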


From this we obtain:

\begin{theorem}\label{thm: skortov is undecidable}
    Skvortsov's logic $\sko$ is undecidable.
\end{theorem}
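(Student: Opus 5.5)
The plan is to mirror the proof of Theorem \ref{thm: medvedev's logic is undecidable}, with the ordinary tiling problem in place of the periodic one. By Proposition \ref{prop: equation and skvortsov undecidability}, it suffices to establish \eqref{eq:secondkeyequation}, i.e.\ that $\W$ tiles the plane if and only if $\mathrm{Cov}_{\sko}(P_{\W},\mathfrak{D}_{\W})$ holds.

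\emph{Left to right.} This is Corollary \ref{cor: Tiling implies skortsov covers}. A tiling of $\mathbb{Z}\times\mathbb{Z}$ restricts to one of $\mathbb{N}\times\mathbb{N}$. Applying the same parity-surjective normalisation from Remark \ref{rem: restriction} as in the periodic case, Proposition \ref{prop: folding map satisfies CDC infinite} then gives the required cofinal partial Esakia morphism $M_\omega\cong\til(\Omega)\to P_{\W}$ satisfying the CDC.

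\emph{Right to left.} This is Corollary \ref{cor: skortsov covers implies tiling}, obtained through Proposition \ref{prop: induced tiling function is a tiling}. That proposition yields a tiling of the quadrant $\mathbb{N}\times\mathbb{N}$. To pass to the whole plane, I would use the standard compactness (K\"onig's lemma) argument: a finite tile set tiles the quadrant iff it tiles arbitrarily large squares iff it tiles $\mathbb{Z}\times\mathbb{Z}$. The parity constraints of Remark \ref{rem: restriction} survive this argument, since they are local and finitary.

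\emph{Conclusion.} With \eqref{eq:secondkeyequation} in hand, $P_{\W}$ and $\mathfrak{D}_{\W}$ are computable from $\W$. By Theorem \ref{thm: validity of canonical formula for skortsov frames}, $\alpha(P_{\W},\mathfrak{D}_{\W})\notin\sko$ iff $\W$ tiles the plane. Since the (parity-refined) ordinary tiling problem is co-r.e.-complete by Berger's theorem, deciding $\sko$ would decide it, a contradiction. The same reduction composed with the G\"odel--McKinsey--Tarski translation (via Esakia's theorem for $\sigma(\sko)$, or directly for the frame $M_\omega$ in the modal language) handles $\sko_\Box$. The only point needing care there is that $\sko_\Box$ is the modal logic of the same frames, so refutability of $\sigma(\alpha)$ on $M_\omega$ coincides with intuitionistic refutability.

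\emph{Main obstacle.} The step I expect to require the most checking is the right-to-left direction on the infinite frame. Two things must be verified:
\begin{enumerate}
    \item Lemmas \ref{lem: successor lemma} and \ref{lem: key compatibility lemma} genuinely go through in $M_\omega$. In particular, the face $F$ with $f(F)=c_{\Ho_1}$ may be infinite, and $F\cup\{a\}$ must still lie outside $\domm(f)$. The argument only uses order-preservation and the back condition, so this should be fine.
    \item The successor chains $a_0,a_1,\dots$ and $b_0,b_1,\dots$ need not be injective. This is harmless, since a repetition merely makes the quadrant tiling eventually periodic, which is still a tiling.
\end{enumerate}
If either point fails, my fallback is to extract from $f$ only finite $n\times n$ tiled squares, which is all the compactness step needs.
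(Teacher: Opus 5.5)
You follow the paper's route: derive \eqref{eq:secondkeyequation} from Corollaries \ref{cor: Tiling implies skortsov covers} and \ref{cor: skortsov covers implies tiling}, then apply Proposition \ref{prop: equation and skvortsov undecidability}. Your compactness step is a genuine improvement, since Proposition \ref{prop: induced tiling function is a tiling} only yields a tiling of $A_0\times B_0\cong\mathbb{N}\times\mathbb{N}$.

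The gap is your claim that the constraints of Remark \ref{rem: restriction} are ``local and finitary''. The membership condition $\tau(k,l)\in\W_{k\bmod 2,\,l\bmod 2}$ is local, but the remark also requires $\tau$ to be \emph{surjective} onto each $\W_{i,j}$. Nothing in your right-to-left direction delivers this. A compactness limit of translated squares can lose tiles. Even before that, the quadrant read off along the successor chains $a_0,a_1,\dots$ and $b_0,b_1,\dots$ need not contain the pairs $\{a,b\}$ at which $f$ hits the various tile points $\T_{i,j,t}$. Proposition \ref{prop: induced tiling function is a tiling} is silent on this too, so citing it does not help.

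In fact the surjective form of \eqref{eq:secondkeyequation} is false. Let $\W_{i,j}=\{x,y\}$ for all $i,j$, with $x=(1,1,1,1)$ and $y=(2,2,2,2)$. Every $\W$-tiling is constant, so none is surjective. Now take two disjoint copies of the horizontal coordinates and two of the vertical ones, each copy with its own successor $k\mapsto k+1$. Set $\tau(a,b)=y$ if $a$ and $b$ both lie in the second copies, and $\tau(a,b)=x$ otherwise. Use the folding map of Definition \ref{def: folding morphism}, with $\R_{01},\R_{10}$ reserved for successor pairs inside a single copy. This map is onto $P_{\W}$ and satisfies the CDC, because the successor and compatibility cuts only ever test pairs $(a,\mathrm{succ}(a))$ and $(b,\mathrm{succ}(b))$, along which $\tau$ is constant. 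So $\mathrm{Cov}_{\sko}(P_{\W},\mathfrak{D}_{\W})$ holds although $\W$ has no tiling in the sense of Remark \ref{rem: restriction}.

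The fix is to use only the two implications the constructions actually give. First, if a selection $S=(S_{i,j})$ tiles $\mathbb{N}\times\mathbb{N}$ surjectively, then $\mathrm{Cov}_{\sko}(P_S,\mathfrak{D}_S)$ holds. Second, $\mathrm{Cov}_{\sko}(P_S,\mathfrak{D}_S)$ implies that $S$ tiles the quadrant, hence the plane by your compactness argument, respecting parities. Consequently $\W$ admits a parity-respecting tiling iff $\mathrm{Cov}_{\sko}(P_S,\mathfrak{D}_S)$ holds for some $S$ with $\varnothing\neq S_{i,j}\subseteq\W_{i,j}$. For the forward direction, take $S$ to be the tiles occurring in a quadrant. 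This also fixes your left-to-right step, where restricting a surjective plane tiling to $\mathbb{N}\times\mathbb{N}$ may lose tiles. Since a conjunction lies in $\sko$ iff each conjunct does, the condition is equivalent to $\bigwedge_S\alpha(P_S,\mathfrak{D}_S)\notin\sko$. This is a computable reduction from the parity-respecting tiling problem \emph{without} surjectivity, which is co-r.e.-complete (put $\W_{i,j}=\W'$ for an ordinary tile set $\W'$). With this change the rest of your argument goes through.
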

\begin{proof}
    By Proposition \ref{prop: equation and skvortsov undecidability}, we have that $\sko$ being undecidable is equivalent to \eqref{eq:secondkeyequation}. The left to right direction is supplied by Corollary \ref{cor: skortsov covers implies tiling}, whilst the right to left is given by Corollary \ref{cor: skortsov covers implies tiling}.
\end{proof}

By inspecting the proofs just provided, we also obtain the following interesting corollary:

\begin{corollary}[$\sko\subsetneq \ML$]\label{cor: separation of skvortsov and medvedev}
    The logic $\sko$ is a proper sublogic of $\ML$.
\end{corollary}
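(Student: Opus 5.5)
We plan to separate the two logics with a single canonical formula, $\rho_{\W}\coloneqq\alpha(P_{\W},D_{\W})$, where $\W$ is a tile set that tiles the plane but admits no periodic tiling. By Berger's theorem such aperiodic sets exist, e.g.\ Robinson's tiles. For the parity-restricted variant of Remark \ref{rem: restriction}, we pass to a suitable refinement. Concretely, we take the four parity classes $\W_{i,j}$ to consist of copies of the aperiodic tiles that actually occur at positions of parity $(i,j)$ in some fixed tiling. We also enlarge colours, if necessary, so that parity is recorded in the tiles themselves. This keeps the set aperiodic. Because parity is just relabelling, any periodic tiling by the refined set would project to a periodic tiling by the original set.

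The first step is to show $\rho_{\W}\in\ML$. By Theorem \ref{thm: validity of canonical formula for Medvedev frames}, $\rho_{\W}\notin\ML$ is equivalent to $\mathrm{Cov}_{\ML}(P_{\W},\mathfrak{D}_{\W})$. By Corollary \ref{cor: Medvedev covers implies tiling}, that in turn would yield a periodic tiling by $\W$. Since $\W$ is aperiodic, no such tiling exists, so $\rho_{\W}\in\ML$.

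The second step is to show $\rho_{\W}\notin\sko$. Since $\W$ tiles the plane, Corollary \ref{cor: Tiling implies skortsov covers} gives $\mathrm{Cov}_{\sko}(P_{\W},\mathfrak{D}_{\W})$. We must also check that the surjectivity requirement of Remark \ref{rem: restriction} is met: by the choice of the $\W_{i,j}$, every tile in each class occurs at the corresponding parity. Theorem \ref{thm: validity of canonical formula for skortsov frames} then gives $\rho_{\W}\notin\sko$. Combined with the inclusion $\sko\subseteq\ML$, which follows from the definitions (each $M_n$ is some $M_I$), this yields $\sko\subsetneq\ML$.

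The main obstacle is the interface with Remark \ref{rem: restriction}. We need an aperiodic set that satisfies the surjective-parity constraint in a tiling of $\mathbb{N}\times\mathbb{N}$, which the infinite construction actually uses, rather than of $\mathbb{Z}\times\mathbb{Z}$. First we note that a tiling of the quarter plane by a finite set yields, by compactness, a tiling of the whole plane, and conversely. Then we pick the parity classes as the tiles appearing at each parity in a fixed tiling. The only remaining check is that the resulting restricted problem still admits no periodic solution, which holds because forgetting the parity labels sends any periodic restricted tiling to a periodic tiling by the original set.
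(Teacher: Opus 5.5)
Your proposal is correct and follows the paper's argument. You take a tile set that tiles the plane but admits no periodic tiling; Corollary~\ref{cor: Medvedev covers implies tiling} with Theorem~\ref{thm: validity of canonical formula for Medvedev frames} then gives $\alpha(P_{\W},\mathfrak{D}_{\W})\in\ML$, and Corollary~\ref{cor: Tiling implies skortsov covers} with Theorem~\ref{thm: validity of canonical formula for skortsov frames} gives $\alpha(P_{\W},\mathfrak{D}_{\W})\notin\sko$. You also refine the aperiodic set into parity classes read off from a fixed quarter-plane tiling, so that the surjectivity requirement of Remark~\ref{rem: restriction} holds while periodic tilings still project to periodic tilings of the original set; this fills in a detail the paper's proof passes over silently.
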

\begin{proof}
    Let $\W$ be a finite set of Wang tiles which is known to \emph{only tile the plane} aperiodically\footnote{For an example of such a tile, see \cite{Jeandel2021}, which also contains extensive discussion on such tile sets.}. By Corollary \ref{cor: Medvedev covers implies tiling}, we have that $\mathrm{Cov}_{\ML}(P_{\W},\mathfrak{D}_{\W})$ fails, and thus, by Thoerem \ref{thm: validity of canonical formula for Medvedev frames}, $\alpha(P_{\W},\mathfrak{D}_{\W})\in \ML$. On the other hand, by Corollary \ref{cor: skortsov covers implies tiling}, we have that $\mathrm{Cov}_{\sko}(P_{\W},\mathfrak{D}_{\W})$, which 
    by Theorem \ref{thm: validity of canonical formula for skortsov frames}, means that $\alpha(P_{\W},\mathfrak{D}_{\W})\notin \sko$.
\end{proof}

\section{Discussion and open problems}\label{sec: discussions and open problems}

The undecidability Theorems \ref{thm: medvedev's logic is undecidable} and \ref{thm: skortov is undecidable}, and the separation made in Corollary \ref{cor: separation of skvortsov and medvedev}, rather than closing the topic of decidability of these logics, present in our view, an attractive open area of research for a wide variety of logical questions. As discussed in Section \ref{sec: methodology and context}, we believe that a community wide effort, eventually with the use of tools such as large language models, can help in advancing our understanding of the fundamental questions at play.

\subsection{Superintuitionistic logics}

The negative answers to Problems \ref{prob:recursive axiomatizability}, \ref{prob: skortsov logic decidability},  \ref{prob: coincidence problem} and \ref{prob: decidability of modal medvedev}, substantial as they are, still leave many questions open about $\ML$ and its status amongst superintuitionistic logics. The first and foremost is Problem \ref{prob: yankov axiomatizability} of Yankov axiomatizability. Indeed, consider the logic
\begin{equation*}
    Y_{\mathsf{ML}}\coloneqq \mathsf{IPC}\oplus \{\mathcal{J}(P) : P\nVdash \mathsf{ML}\}.
\end{equation*}

Shehtman's question leads to wondering whether any of the following holds:

\begin{problem}[Axiomatizability of the Yankov logic of $\ML$]
    Is $Y_{\ML}$ equal to $\ML$? Is it is equal to $\sko$? Is it equal to $Y_{\sko}$? Is this logic decidable?
\end{problem}

Despite our efforts, both human and AI-assisted, we have not found a concrete way to separate any of these logics\footnote{See the Section \ref{sec: methodology and context} for a link to the research files on these investigations.}. We note that such a problem has a very natural combinatorial-geometric interpretation: if we see an arbitrary poset $P$ as an alexandroff space, the decidability question asks whether, given an arbitrary poset $P$, it is decidable whether a simplex has an open surjection onto $P$. Such an open surjection admits an obvious reading as a ``coloring", though the precise details need to be further explored.

A further refinement of this problem can be obtained by considering the notion of \emph{degree of FMP}:

\begin{definition}[Degree of FMP]
    A superintuitionistic logic $L$ is said to have \emph{degree of the finite model property $\kappa$} for $\kappa$ a cardinal if and only if the following set has cardinality $\kappa$\footnote{Note that it was proven in \cite[Thm.~3.8,Rem.~3.9]{cardinalitiesdegreeoffmp} that for the degree of FMP (and many other measures in lattices of varieties), the only possible infinite cardinals are $\aleph_{0}$ and $2^{\aleph_{0}}$.}:
    \begin{equation*}
        \mathrm{deg}_{\mathsf{FMP}}(L)\coloneqq |\{L_{0}\in \mathrm{Ext}(\mathsf{IPC}) : L_{0} \text{ has the same finite frames as } L\}|.
    \end{equation*}
\end{definition}

As shown in \cite[Thm.~3.11]{Bezhanishvili2025blok}, a logic has degree of FMP 1 if and only if it has the FMP and is Yankov axiomatizable. So one can more broadly ask:

\begin{problem}[Degree of FMP of $\ML$]
    Characterize the degree of FMP of $\ML$.
\end{problem}

In \cite{xiaogeneralizedmedvedev} a generalized class of Medvedev logics was introduced; this is obtained as follows: given a finite rooted poset $F$ with a top element, write $F^{n}$ for the poset obtained by the product $F\times\dots \times F$, $n$ many times, and let $F^{n-}$ be the same poset with the top point removed. Then the \emph{generalized Medvedev logic over $F$} $\mathsf{TLP}_{\mathbf{F}}$ is the logic
\begin{equation*}
    \mathsf{TLP}_{\mathbf{F}}\coloneqq \mathrm{Log}(\{F^{n-} : n\in \omega\}).
\end{equation*}

One can therefore ask:

\begin{problem}[Generalized Medvedev decidability]
    Does there exist a finite poset $F$ with a top element such that $\mathsf{TLP}_{\mathbf{F}}$ is decidable?
\end{problem}

Such a research can also assist in a deeper understanding of these and related logics (like the ``Strong-Union logic" introduced in \cite{Chen2026}), in the study of a related open question concerning Medvedev's logic:

\begin{problem}[Friedman uniqueness problem]
    Is $\ML$ the unique Friedman logic (both structurally complete and with the disjunction property)?
\end{problem}

On the side of more conceptual and vague questions, as mentioned in Section \ref{sec: undecidability skortsov}, the relationship between $\ML$ and $\sko$ appears to be mediated by the possibility to express periodicity phenomena, and periodicity in general. Indeed, the analysis by Ghilardi and Santocanale of periodicity in endomorphisms \cite{Ghilardi2020},revealed in particular \cite[Sec.~8]{Ghilardi2020} the existence of non-periodic endomorphisms, witnessing a failure of a generalization of Ruitenburg's theorem. Very recently, non-periodicity phenomena appeared to play a key role in the counterexample to the general Heyting-to-topos problem \cite{failureofhigherorder}. Understanding the connection, if any, between these phenomena, appears to us an important theoretical step forward.

\subsection{Schematic fragments and team semantics}\label{subsec:schematic}

Inquisitive semantics, developed by Ciardelli, Groenendijk, Roelofsen~\cite{Ciardelli2011-CIAIL,inqsembook,ciardellibook}, provides a formal framework for reasoning about entailment relations between assertions and questions alike. This is implemented by evaluating formulas with respect to {sets} $s$ of classical propositional valuations $v:\mathsf{Prop}\to \{0,1\}$, rather than single valuations $v$. The clauses for the propositional language $\{\bot,\land, \to, \vvvee\}$ run as follows:

\begin{align*}
        &s\vDash p && \textbf{iff} && \text{for all } v\in s, v(p)=1\\
        &s\vDash \bot && \textbf{iff} && s=\varnothing\\
        &s\vDash \varphi\land\psi && \textbf{iff} && s\vDash \varphi \text{ and } s\vDash \psi\\
        &s\vDash \varphi\vvvee\psi && \textbf{iff} && s\vDash \varphi \text{ or } s\vDash \psi\\
        &s\vDash \varphi\to\psi && \textbf{iff} && \text{for all } t\subseteq s, \text{ if }t\vDash \varphi\text{ then }t\vDash \psi.
\end{align*}
Negation $\neg$ is, as usual, defined as $\neg\varphi\mathrel{:=}\varphi\to\bot$, and $\top$ as $\bot\to\bot$. While the clause for $\vvvee$ mirrors the clause for disjunction $\vee$ in the relational semantics for intermediate logics, please take note that the symbol `$\vee$' is, within inquisitive semantics, reserved for the defined connective $\varphi\vee \psi\mathrel{:=}\neg (\neg \varphi \land \neg \psi)$ (this way, the $\vvvee$-free fragment, $\{\bot, \top, \land, \vee, \to\}$, can be shown to coincide with classical logic). 

We call a set of valuations $s$ an \textit{information state} or a \textit{team}, and say that it \textit{supports} a formula $\varphi$ if $s\vDash \varphi$. Validity and entailment, or consequence, is defined as preservation of support across all states. The resulting consequence relation is, like superintuitionistic logics, closed under modus ponens, but unlike superintuitionistic logics, is it not closed under uniform substitution. Indeed, it can be readily verified that 
\begin{align*}
    \neg\neg p\vDash p, \qquad \text{yet} \qquad \neg\neg(p\vvvee \neg p)\nvDash (p\vvvee \neg p).
\end{align*}
It is thus natural to ponder the \emph{schematic} fragment of inquisitive logic: the set of consequences, or formulas (inquisitive logic enjoys the deduction property), that are not only valid, but whose substitution instances are all valid, i.e. the set 
\[
    \mathsf{Sch}(\mathsf{Inq})\mathrel{:=}\{\varphi\mid \text{for all substitutions $\sigma{:}\; \vDash \sigma(\varphi)$}\}.
\]
As mentioned in the introduction, this coincides precisely with Medvedev's logic~\cite{Ciardelli2011-CIAIL}. The undecidability of $\ML$ thereby yields a stark contrast with the decidability of inquisitive logic (i.e. of validity alone, contrary to schematic validity). The decidability of inquisitive validity follows by an entirely elementary proof: to check that a formula $\varphi$ of inquisitive logic is valid, it suffices to check whether all states $s\subseteq \{v:\mathsf{Prop}(\varphi)\to\{0,1\}\}$ support $\varphi$. Here, $\mathsf{Prop}(\varphi)$ denotes the set of propositional letters occurring in $\varphi$. This is a finite set, whence there are only finitely many classical valuations with domain $\mathsf{Prop}(\varphi)$, and therefore only finitely many subsets of such, and so decidability follows.

This decidability argument generalizes to a whole range of kindred logical systems, going under the collective name of propositional (or, for that matter, modal) \textit{team logics}; for a small sample, see~\cite{yangvaananen2016,yangvaananen2017}. The undecidability of Medvedev's logic therefore opens an unprecedented research programme within team semantics: what otherwise seemed like a remarkably robust family of logics, is carrying features of high complexity within. Obvious, and exciting, questions involve understanding this much, much better. As an immediate next step, one may seek to adapt the present proof to the most closely related logics, such as propositional dependence logic or convex inquisitive logic~\cite{yangvaananen2016,AnttilaKnudstorp}.

\subsection{Related logics}

Finally we turn to several logics which are close in spirit or in shape to $\ML$, and for which the questions of their decidability has been open for as long, or longer, than that of $\ML$.

As noted before, the logic $\mathsf{Cheq}$ was introduced to analyze the study of \emph{chequered subsets} of the real numbers, a concept arising in the modal logical analysis of space. It is defined as follows: let $\mathcal{F}$ be the 2-fork poset, which we label as $\mathcal{F}_{0}$. We define by recursion $\mathcal{F}_{n+1}\coloneqq \mathcal{F}_{n}\times \mathcal{F}$ (see Figure \ref{fig:framesF0andF1}).

\begin{figure}[h]
    \centering
\begin{tikzpicture}
    \node at (0,0) {$\bullet$};
    \node at (-1,1) {$\bullet$};
    \node at (1,1) {$\bullet$};
    \node at (0,-1) {$\mathcal{F}_{0}$};

\draw (0,0) -- (-1,1) -- (0,0) -- (1,1);

\end{tikzpicture}
\qquad
\begin{tikzpicture}
\node at (0,0) {$\bullet$};

    \node at (-0.5,1) {$\bullet$};
    \node at (-1.5,1) {$\bullet$};

    \node at (0.5,1) {$\bullet$};
        \node at (1.5,1) {$\bullet$};

    \node at (-0.5,2) {$\bullet$};
    \node at (-1.5,2) {$\bullet$};

    \node at (0.5,2) {$\bullet$};
    \node at (1.5,2) {$\bullet$};

    \draw (0,0) -- (-0.5,1) -- (-1.5,2) -- (-1.5,1) -- (-0.5,2) -- (0.5,1) -- (1.5,2) -- (1.5,1) -- (0.5,2) -- (-0.5,1) -- (0,0) -- (-1.5,1) -- (0,0) -- (0.5,1) -- (0,0) -- (1.5,1);

    \node at (0,-1) {$\mathcal{F}_{1}$};

\end{tikzpicture}    \caption{Frames $\mathcal{F}_{0}$ and $\mathcal{F}_{1}$}
    \label{fig:framesF0andF1}
\end{figure}

We then define:
\begin{equation*}
    \mathsf{Cheq}\coloneqq \mathrm{Log}(\{\mathcal{F}_{n} : n\in \omega\}).
\end{equation*}

Given how semantically similar this logic is to $\ML$, the following problem might be solved by adapting the techniques of the present paper:

\begin{problem}[$\mathsf{Cheq}$ recursively axiomatizable]
    Is $\mathsf{Cheq}$ recursively axiomatizable? Equivalently, is $\mathsf{Cheq}$ decidable?
\end{problem}

An even older\footnote{We note that Medvedev \cite{medvedevfiniteproblems} was aware of this work, and reacting in part to the work of Rose \cite[Thm.~6.1]{roserealizability} who had shown that $\mathsf{IPC}$ was not complete for this interpretation.}
relative of $\ML$ is the \emph{realizability logic} of Kleene \cite{Kleene1945-KLEOTI}. Crucial to this is the relation $n\mathbf{r}\lambda$, which expresses that $n$ \emph{realizes} the sentence $\lambda$ (in an informal sense, that $n$ encodes this sentence). We say that a Heyting arithmetic sentence $\lambda$ is realizable if $n\mathbf{r}\lambda$ is true for some $n$.

Given a propositional formula $\phi(p_{1},\dots,p_{n})$, and arithmetical formulas $\lambda_{1},\dots,\lambda_{n}$ -- that is, formulas in the language of first-order Heyting arithmetic -- we will say that $\phi(\lambda_{1},\dots,\lambda_{n})$ is an \emph{arithmetical instance} (or arithmetical realization\footnote{The reader should note that this is wholly analogous to the concept used in the provability logic literature of a ``realizer", see \cite{Boolos1993-BOOTLO-7} for a reference}) of $\phi$. We let $\mathsf{R}(\phi)$ be the set of the arithmetical realizations of $\phi$. Then we define:
\begin{equation*}
    \mathsf{Realiz}\coloneqq \{\phi\in \mathcal{L}_{\mathsf{IPC}} : \forall \lambda\in \mathsf{R}(\phi), \lambda \text{ is realizable }\}.
\end{equation*}

A survey of propositional realizability logic \cite{Plisko2009} noted that many basic facts concerning this logic, and several of its close relatives, remain open. 

\begin{problem}[Basic meta-logic of realizability logic]\label{prob: realizability logic}
    Is the logic $\mathsf{Realiz}$ decidable? Does it have the finite model property?
\end{problem}

Such a question has been noted as being wholly open in \cite{deJongh2010}. This can likewise be phrased in topos theoretic terms: recall the effective topos\cite{HYLAND1982165}  $\eff$ which serves as a model for Kleene (number) realizability. Then Problem \ref{prob: realizability logic} is equivalent to asking for the internal logic of the topos $\eff$.

\section{Methodology and Context}\label{sec: methodology and context}

In this section we discuss the process of obtaining the present results in full detail.

\subsection{Usage of large language models}

The following section outlines the usage of large language models in this paper. 

\subsubsection{Data collection and presentation}

As outlined in the abstract and introduction, the key idea for the undecidability of $\ML$ -- namely the construction of the tiling posets $P_{\W}$, and the encoding of the reduction -- was obtained by prompting ChatGPT Sol 5.6. The present text was written by the authors to try to communicate the key ideas, and presents, in our view, a more polished and transparent version of this proof. In short, whilst the authors assume \emph{responsibility} for the correctness of the results, they do not take any credit in such ideas.

To that point, we note that with the exception of the notation from Definition \ref{def: tiling domain}, which was for convenience lifted from the preliminary document, the text contained in the present paper was \textbf{written entirely by the authors, and represents faithfully our understanding of the main mathematical results}\footnote{The reader who checks the dates and timing of the prompts will hopefully apologize all of the typos which inevitably must have been introduced by the proof writing. In a subsequent version, we hope to correct such typos. Indeed, the urgency in writing was not quick enough to avoid the main result being announced earlier.}. These were checked carefully by both authors, who retain full responsibility for the correctness and presentation of the results, in accordance with the Leiden declaration. Usage of Opus 5 was made to proofread the text for typos and to improve the prose, though all changes were implemented manually by the authors.

In light of these facts, in the interest of transparency, and to aid the community in evaluating the present paper, and eventually obtaining similar results, we have made all documents involved in this process fully available -- all prompts, all preliminary documents and verification ledgers produced by the model during rounds, and a Lean verification of the main central argument, produced by Claude Opus 5\footnote{With a view to the integrity of the academic process, we have not modified any such document. As such, the first-named author registers his embarassment, both at the fact that the prompts contain some mis-recollections of facts in the literature, and that some of the writings in the preliminary documents reflect the personal relationship between the authors.} -- they can be found at the first-named author's personal website, at \href{http://rodrigonalmeida.github.io}{this link}. In the absence of standardized procedures, we believe that documents should be treated analogously to datasets in other areas, and thus, should be subject to similar policies of disclosure, reproducibility, and be cited just like any other documents. At the same time, we also believe that the production and distribution of such datasets for public usage should be encouraged, especially in view of the potential of LLMs to accentuate inequalities in access to research tools.

\subsubsection{Prompt contents}

The idea to prompt ChatGPT for a solution of Medvedev's logic arose in discussion between the authors, in light of the recent solution of another longstanding open problem \cite{failureofhigherorder}, on which the first-named author had been actively working, which was obtained with AI assistance.

The second-named author, having worked throughout his PhD work on undecidability, spent time thinking about the decidability of Medvedev's logic, grew suspicious that a well-targeted search might settle the problem in the positive. At his behest, the first-named author constructed the following prompting session: 
\begin{enumerate}
    \item The model was asked to reconstruct Zakharyaschev's canonical formula criterion for axiomatization, and to present a small simplification guaranteed by the structural completeness of $\ML$;
    \item After the model collected such results, a second prompt was engineered to try to obtain a proof of decidability or undecidability: a wide-search cross-referencing results from the literature on graph topology, combinatorics, and Constraint Satisfaction Problems, was organized in a ``positive team" whose task was to provide a decidability proof for the desired criterion. In parallel, a ``negative team" was tasked with using similar techniques to those from \cite{knudstorpundecidabilityrelevant} to obtain a tiling encoding. The prompt organizes the main workflow between agents, and contains some prompt engineering aimed to the goal of maximizing the chances the model returned a useful criterion.
\end{enumerate}

After running for 45 minutes, the model returned with an undecidability result, which it was asked to check and transcribe into a latexed pdf file. This was surprising to the authors on several grounds. One of the most remarkable was the importance of Zakharyaschev's canonical formulas: despite several attempts by the authors to fix such a construction, it quickly became clear that using the simpler \emph{Yankov formulas} in fact made the problem a lot harder\footnote{The difficulty being that such formulas force the existence of a \emph{total} function.}. Despite several attempts, using the theory of cofinitely generated Esakia spaces, several tools developed by Shehtman, and AI assistance, no result was obtained either separating or showing the decidability of the Yankov fragment of $\ML$.

Upon obtaining the undecidability proof, the second-named author asked whether a similar undecidability result could be obtained for Skvortsov's logic. The second document produced in this thread contains a summary of these results. We found the proof of it, however, to be rather immediate from the construction for $\ML$, and so the proof provided does not use anything from this latter document.

The authors also sought to prompt Chat GPT Sol 5.6 to answer Problem \ref{prob: yankov axiomatizability}. Here the model produced vastly less decisive results, despite additional care being placed in prompt engineering\footnote{The outcomes of such questions are likewise publicly available for use by the community.}.

\subsection{Prompt engineering and heuristics}

It is our belief that systematic empirical research is necessary to determine the precise conditions involved in successful prompt engineering for the solution of problems as here. Some general heuristics that we expect can be helpful are recorded here:
\begin{enumerate}
    \item \textbf{Workflow management and prompt engineering}: Organized research worfklows, with careful division of tasks, and a well developed bookkeeping, appear to be more economic (in terms of token usage) and more effective in obtaining useful reports from the agents involved in the research. As more analysis comes out about coordinating such swarms of agents, better methods can be developed to optimize, economically and cognitively, the task of obtaining solutions and processing them efficiently.
    \item \textbf{Search-space management} The effectiveness of solutions seems to be most closely related to the search-space associated to the problem. Retrospectively it is clear that the search space for decidability of Medvedev's logic was relatively small: the gap in kinds of techniques and approaches involved in a hypothetical positive solution, and the those used for the present negative solution, was not very large, and the large literature on Medvedev's logic meant that the model had a vast library from which to draw directly from its training. In other problems where the search space is much larger, additional human insight is necessary to circumscribe the precise techniques at play -- to wit, the request to prove the undecidability of Skvortsov's logic was very quickly executed.
    \item \textbf{Attributions and reconstructions}: in our experience, long-running frontier models are exceptionally good at tracking even vague allusions to connections between topics. A concrete example arose in the second preliminary document, where connections with the theory of simplicial sets were discussed. Though the model reproduced verbatim some of the results from \cite{bevilacqua2025medvedev} and \cite{jerabekinternal}, it did not credit the first, and only in passing mentioned the second files. This is to our view an unavoidable feature of the current LLM architecture: deep chains of tought and reasoning lead to models with a very poor capacity for self-recollection.
\end{enumerate}

\subsection{Community consultation}

On par with our belief that explicit methodological questions and a suitable data policy (analogous to many other fields of research), is crucial for the safe usage of LLM tools, we also believe that the use of such tools pose far-reaching questions which will need a deep reflection in the ongoing years. 

It is our belief, for the purpose of this paper, that the materials obtained and made publicly available should be the purview of all researchers interested in the logic at hand. Moreover, we believe that whilst we have striven to provide an interpretation which we hope communicates the deep theoretical insights obtained by the model, we believe the many deep questions raised in Section \ref{sec: discussions and open problems}, as well as many others the reader may be familiar with, provide the backbone of this. As such, we intend to maintain this paper in the status of a working paper, as a deeper understanding of the situation emerges, and will not submit it to a journal for publication until such a deep understanding is achieved. This reflects in our view, drawn in part from considerations laid out by Tao \cite{taomathematicsageofai}, that one of the the fundamental tasks of mathematicians, when entrusted with an AI generated result, is to slowly and deliberately digest it until the key insights have been revealed.

Our hopes are thus manifold. Locally, we hope that the interested reader will assist us in this process of unveiling the key mathematical insights at play in all of this. More broadly, we hope that the methodological and ethical outlines presented here can serve as a general basis, out of which journals, conferences and other venues in our field can develop effective criteria for judging transparency, ethics and quality of contributions. Finally, it is our generic hope that research in mathematical logic will continue to strive for a level of rigor and clarity which ensures that all contributions advance human understanding as much as possible, regardless of the specific methods used to achieve them.

\subsubsection*{Acknowledgments} 

We are grateful to Nick Bezhanishvili for comments and valuable discussions concerning the nature and presentation of the work outlined above.

\printbibliography[
    title={Bibliography}
]

\vspace{5mm}

\textbf{Rodrigo Nicolau Almeida}\\
Institute for Logic, Language and Computation\\ University of Amsterdam\\
Amsterdam, 1098XH\\
The Netherlands\\
r.dacruzsilvapinadealmeida@uva.nl

\vspace{3mm}

\textbf{Søren Brinck Knudstorp}\\
Institute for Logic, Language and Computation\\ University of Amsterdam\\
Amsterdam, 1098XH\\
The Netherlands\\
s.b.knudstorp@uva.nl

\end{document}